\documentclass[12pt,a4paper,leqno]{amsart}

\usepackage[latin1]{inputenc}
\usepackage[T1]{fontenc}
\usepackage{amsfonts}
\usepackage{amsmath}
\usepackage{amssymb}
\usepackage{eurosym}
\usepackage{mathrsfs}
\usepackage{palatino}
\usepackage{color}
\usepackage{esint}
\usepackage{url}

\newcommand{\R}{\mathbb{R}}
\newcommand{\C}{\mathbb{C}}

\newcommand{\N}{\mathbb{N}}

\newcommand{\Z}{\mathbb{Z}}

\numberwithin{equation}{section}

%% Operations of analysis %%%%%%%%%%%%%%%%%%%%%%%
\newcommand{\ud}[0]{\,\mathrm{d}}

%% Norms %%%%%%%%%%%%%%%%%%%%%%%%%%%%%%%%%%%%%%%%
\newcommand{\abs}[1]{|#1|}

\newcommand{\Norm}[2]{\|#1\|_{#2}}

\newcommand{\pair}[2]{\langle #1,#2 \rangle}

\newcommand{\ave}[1]{\langle #1\rangle}

%% Topology and linear operators %%%%%%%%%%%%%%%

\newcommand{\bddlin}[0]{\mathcal{L}}

\newcommand{\BMO}[0]{\operatorname{BMO}}
\newcommand{\supp}[0]{\operatorname{spt}}
%{\mathcal{C}ar}

%% Numerical operations %%%%%%%%%%%%%%%%%%%%%%%%%%

%% Probability and R-boundedness %%%%%%%%%%%%%%%%%%

%% Dyadic cubes %%%%%%%%%%%%%%%%%%%%%%%%%%%%%%%%

\swapnumbers
\theoremstyle{plain}
\newtheorem{thm}[equation]{Theorem}
\newtheorem{lem}[equation]{Lemma}
\newtheorem{prop}[equation]{Proposition}

\theoremstyle{definition}
\newtheorem{defn}[equation]{Definition}

\newtheorem{ass}[equation]{Assumption}

\theoremstyle{remark}
\newtheorem{rem}[equation]{Remark}

\pagestyle{headings}

\addtolength{\hoffset}{-1.15cm}
\addtolength{\textwidth}{2.3cm}
\addtolength{\voffset}{0.45cm}
\addtolength{\textheight}{-0.9cm}

\setcounter{tocdepth}{1}

\title{Non-homogeneous $T1$ theorem for bi-parameter singular integrals}

\author{Tuomas Hyt\"onen}
\address{Department of Mathematics and Statistics, University of Helsinki, P.O.B. 68, FI-00014 Helsinki, Finland}
\email{tuomas.hytonen@helsinki.fi}
\thanks{T.H. is supported by the European Union through the ERC Starting Grant Analytic-probabilistic methods for borderline singular integrals, and by the Academy of Finland, grants 130166 and 133264.
H.M. is supported by the Emil Aaltonen Foundation, and by the Academy of Finland, grants 130166 and 133264. This work was started when H.M. was still at the University of Helsinki.}

\author{Henri Martikainen}
\address{D\'epartement de Math\'ematiques, B\^atiment 425, Facult\'e des Sciences d'Orsay, Universit\'e Paris-Sud 11, F-91405 Orsay Cedex}
\email{henri.martikainen@math.u-psud.fr}

\makeatletter
\@namedef{subjclassname@2010}{%
  \textup{2010} Mathematics Subject Classification}
\makeatother

\subjclass[2010]{42B20}
\keywords{Bi-parameter singular integral, non-homogeneous analysis}

\thispagestyle{empty}
\begin{document}
\maketitle

\begin{abstract}
We prove a non-homogeneous $T1$ theorem for certain bi-parameter singular integral operators. Moreover, we discuss the related non-homogeneous Journ\'e's lemma and product BMO theory.
\end{abstract}

\tableofcontents

\section{Introduction}

This paper deals with singular integrals with two attributes: `bi-parameter' and `non-homogeneous'. Both of them have been investigated before but, as far as we know, only one at a time. Here, for the first time, we work in the simultaneous presence of both complications.

To be more precise, we study bi-parameter singular integrals $T$ acting on some class of functions with product domain $\R^{n+m} = \R^n \times \R^m$.
However, instead of the Lebesgue measure we equip $\R^{n+m}$ with a product measure $\mu = \mu_n \times \mu_m$, where the measures $\mu_n$ and $\mu_m$ are only assumed to be upper doubling (a condition that is more general than the assumption $\mu_n(B(x,r)) \le Cr^s$, $\mu_m(B(x,r)) \le Cr^t$). We establish a $T1$ theorem, i.e. a boundedness criterion for singular integral operators, in this setting.

Such a result touches on quite a few topics. We now lay down some of the context. After the classical $T1$ and $Tb$ type theory by David and Journ\'e \cite{DJ} and
David, Journ\'e and Semmes \cite{DJS}, the first $T1$ type theorem for product spaces was proved by Journ\'e \cite{Jo2}. Journ\'e formulated the assumptions in the language of vector-valued Calder\'on--Zygmund theory. Recently, Journ\'e's result was challenged by a new product-space $T1$ theorem by S. Pott and P. Villarroya \cite{PV}, who introduced an alternative framework avoiding the vector-valued assumptions.  Following a similar philosophy, the latter author proved a bi-parameter analog \cite{Ma1} of the representation theorem of the first author \cite{Hy2} -- a particular consequence of which is yet another bi-parameter $T1$ theorem of a more dyadic flavour than \cite{PV}.

All the classical multiparameter methods including Journ\'e's covering lemma \cite{Jo1} and the product BMO, Hardy space and multiparameter singular integral theory by 
Chang and Fefferman \cite{CF1}, \cite{CF2}, Fefferman \cite{Fe} and Fefferman and Stein \cite{FS} are in the doubling situation. The same is true for the results
of the previous paragraph, and for the other related modern developments like \cite{CLMP}, \cite{LM}, \cite{PW} and \cite{Tr}. These deal with
variations of Journ\'e's covering lemma, multiparameter paraproducts and dyadic versions of multiparameter function spaces. Since all the relevant results and definitions, both classical and modern, are in the doubling situation,
we have to consider Journ\'e's lemma, (dyadic) product BMO, $H^1$-BMO type duality results, singular integrals and multiparameter paraproducts all from the new perspective of general measures.

The following explains the specific need for these types of results.
Even in the one-parameter theory of non-homogeneous singular integrals, one needs to replace the familiar BMO condition by
\begin{displaymath}
\int_I |f-f_I|^p\,d\mu \le L\mu(\kappa I),
\end{displaymath}
where a parameter $\kappa>1$ specifies an expansion $\kappa I$ of the cube $I$; this can have a much larger measure than $I$ when $\mu$ is non-doubling.
Accordingly, we need to define and study a new space BMO$_{\textup{prod}}(\mu)$, which is used in the formulation of our main theorem.
To show that the new condition $T1 \in \BMO_{\textup{prod}}(\mu)$ is necessary, we need a version of Journ\'e's covering lemma for general product measures.
Finally, to handle some mixed paraproducts, we need a certain new $H^1$--BMO type duality inequality.

Our basic proof strategy of the $T1$ theorem uses dyadic probabilistic techniques adapted to the bi-parameter situation. These were already used by the latter author in \cite{Ma1} but, again, only in the doubling case.
The dyadic and probabilistic methods of non-homogeneous analysis were pioneered by Nazarov, Treil and Volberg (see e.g. the non-homogeneous $Tb$ theorems \cite{NTVa} and \cite{NTV}).
However, these powerful tools are not widely used in multiparameter harmonic analysis, and some extra care is needed. In this regard the current paper is a continuation of the recent developments (see e.g.
\cite{Hy1}, \cite{Hy2}, \cite{Hy3}, \cite{HM}, \cite{Ma1}, \cite{Ma2}) in the probabilistic methods. 

The proof starts by expanding in a product Haar basis adapted to the general measures $\mu_n$ and $\mu_m$. The summation in the bi-parameter case is rather massive
and the non-homogeneous techniques are needed in various parts of the summation. All kinds of bad-boundary terms appear -- also in some new mixed situations. Moreover,
there is a wide variety of new non-homogeneous paraproducts, some of them related to the new space BMO$_{\textup{prod}}(\mu)$.

Just like in \cite{Ma1} our operators are defined using the philosophy of Pott and Villarroya
\cite{PV}. That is, we mostly avoid the language of vector-valued formulations used in the original work of Journ\'e \cite{Jo2}. Of course, our kernel estimates are tied to the non-homogeneous measures. 
\section{The main theorem} 

We use this section to introduce the (somewhat lengthy) framework necessary for the formulation of our main theorem. We consider the following class of measures:

\begin{defn}[Upper doubling measures]
Let $\lambda\colon \R^n \times (0,\infty) \to (0,\infty)$ be a function so that $r \mapsto \lambda(x,r)$ is non-decreasing and $\lambda(x, 2r) \le C_{\lambda}\lambda(x,r)$ for
all $x \in \R^n$ and $r > 0$. We say that a Borel measure $\mu$ in $\R^n$ is upper doubling with the dominating function $\lambda$, if $\mu(B(x,r)) \le
\lambda(x,r)$ for all $x \in \R^n$ and $r > 0$. We set $d_{\lambda} = \log_2 C_{\lambda}$.
\end{defn}

The property $\lambda(x, |x-y|) \sim \lambda(y, |x-y|)$ would be convenient. This is luckily something that can be arranged for free.
In \cite[Proposition 1.1]{HYY} it is shown that
$\Lambda(x,r) := \inf_{z \in \R^n} \lambda(z, r + |x-z|)$ satisfies that $r \mapsto \Lambda(x,r)$ is non-decreasing, $\Lambda(x,2r) \le C_{\lambda}\Lambda(x,r)$,
$\mu(B(x,r)) \le \Lambda(x,r)$, $\Lambda(x,r) \le \lambda(x,r)$ and $\Lambda(x,r) \le C_{\lambda}\Lambda(y,r)$ if $|x-y| \le r$. Therefore, we may (and do) always assume that dominating functions $\lambda$ satisfy the additional symmetry property
$\lambda(x,r) \le C\lambda(y,r)$ if $|x-y| \le r$.

%\subsection{Maximal functions}
%We use the dyadic strong maximal function
%\begin{equation*}
%  M_{\mathcal{D}}f(x) =\sup_{R\ni x}\fint_R\abs{f}\ud\mu,\qquad R = I \times J \in \mathcal{D} = \mathcal{D}_n \times \mathcal{D}_m.
%\end{equation*}
%We also use the dyadic maximal functions on $\R^n$ and $\R^m$ separately:
%\begin{displaymath}
%M_{\mathcal{D}_n}f(x) =\sup_{I \ni x}\fint_I\abs{f}\ud\mu_n,\qquad I \in \mathcal{D}_n.
%\end{displaymath}
%The subscripts are usually used, but sometimes we just use $M$ spelling out which maximal function is in question.

Henceforth, let $\mu = \mu_n \times \mu_m$, where $\mu_n$ and $\mu_m$ are upper doubling measures on $\R^n$ and $\R^m$ respectively. The corresponding dominating functions
are denoted by $\lambda_n$ and $\lambda_m$. We use $\ell^{\infty}$ metrics on $\R^n$ and $\R^m$.

\subsection{Bi-parameter Calder\'on--Zygmund  operators}

We study an a priori bounded linear operator $T\colon L^2(\mu) \to L^2(\mu)$.
In addition to the usual adjoint $T^*$, we will be concerned with the \emph{partial adjoint}  $T_1$ defined by
\begin{displaymath}
  \langle T_1(f_1 \otimes f_2), g_1 \otimes g_2 \rangle = \langle T(g_1 \otimes f_2), f_1 \otimes g_2\rangle.
\end{displaymath}
The philosophy of the following assumptions is that we impose symmetric conditions on all the four operators $T$, $T^*$, $T_1$ and $T_1^*$.

\begin{ass}[Full kernel representation]
If $f = f_1 \otimes f_2$ and $g= g_1 \otimes g_2$ with $f_1, g_1 \colon \R^n \to \C$, $f_2, g_2 \colon \R^m \to \C$, $\textrm{spt}\,f_1\cap \textrm{spt}\,g_1 = \emptyset$ and $ \textrm{spt}\,f_2 \cap  \textrm{spt}\,g_2 = \emptyset$,
we have the kernel representation
\begin{equation*}
\begin{split}
  \langle Tf, g\rangle &= \int_{\R^{n+m}}\int_{\R^{n+m}} K(x,y)f(y)g(x)\,d\mu(x)\,d\mu(y) \\
   &=\iint_{\R^n\times\R^m}\iint_{\R^n\times\R^m} K(x_1,x_2;y_1,y_2)f(y_1,y_2)g(x_1,x_2)\,d\mu(x_1,x_2)\,d\mu(y_1,y_2),
\end{split}
\end{equation*}
\begin{displaymath}
\end{displaymath}
where the kernel $K$ is a function
\begin{equation*}
  K\colon (\R^{n+m} \times \R^{n+m}) \setminus \{(x,y) \in \R^{n+m}  \times \R^{n+m}:\, x_1 = y_1 \textrm{ or } x_2 = y_2\} \to \C.
\end{equation*}
\end{ass}

Note that this implies kernel representations for $T^*$, $T_1$ and $T_1^*$. If we denote their kernels respectively by $K^*$, $K_1$ and $K_1^*$, then we immediately observe the formulae
\begin{equation*}
\begin{split}
  K^*(x,y) &= K(y_1,y_2;x_1,x_2), \\
  K_1(x,y) &= K(y_1,x_2;x_1,y_2), \\
  K_1^*(x,y) &= K(x_1,y_2;y_1,x_2).
\end{split}
\end{equation*}

\begin{ass}[Full standard estimates]
The kernel is assumed to satisfy the \emph{size condition}
\begin{displaymath}
|K(x,y)| \le C\frac{1}{\lambda_n(x_1, |x_1-y_1|)}\frac{1}{\lambda_m(x_2, |x_2-y_2|)},
\end{displaymath}
the \emph{H\"older condition}
\begin{align*}
|K(x,y) - K(x, &(y_1, y_2')) - K(x, (y_1', y_2)) + K(x, y')| \\
&\le  C \frac{|y_1-y_1'|^{\alpha}}{|x_1-y_1|^{\alpha}\lambda_n(x_1, |x_1-y_1|)}  \frac{|y_2-y_2'|^{\beta}}{|x_2-y_2|^{\beta}\lambda_m(x_2, |x_2-y_2|)} 
\end{align*}
whenever $|y_1-y_1'| \le |x_1-y_1|/2$ and $|y_2-y_2'| \le |x_2-y_2|/2$,
%\begin{align*}
%|K(x,y) - K((x_1, x_2'), &y) - K((x_1', x_2), y) + K(x', y)| \\
%&\le C\frac{|x_1-x_1'|^{\alpha}}{|x_1-y_1|^{\alpha}\lambda_n(x_1, |x_1-y_1|)}  \frac{|x_2-x_2'|^{\beta}}{|x_2-y_2|^{\beta}\lambda_m(x_2, |x_2-y_2|)} 
%\end{align*}
%whenever $|x_1 - x_1'| \le |x_1-y_1|/2$ and $|x_2-x_2'| \le |x_2-y_2|/2$,
%\begin{align*}
%|K(x,y) - K((x_1,x_2'), &y) - K(x, (y_1', y_2)) + K((x_1,x_2'), (y_1', y_2))| \\
%&\le  C \frac{|y_1-y_1'|^{\alpha}}{|x_1-y_1|^{\alpha}\lambda_n(x_1, |x_1-y_1|)}  \frac{|x_2-x_2'|^{\beta}}{|x_2-y_2|^{\beta}\lambda_m(x_2, |x_2-y_2|)} 
%\end{align*}
%whenever $|y_1-y_1'| \le |x_1-y_1|/2$ and $|x_2-x_2'| \le |x_2-y_2|/2$, and
%\begin{align*}
%|K(x,y) - K(x, &(y_1,y_2')) - K((x_1', x_2), y) + K((x_1',x_2), (y_1, y_2'))| \\
%&\le  C \frac{|x_1-x_1'|^{\alpha}}{|x_1-y_1|^{\alpha}\lambda_n(x_1, |x_1-y_1|)}  \frac{|y_2-y_2'|^{\beta}}{|x_2-y_2|^{\beta}\lambda_m(x_2, |x_2-y_2|)} 
%\end{align*}
%whenever $|x_1 - x_1'| \le |x_1-y_1|/2$ and $|y_2-y_2'| \le |x_2-y_2|/2$.
%
%Furthermore, we assume
and the \emph{mixed H\"older and size condition}
%\begin{align*}
%|K(x,y) - K((x_1', x_2), y)| \le C\frac{|x_1-x_1'|^{\alpha}}{|x_1-y_1|^{\alpha}\lambda_n(x_1, |x_1-y_1|)}  \frac{1}{\lambda_m(x_2, |x_2-y_2|)}
%\end{align*}
%whenever $|x_1-x_1'| \le |x_1-y_1|/2$,
\begin{align*}
|K(x,y) - K(x, (y_1', y_2))| \le C \frac{|y_1-y_1'|^{\alpha}}{|x_1-y_1|^{\alpha}\lambda_n(x_1, |x_1-y_1|)} \frac{1}{\lambda_m(x_2, |x_2-y_2|)}
\end{align*}
whenever $|y_1-y_1'| \le |x_1-y_1|/2$.
%\begin{align*}
%|K(x,y) - K((x_1, x_2'), y)| \le C\frac{1}{\lambda_n(x_1, |x_1-y_1|)} \frac{|x_2-x_2'|^{\beta}}{|x_2-y_2|^{\beta}\lambda_m(x_2, |x_2-y_2|)} 
%\end{align*}
%whenever $|x_2-x_2'| \le |x_2-y_2|/2$, and
%\begin{align*}
%|K(x,y) - K(x, (y_1,y_2'))| \le C\frac{1}{\lambda_n(x_1, |x_1-y_1|)} \frac{|y_2-y_2'|^{\beta}}{|x_2-y_2|^{\beta}\lambda_m(x_2, |x_2-y_2|)}
%\end{align*}
%whenever $|y_2-y_2'| \le |x_2-y_2|/2$.
The same conditions are imposed on $K^*$, $K_1$ and $K_1^*$ as well.
\end{ass}

We need to assume some Calder\'on--Zygmund structure on $\R^n$ and $\R^m$ separately.

\begin{ass}[Partial kernel representation]
If $f = f_1 \otimes f_2$ and $g = g_1 \otimes g_2$ with $\textrm{spt}\,f_1\cap \textrm{spt}\,g_1 = \emptyset$, then we assume the kernel representation
\begin{displaymath}
\langle Tf, g\rangle = \int_{\R^n} \int_{\R^n} K_{f_2, g_2}(x_1,y_1)f_1(y_1)g_1(x_1)\,d\mu_n(x_1)\,d\mu_n(y_1).
\end{displaymath}
\end{ass}

\begin{ass}[Partial boundedness $\times$ standard estimates]
The kernel
\begin{equation*}
  K_{f_2, g_2} \colon (\R^n \times \R^n) \setminus \{(x_1, y_1) \in \R^n \times \R^n:\, x_1 = y_1\}
\end{equation*}
is assumed to satisfy the size condition
\begin{displaymath}
|K_{f_2,g_2}(x_1,y_1)| \le C(f_2,g_2)\frac{1}{\lambda_n(x_1, |x_1-y_1|)}
\end{displaymath}
and the H\"older conditions
\begin{displaymath}
|K_{f_2,g_2}(x_1,y_1) - K_{f_2,g_2}(x_1',y_1)| \le C(f_2,g_2)\frac{|x_1-x_1'|^{\alpha}}{|x_1-y_1|^{\alpha}\lambda_n(x_1, |x_1-y_1|)}
\end{displaymath}
whenever $|x_1 - x_1'| \le |x_1-y_1|/2$, and
\begin{displaymath}
|K_{f_2,g_2}(x_1,y_1) - K_{f_2,g_2}(x_1,y_1')| \le C(f_2,g_2)\frac{|y_1-y_1'|^{\alpha}}{|x_1-y_1|^{\alpha}\lambda_n(x_1, |x_1-y_1|)}
\end{displaymath}
whenever $|y_1 - y_1'| \le |x_1-y_1|/2$.

We assume that $C(f_2, g_2) \lesssim \|f_2\|_{L^2(\mu_m)} \|g_2\|_{L^2(\mu_m)}$.
\end{ass}

We assume the analogous representation and properties with a kernel
$K_{f_1, g_1}$ in the case spt$\,f_2 \cap \textrm{spt}\, g_2 = \emptyset$.

\begin{rem}
As is clear, the final bound is not allowed to depend on $\|T\|_{L^2(\mu) \to L^2(\mu)}$.
However, note that we do assume some type of $L^2$ boundedness separately on $(\R^n, \mu_n)$ and $(\R^m, \mu_m)$ (the bounds for $C(f_1, g_1)$ and $C(f_2, g_2)$).
This is in slight contrast with some of the most recent works in the homogeneous bi-parameter setting \cite{Ma1}. There one may work with a bit weaker testing conditions for $C(\cdot, \cdot)$.
However, this seems to be a standard assumption in the classical homogeneous works, and we also require it in our non-homogeneous setting.
\end{rem}

\begin{ass}[Full weak boundedness property]
We assume the following weak boundedness property: for every cube $I \subset \R^n$ and $J \subset \R^m$ there holds that
\begin{displaymath}
|\langle T(\chi_I \otimes \chi_J), \chi_I \otimes \chi_J\rangle| \le C\mu_n(5I)\mu_m(5J).
\end{displaymath}
\end{ass}

Note that this condition is invariant with respect to the replacement of $T$ by either $T^*$, $T_1$ or $T_1^*$.

Moreover, we need some type of diagonal BMO assumptions. This amounts to replacing exactly one of the indicator functions above by a cancellative function supported on the same cube. More precisely:

\begin{ass}[Partial weak boundedness $\times$ BMO condition]
Let $I \subset \R^n$ and $J \subset \R^m$ be cubes and $a_I$, $a_J$ be functions such that $\int a_I \,d\mu_n = \int a_J \,d\mu_m = 0$, spt$\, a_I \subset I$ and spt$\, a_J \subset J$. We assume that
\begin{displaymath}
|\langle T(\chi_I \otimes \chi_J), a_I \otimes \chi_J \rangle| +  |\langle T^*(\chi_I \otimes \chi_J), a_I \otimes \chi_J \rangle| \le C\|a_I\|_{L^2(\mu_n)} \mu_n(5I)^{1/2} \mu_m(5J)
\end{displaymath}
and
\begin{displaymath}
|\langle T(\chi_I \otimes \chi_J), \chi_I \otimes a_J \rangle| + |\langle T^*(\chi_I \otimes \chi_J), \chi_I \otimes a_J \rangle| \le C\mu_n(5I) \|a_J\|_{L^2(\mu_m)} \mu_m(5J)^{1/2}.
\end{displaymath}
\end{ass}
Note that no new conditions arise by replacing $T$ by $T_1$ above.
The reason for calling this a BMO condition will become more clear in the course of the proof. Observe that both the full weak boundedness and the partial weak boundedness $\times$ BMO condition are obviously necessary conditions for the boundedness of $T$ on $L^2(\mu)$. In fact, they would both follow from the following stronger diagonal testing condition
\begin{displaymath}
\|\chi_I \otimes \chi_J T(\chi_I \otimes \chi_J)\|_{L^2(\mu)} + \|\chi_I \otimes \chi_J T^*(\chi_I \otimes \chi_J)\|_{L^2(\mu)} \le C\mu_n(5I)^{1/2}\mu_m(5J)^{1/2}.
\end{displaymath}

\subsection{Dyadic grids and Haar functions}

In order to formulate the actual ``$T1\in\textup{BMO}$'' conditions in the present setting, it is is necessary to introduce some notation related to dyadic martingale difference decompositions and Haar functions.

Let $\mathcal{D}^0_n$ and $\mathcal{D}^0_m$ be the standard dyadic grids on $\R^n$ and $\R^m$ respectively.

\subsubsection{Random dyadic grids and good/bad cubes}
We let $w_n = (w_{n,i})_{i \in \Z}$, $w_n' = (w'_{n,i})_{i \in \Z}$, $w_m =  (w_{m,j})_{j \in \Z}$
and $w'_m =  (w'_{m,j})_{j \in \Z}$, where $w_{n,i}, w'_{n,i} \in \{0,1\}^n$ and $w_{m,j}, w'_{m,j} \in \{0,1\}^m$. 
In $\R^n$ we define the new dyadic grid $\mathcal{D}_n = \{I + \sum_{i:\, 2^{-i} < \ell(I)} 2^{-i}w_{n,i}: \, I \in \mathcal{D}_n^0\} = \{I + w_n: \, I \in \mathcal{D}_n^0\}$, where we simply have defined
$I + w_n := I + \sum_{i:\, 2^{-i} < \ell(I)} 2^{-i}w_{n,i}$. The grids $\mathcal{D}_n'$, $\mathcal{D}_m$ and $\mathcal{D}_m'$ are similarly defined.
There is a natural product probability structure on $(\{0,1\}^n)^{\Z}$ and $(\{0,1\}^m)^{\Z}$. Even if $n=m$ and despite the notation we agree that we have four independent random dyadic grids.

A cube $I_1 \in \mathcal{D}_n$ is called bad if there exists $I_2 \in \mathcal{D}_n'$ so that $\ell(I_2) \ge 2^r \ell(I_1)$ and $d(I_1, \partial I_2) \le 4\ell(I_1)^{\gamma_n}\ell(I_2)^{1-\gamma_n}$.
Here $\ell(I)$ denotes the side length of $I$ and $\gamma_n = \alpha/(2d_{\lambda_n} + 2\alpha)$, where $\alpha > 0$ appears in the kernel estimates and, we recall, $d_{\lambda_n} =  \log_2 C_{\lambda_n}$.
The parameter $r > 0$ will be fixed later. Note also that here bad really means $\mathcal{D}_n'$-bad, but this is not usually spelled out. 
The badness in $\R^m$ is defined similarly (it involves the same parameter $r$ and $\gamma_m = \beta/(2d_{\lambda_m}+2\beta)$).

It is easy to check that, almost surely with respect to the canonical probability on $(\{0,1\}^n)^{\Z}$, a random dyadic system $\mathcal{D}_n$ has the following ``no quadrants'' property: Whenever $I_k\in\mathcal{D}_n$, $k\in\N$, is a strictly increasing sequence (meaning that $I_k\subsetneq I_{k+1}$) of dyadic cubes, then this sequence exhausts all of $\R^n$ (meaning that $\bigcup_{k\in\N}I_k=\R^n$). By throwing away a subset of the probability space with probability zero, we can and will assume that all our dyadic systems possess this additional ``no quadrants'' property. This convention is a matter of convenience, which has no effect on any of the probabilistic statements that we are going to make.

\subsubsection{Haar functions on $\R^n$ with general measures}

The content of this subsection is from \cite{Hy3}.
Let $\mathcal{D}_n$ be one of the dyadic grids on $\R^n$ as above and $\mu_n$ be any locally finite Borel measure on $\R^n$. We set
\begin{align*}
E_If  &= \langle f \rangle_I\chi_I, \\
\Delta_I f &= \sum_{I' \in \textup{ch}(I)} [ \langle f \rangle_{I'} - \langle f \rangle_I]\chi_{I'},
\end{align*}
where ch$(I) = \{I_j:\, j = 1, \ldots, 2^n\}$ is the collection of dyadic children of $I$, and $\langle f \rangle_I = \mu_n(I)^{-1} \int_I f\,d\mu_n$. For any $\ell \in \Z$ and $f \in L^2(\mu_n)$ we have the orthogonal decomposition
\begin{displaymath}
f = \sum_{\ell(I) = 2^\ell} E_I f + \sum_{\ell(I) \le 2^\ell} \Delta_I f.
\end{displaymath}
We index the children $I_j$ of $I$ in such a way that
\begin{equation*}
  \mu_n(\widehat I_k) \ge [1-(k-1)2^{-n}]\mu_n(I),\qquad
  \widehat I_{k} = \bigcup_{u=k}^{2^n} I_u.
\end{equation*}
Hence $\mu_n(\widehat I_k) \sim \mu_n(I)$ for every $k$. We then continue to decompose
\begin{align*}
E_If &= \langle f, h_{I,0}\rangle  h_{I,0}, \\
\Delta_I f& = \sum_{\eta = 1}^{2^n-1} \langle f, h_{I,\eta}\rangle  h_{I,\eta},
\end{align*}
where $h_{I,0} = \mu_n(I)^{-1/2}\chi_I$ and
\begin{displaymath}
h_{I,\eta} = \Big( \frac{\mu_n(I_{\eta})\mu_n(\widehat I_{\eta+1})}{\mu_n(\widehat I_{\eta})}\Big)^{1/2} \Big[ \frac{\chi_{I_{\eta}}}{\mu_n(I_{\eta})} - \frac{\chi_{\widehat I_{\eta+1}}}{\mu_n(\widehat I_{\eta+1})}\Big], \qquad \eta = 1, \ldots, 2^n-1.
\end{displaymath}
If $\mu_n(I_{\eta}) = 0$, we set $h_{I,\eta} = 0$.

We have the orthonormality $\langle h_{I_1, \eta_1}, h_{I_2, \eta_2} \rangle = \delta_{I_1}^{I_2} \delta_{\eta_1}^{\eta_2}$.
Moreover, for $\eta \ne 0$ we have cancellation: $\int h_{I,\eta}\,d\mu_n = 0$. Finally, $h_{I,\eta}$ is supported on $I$ and constant
on its children. We have established that $\{h_{I,0}: \, \ell(I) = 2^\ell\} \cup \{h_{I, \eta}:\,  \ell(I) \le 2^\ell, \eta \in \{1, \ldots, 2^n-1\}\}$ is
an orthonormal basis for $L^2(\mu_n)$ with the above mentioned properties.

%\subsection{Product decompositions on $\R^{n+m}$}
Suppose now that $\mu = \mu_n \times \mu_m$, where $\mu_n$ and $\mu_m$ are locally finite Borel measures on $\R^n$ and $\R^m$ respectively. Assume also that we are given
a dyadic grid $\mathcal{D}_n$ on $\R^n$ and a dyadic grid $\mathcal{D}_m$ on $\R^m$. We denote the Haar functions on $\R^n$ with respect to the measure $\mu_n$
by $h_{I, \eta}$, $I \in \mathcal{D}_n$, $\eta = 0, \ldots, 2^n -1$, and we denote the Haar functions on $\R^m$ with respect to the measure $\mu_m$ by $u_{J, \kappa}$, $J \in \mathcal{D}_m$, $\kappa
= 0, \ldots, 2^m-1$.

\subsection{BMO conditions}

We recall the definition of the BMO space relevant in the upper doubling setting from \cite{NTV,HM}.

\begin{defn}
We say that $f \in L^1_{\textrm{loc}}(\mu_n)$ belongs to $\textup{BMO}^p_{\kappa}(\mu_n)$, if for any cube $I \subset \R^n$ there
exists a constant $f_I$ such that
\begin{displaymath}
\Big( \int_I |f-f_I|^p\,d\mu_n\Big)^{1/p} \le L\mu_n(\kappa I)^{1/p},
\end{displaymath}
where the constant $L$ does not depend on $I$. The best constant $L$ is denoted $\|f\|_{\textup{BMO}^p_{\kappa}(\mu_n)}$.
\end{defn}

The following lemma from \cite{NTV,HM} motivates our definition of the BMO space for a product measure $\mu_n\times\mu_m$:

\begin{lem}\label{lem:BMO1}
For all $J\in\mathcal{D}_n'$,
\begin{equation*}
  \Big(\sum_{\substack{I\in\mathcal{D}_n\textup{ good}\\ I\subset J\\ \ell(I)\leq 2^{-r}\ell(J)}}
  \abs{\ave{f,h_I}}^2\Big)^{1/2}\lesssim\mu_n(J)^{1/2}\|f\|_{\textup{BMO}_{\kappa}^2(\mu_n)}.
\end{equation*}
\end{lem}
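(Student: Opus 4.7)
The plan is to combine the Haar--Parseval identity with a Whitney-type decomposition of $J$ into sub-cubes on which the BMO constant becomes effective. Write $L := \Norm{f}{\textup{BMO}^2_{\kappa}(\mu_n)}$ and let $\mathcal{K}$ denote the family of all \emph{maximal} cubes $K \in \mathcal{D}_n$ with $\kappa K \subset J$; because $\kappa > 1$ this forces $K \subset J$, and by the dyadic structure the family $\mathcal{K}$ is pairwise disjoint.

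The first task is to check that, provided the goodness parameter $r$ is chosen large enough (in terms of $\kappa$ and $\gamma_n$), every good cube $I$ appearing in the sum is contained in some $K \in \mathcal{K}$. Since $I$ is good and $\ell(J) \ge 2^r\ell(I)$, the goodness condition gives
\[
d(I,\partial J) > 4\ell(I)^{\gamma_n}\ell(J)^{1-\gamma_n},
\]
which for sufficiently large $r$ dominates $(\kappa-1)\ell(I)/2$ and therefore ensures $\kappa I \subset J$. Consequently the maximal dyadic ancestor $K \supseteq I$ with $\kappa K \subset J$ lies in $\mathcal{K}$, and $I \subseteq K$.

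For each fixed $K \in \mathcal{K}$ the collection $\{h_{I,\eta} : I \subseteq K,\ \eta \ge 1\}$ is an orthonormal basis of the mean-zero subspace of $L^2(K,\mu_n)$. Since $\int h_{I,\eta}\,d\mu_n = 0$ for $\eta \ge 1$, we have $\ave{f,h_{I,\eta}} = \ave{(f-\ave{f}_K)\chi_K, h_{I,\eta}}$, and Parseval yields
\[
\sum_{\substack{I \subseteq K\\ \eta \ge 1}} \abs{\ave{f, h_{I,\eta}}}^2 = \int_K \abs{f - \ave{f}_K}^2\,d\mu_n \le L^2 \mu_n(\kappa K),
\]
where in the last inequality we used that $\ave{f}_K$ minimises the $L^2$-distance on $K$ and hence does no worse than the best BMO constant $f_K$. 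Summing over the disjoint family $\mathcal{K}$, and recalling that every good $I$ from the original sum sits in a unique $K\in\mathcal{K}$, one obtains
\[
\sum_{\substack{I\in\mathcal{D}_n\textup{ good}\\ I\subset J\\ \ell(I)\le 2^{-r}\ell(J)}} \abs{\ave{f, h_I}}^2 \le L^2 \sum_{K\in\mathcal{K}} \mu_n(\kappa K).
\]

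It remains to show that the last sum is $\lesssim_{\kappa,n} \mu_n(J)$, which reduces to the bounded overlap of the expanded family $\{\kappa K\}_{K\in\mathcal{K}}$. Maximality of $K$ produces a point of $\R^n\setminus J$ within $(\kappa+1)\ell(K)$ of the centre $c_K$, while $\kappa K\subset J$ forces $d(c_K,J^c)\ge\kappa\ell(K)/2$; together these imply $\ell(K)\sim d(c_K, J^c)/\kappa$. Consequently, at any given $x\in J$ only boundedly many $K\in\mathcal{K}$ can satisfy $x\in\kappa K$, and since $\bigcup_{K\in\mathcal{K}}\kappa K\subset J$ one gets $\sum_{K\in\mathcal{K}}\mu_n(\kappa K)\lesssim\mu_n(J)$. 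The main obstacle I anticipate is organising the two thresholds carefully: the goodness parameter $r$ must be chosen large enough for the implication $\kappa I\subset J$, and the bounded-overlap constant must depend only on $\kappa$ and $n$. Both verifications are distance-computation exercises, but they have to be handled with care so as to recover the sharp $\mu_n(J)^{1/2}$ bound rather than the $\mu_n(\kappa J)^{1/2}$ bound produced by a naive application of BMO directly on $J$.
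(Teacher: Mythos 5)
Your proof is correct and is essentially the standard argument for this lemma from the cited sources \cite{NTV,HM} (the paper itself only quotes the result without proof): a Whitney-type family of maximal $K$ with $\kappa K\subset J$, goodness to place each small good $I$ inside some $K$, Bessel/Parseval plus the BMO bound on each $K$, and bounded overlap of the $\kappa K$. The only caveat, which you correctly flag, is that the implication $\kappa I\subset J$ uses $d(I,\partial J)>4\ell(I)^{\gamma_n}\ell(J)^{1-\gamma_n}\geq 2^{2+r(1-\gamma_n)}\ell(I)\geq(\kappa-1)\ell(I)/2$, a harmless constraint tying $r$ to $\kappa$ that is satisfied for the values used in the paper.
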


\begin{defn}\label{def:BMOprod}
We say that $b \in \textup{BMO}_{\textup{prod}}(\mu)$, if for every $\mathcal{D} = \mathcal{D}_n \times \mathcal{D}_m$ and $\mathcal{D}' = \mathcal{D}_n' \times \mathcal{D}_m'$
there holds that
\begin{equation*}
   \Big(\sum_{\eta=1}^{2^n-1} \sum_{\kappa = 1}^{2^m-1} \sum_{\substack{S\in\mathcal{D}'\\S\subset\Omega}}
  \sum_{\substack{R = I \times J \in\mathcal{D}\textup{ good},\ R\subset S\\ \operatorname{gen}(R)=\operatorname{gen}(S)+(r,r)}}
    |\langle b, h_{I,\eta} \otimes u_{J,\kappa}\rangle|^2 \Big)^{1/2}
  \le L\mu(\Omega)^{1/2}
\end{equation*}
for all sets $\Omega \subset \R^{n+m}$ such that $\mu(\Omega) < \infty$ and such that for every $x \in \Omega$ there
exists $S \in \mathcal{D}'$ so that $x \in S \subset \Omega$. 
The best constant $L$ is denoted $\|b\|_{\textup{BMO}_{\textup{prod}}(\mu)}$. Here
\begin{displaymath}
\operatorname{gen}(R):=(\operatorname{gen}(I),\operatorname{gen}(J)), 
\end{displaymath}
and $\operatorname{gen}(I)$ is the usual generation of a dyadic cube (if $\ell(I) = 2^{-k}$ then $\operatorname{gen}(I) = k$).
Moreover, the fact that $R$ is good means that $I$ is $\mathcal{D}_n'$-good and $J$ is $\mathcal{D}_m'$-good.
\end{defn}

\begin{rem}\label{rem:BMOprod}
In the definition of $ \textup{BMO}_{\textup{prod}}(\mu)$, we can equivalently restrict to considering sets $\Omega$ which, in addition to the stated property, are also bounded. Namely, if the defining inequality holds for this more restricted class of sets, and $\Omega$ is any set as in Definition~\ref{def:BMOprod}, then we simply check the restricted product BMO condition for bounded sets of the form $\Omega\cap S_k$, where $S_k$ is the union of the $2^{n+m}$ dyadic cubes in $\mathscr{D}'$ of sidelength $2^k$ that lie closest to the origin. We conclude that the square sum over $S\subset\Omega\cap S_k$ is dominated by $L\mu(\Omega\cap S_k)^{1/2}\leq L\mu(\Omega)^{1/2}$, and taking the limit as $k\to\infty$, we find that the full sum over $S\subset\Omega$ is also dominated by $L\mu(\Omega)^{1/2}$, as required in Definition~\ref{def:BMOprod}.
\end{rem}

%The sufficiency of these product BMO conditions for the boundedness of some full paraproducts is demonstrated in the course of the proof of the main theorem.

%\begin{ass}[Full BMO conditions]
%We assume that $S1 \in \textup{BMO}_{\textup{prod}}(\mu)$ for each $S \in \{T, T^*, T_1, T_1^*\}$.
%\end{ass}
%Here $T_1$ is the partial adjoint  of $T$ and is defined by
%\begin{displaymath}
%\langle T_1(f_1 \otimes f_2), g_1 \otimes g_2 \rangle = \langle T(g_1 \otimes f_2), f_1 \otimes g_2\rangle.
%\end{displaymath}
%The related
%dyadic Journ\'e's lemma for general product measures is proved in Section \ref{sec:journe}. Section \ref{sec:dual} is devoted to the proof of a certain $H^1$--BMO type duality inequality, which is needed for the boundedness of some mixed full paraproducts.

We can now formulate our main theorem:

\begin{thm}\label{thm:main}
Let $\mu = \mu_n \times \mu_m$, where $\mu_n$ and $\mu_m$ are upper doubling measures on $\R^n$ and $\R^m$ respectively. Let $T\colon L^2(\mu) \to L^2(\mu)$ be a bi-parameter Calder\'on--Zygmund  operator,
which is a priori bounded. Assume that $T$ satisfies all the Assumptions formulated in this section. Moreover, assume that
\begin{equation*}
  S1 \in \textup{BMO}_{\textup{prod}}(\mu)\qquad\forall S \in \{T, T^*, T_1, T_1^*\}.
\end{equation*}
Then there holds that $\|T\| \lesssim 1$, a bound depending only on the Assumptions and the $\textup{BMO}_{\textup{prod}}(\mu)$ norms of the four $S1$, 
but independent of the a priori bound.
\end{thm}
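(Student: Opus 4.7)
The plan is to follow the non-homogeneous probabilistic dyadic strategy of Nazarov--Treil--Volberg, lifted to the bi-parameter setting as in \cite{Ma1}. I would introduce the four independent random dyadic grids $\mathcal{D}_n,\mathcal{D}_n',\mathcal{D}_m,\mathcal{D}_m'$ and average the bilinear form $\langle Tf,g\rangle$ over them. Using the good/bad decomposition and the fact that the probability of a given $\mathcal{D}_n$-cube being $\mathcal{D}_n'$-bad is small (arbitrarily so by taking $r$ large), the contribution of bad cubes is absorbed into the a priori operator norm $\|T\|_{L^2(\mu)\to L^2(\mu)}$, leaving only good cubes to be estimated by a bound independent of that a priori norm.

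Next I expand $f$ and $g$ in the Haar bases $h_{I,\eta}\otimes u_{J,\kappa}$ associated to the two grids, so that $\langle Tf,g\rangle$ becomes a quadruple sum over good cubes $I\in\mathcal{D}_n$, $I'\in\mathcal{D}_n'$, $J\in\mathcal{D}_m$, $J'\in\mathcal{D}_m'$. The four operators $T,T^*,T_1,T_1^*$ symmetrize the roles of $I$ vs.\ $I'$ and $J$ vs.\ $J'$, reducing to $\ell(I)\le\ell(I')$ and $\ell(J)\le\ell(J')$. The goodness of $I$ then produces the standard trichotomy in the $\R^n$ variable: (i) \emph{separated}, $d(I,I')>\ell(I)^{\gamma_n}\ell(I')^{1-\gamma_n}$; (ii) \emph{inside and close}, $I\subset I'$ with $\ell(I)\ge 2^{-r}\ell(I')$; (iii) \emph{nested deep}, $I\subset I'$ with $\ell(I)\le 2^{-r}\ell(I')$. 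The same trichotomy in $\R^m$ yields nine combined cases.

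The separated$\times$separated sum is handled by the full H\"older kernel estimate together with the upper doubling inequality. The mixed separated/inside combinations use the mixed H\"older--size estimate, the partial kernel representation, and the partial boundedness bound $C(f_2,g_2)\lesssim\|f_2\|_{L^2(\mu_m)}\|g_2\|_{L^2(\mu_m)}$ (and its analog). The close$\times$close pieces reduce, after summing over pairs of comparable cubes, to the full weak boundedness property and the partial weak boundedness$\times$BMO assumption. The genuinely difficult block is the deep-nested$\times$deep-nested case: after using Haar cancellation on the small cubes $I,J$ to replace the large-cube values by averages, this case yields a bi-parameter paraproduct
\begin{equation*}
  \Pi_b(f)=\sum_{I\in\mathcal{D}_n,\,J\in\mathcal{D}_m}\langle b,h_{I,\eta}\otimes u_{J,\kappa}\rangle\,\langle f\rangle_{I\times J}\,h_{I,\eta}\otimes u_{J,\kappa},\qquad b=T1,
\end{equation*}
together with several \emph{mixed} paraproducts in which only one factor of $b$ carries Haar cancellation while the other carries an averaging.

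The principal obstacle is proving $L^2(\mu)$-boundedness of these paraproducts from the $\textup{BMO}_{\textup{prod}}(\mu)$ hypothesis on $T1,T^*1,T_11,T_1^*1$. In the homogeneous case this rests on Chang--Fefferman $H^1$--$\textup{BMO}_{\textup{prod}}$ duality together with Journ\'e's covering lemma; here both need to be reproved in the upper doubling setting (this is exactly what the introduction promises to supply). For the mixed paraproducts, which are genuinely new in the bi-parameter non-homogeneous problem, one parameter presents a single-variable $\textup{BMO}_\kappa^2$ object controllable by Lemma~\ref{lem:BMO1}, while the other is handled by a non-homogeneous Carleson-type embedding along the transverse parameter. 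The remaining mixed cases (deep-nested in one parameter, close in the other) reduce, after collapsing the close parameter to a single-cube testing, to the partial weak boundedness$\times$BMO assumption paired with a one-parameter paraproduct estimate. Assembling these contributions yields a bound on $|\langle Tf,g\rangle|\lesssim\|f\|_{L^2(\mu)}\|g\|_{L^2(\mu)}$ depending only on the structural assumptions and on the $\textup{BMO}_{\textup{prod}}(\mu)$ norms of the four $S1$.
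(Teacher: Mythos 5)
Your overall strategy coincides with the paper's: random grids with the Nazarov--Treil--Volberg averaging trick, a good/bad reduction absorbing a small multiple of $\|T\|$, Haar expansion into the nine separated/adjacent/nested cases, kernel estimates for the separated blocks, paraproducts for the nested blocks, and the new $H^1$--BMO duality reserved for the mixed paraproduct. There is, however, one concrete gap: the adjacent (``close'') cases. You assert that the close$\times$close pieces ``reduce, after summing over pairs of comparable cubes, to the full weak boundedness property,'' but that assumption only controls pairings $\langle T(\chi_I\otimes\chi_J),\chi_I\otimes\chi_J\rangle$ with the \emph{same} cube on both sides, whereas here $I_1\in\mathcal{D}_n$ and $I_2\in\mathcal{D}_n'$ come from independent grids and merely overlap in an uncontrolled way (and the overlap region can carry almost all of the non-doubling measure). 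The paper handles this with the Nazarov--Treil--Volberg \emph{surgery}: each child of $I_1$ is partitioned into a separated part, a thin boundary part, and a common core $\bigcup_i L_i$ built from an auxiliary random grid $\mathcal{D}_n^*$; the separated parts are controlled by kernel size estimates with a constant $C(\theta)$, the matching cores $L_i=L_j$ satisfy $5L_i\subset I_1\cap I_2$ so that the weak boundedness (resp.\ diagonal BMO) assumption applies, and the boundary parts are bounded by $\|T\|\,\mu_n(I_{1,\textup{bad}})^{1/2}\mu_n(I_2)^{1/2}(\cdots)$, whose expectation over $\mathcal{D}_n'$ and $\mathcal{D}_n^*$ is $c(\theta)\|T\|$ with $c(\theta)\to0$. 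Consequently, contrary to your claim, the good-cube part is \emph{not} estimated independently of the a priori norm: it is bounded by $C(\theta)+c(\theta)\|T\|$, and only the final absorption $0.6\|T\|\leq C+0.1\|T\|$ eliminates $\|T\|$. Without the surgery the adjacent cases do not close.

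A smaller inaccuracy: Journ\'e's covering lemma is not used in the sufficiency direction at all. The full paraproduct $\Pi^{\eta_1\kappa_1}_{T^*1}$ is bounded directly from Definition~\ref{def:BMOprod} by a level-set argument with the strong dyadic maximal operator, with no appeal to $H^1$--BMO duality either; the duality inequality of Proposition~\ref{prop:dualprop} is needed only for the mixed paraproduct $\Pi_{\textup{mixed},\,b}$, and Journ\'e's lemma (Theorem~\ref{thm:Journe}) enters only in Section~\ref{sec:bmo} to prove that $Tb\in\textup{BMO}_{\textup{prod}}(\mu)$ for $b\in L^\infty(\mu)$, i.e.\ the necessity of the hypothesis.
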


\begin{rem}
Note that at least for a large subclass of operators (see Section \ref{sec:bmo}) our conditions, except for the conditions
$T_1(1), T_1^*(1) \in \textup{BMO}_{\textup{prod}}(\mu)$, are necessary. Indeed, $T_1$ does not have to be bounded even if $T$ is. This deficit is shared
by all known bi-parameter $T1$ theorems, already in the homogeneous setting.
\end{rem}

\section{Strategy of the proof}

\subsection{Initial reductions}
The following reduction is quite standard, but we do it carefully, since in the bi-parameter case the details are not written anywhere else. We do not need
an exact averaging equality as in some of the latest non-homogeneous work -- the by now classical trick of Nazarov, Treil and Volberg suffices here.

Let us fix $f, g \in C(\R^{n+m})$ with compact support so that $0.7\|T\| \le |\langle Tf, g\rangle|$ (here $\|T\| = \|T\|_{L^2(\mu) \to L^2(\mu)} < \infty$) and $\|f\|_{L^2(\mu)} = \|g\|_{L^2(\mu)} = 1$.
We fix the parameter $\ell$ so that spt$\,f$, spt$\,g \subset [-2^{\ell}, 2^{\ell}]^{n+m}$.

We define
\begin{align*}
E_k f &= \sum_{\eta_1} \mathop{\sum_{I_1 \in \mathcal{D}_n}}_{2^{-k} < \ell(I_1) \le 2^{\ell}} h_{I_1, \eta_1} \otimes \langle f,  h_{I_1, \eta_1}\rangle_1;\\
E_k'g & = \sum_{\eta_2} \mathop{\sum_{I_2 \in \mathcal{D}'_n}}_{2^{-k} < \ell(I_2) \le 2^{\ell}} h_{I_2, \eta_2} \otimes \langle g,  h_{I_2, \eta_2}\rangle_1; \\
\tilde E_k f &= \sum_{\kappa_1} \mathop{\sum_{J_1 \in \mathcal{D}_m}}_{2^{-k} < \ell(J_1) \le 2^{\ell}} \langle f, u_{J_1, \kappa_1} \rangle_2 \otimes u_{J_1, \kappa_1}; \\
\tilde E_k' g &=  \sum_{\kappa_2} \mathop{\sum_{J_2 \in \mathcal{D}'_m}}_{2^{-k} < \ell(J_2) \le 2^{\ell}} \langle g, u_{J_2, \kappa_2} \rangle_2 \otimes u_{J_2, \kappa_2}.
\end{align*}
Here $\langle f, h_{I_1, \eta_1}\rangle_1(y_2) = \langle f(\cdot, y_2), h_{I_1, \eta_1} \rangle = \int f(y_1, y_2)h_{I_1, \eta_1}(y_1)\,d\mu_n(y_1)$.

Note that these are linear operators, $f = \lim_{k \to \infty} E_k f = \lim_{k \to \infty} \tilde E_k f$ on $L^2(\mu)$, $\|E_k f\|_{L^2(\mu)}, \|\tilde E_k f\|_{L^2(\mu)} \le \|f\|_{L^2(\mu)} = 1$ and $E_k \tilde E_k f = \tilde E_k E_k f$.
We write
\begin{align*}
\langle Tf, g\rangle &= \langle T(f-E_kf), g\rangle + \langle T(E_kf), g - E_k' g\rangle \\
&+ \langle T(E_kf - \tilde E_k E_k f), E_k' g\rangle + \langle T(\tilde E_k E_k f), E_k' g - \tilde E_k' E_k' g \rangle \\
&+ \langle T(\tilde E_k E_k f), \tilde E_k' E_k' g\rangle,
\end{align*}
or more briefly, $\langle Tf, g\rangle = \langle T(\tilde E_k E_k f), \tilde E_k' E_k' g\rangle + \epsilon_k(\omega)$, $\omega = (\omega_n, \omega_n', \omega_m, \omega_m')$.
It is easy to see using the above listed properties that $\lim_{k \to \infty} \epsilon_k(\omega) = 0$ and $|\epsilon_k(\omega)| \le 2\|T\|$. Using dominated convergence we deduce that
\begin{displaymath}
\langle Tf, g\rangle = \lim_{k \to \infty} E \langle T(\tilde E_k E_k f), \tilde E_k' E_k' g\rangle,
\end{displaymath}
where the expectation $E = E_{\omega}$.

Note that
\begin{align*}
\tilde E_k E_k f = \sum_{\eta_1, \kappa_1} \mathop{\sum_{I_1 \in \mathcal{D}_n}}_{2^{-k} < \ell(I_1) \le 2^{\ell}} \mathop{\sum_{J_1 \in \mathcal{D}_m}}_{2^{-k} < \ell(J_1) \le 2^{\ell}}
\langle f, h_{I_1, \eta_1} \otimes u_{J_1, \kappa_1} \rangle h_{I_1, \eta_1} \otimes u_{J_1, \kappa_1}.
\end{align*}
We write
\begin{align*}
\mathop{\sum_{I_1 \in \mathcal{D}_n}}_{2^{-k} < \ell(I_1) \le 2^{\ell}} \mathop{\sum_{J_1 \in \mathcal{D}_m}}_{2^{-k} < \ell(J_1) \le 2^{\ell}} =
&\mathop{\sum_{I_1 \in \mathcal{D}_{n,\, \textup{good}}}}_{2^{-k} < \ell(I_1) \le 2^{\ell}} \mathop{\sum_{J_1 \in \mathcal{D}_{m, \textup{good}}}}_{2^{-k} < \ell(J_1) \le 2^{\ell}} \\
&+ \mathop{\sum_{I_1 \in \mathcal{D}_{n,\, \textup{good}}}}_{2^{-k} < \ell(I_1) \le 2^{\ell}} \mathop{\sum_{J_1 \in \mathcal{D}_{m, \textup{bad}}}}_{2^{-k} < \ell(J_1) \le 2^{\ell}} 
+\mathop{\sum_{I_1 \in \mathcal{D}_{n,\, \textup{bad}}}}_{2^{-k} < \ell(I_1) \le 2^{\ell}} \mathop{\sum_{J_1 \in \mathcal{D}_m}}_{2^{-k} < \ell(J_1) \le 2^{\ell}}.
\end{align*}
Denoting $f_k = \tilde E_k E_k f$, this gives us the decomposition
\begin{displaymath}
f_k = f_{k, \textup{good}} + f_{k, \textup{bad}}.
\end{displaymath}
Let also $g_k = \tilde E_k' E_k' g = g_{k, \textup{good}} + g_{k, \textup{bad}}$. 

We now write
\begin{align*}
\langle Tf_k, g_k\rangle = \langle Tf_{k, \textup{good}}, g_{k, \textup{good}}\rangle + \langle Tf_{k, \textup{good}}, g_{k, \textup{bad}}\rangle +  \langle Tf_{k, \textup{bad}}, g_k\rangle.
\end{align*}
This gives us that
\begin{align*}
|\langle Tf, g\rangle| &= \lim_{k \to \infty} |E \langle Tf_k ,g_k\rangle| \\
&\le \lim_{k \to \infty} (|E \langle Tf_{k, \textup{good}}, g_{k, \textup{good}}\rangle| + \|T\| E \|g_{k, \textup{bad}}\|_{L^2(\mu)} + \|T\| E \|f_{k, \textup{bad}}\|_{L^2(\mu)}).
\end{align*}
Estimating
\begin{align*}
\|f_{k, \textup{bad}}\|_{L^2(\mu)} \le &\Big(\sum_{\eta_1, \kappa_1} \mathop{\sum_{I_1 \in \mathcal{D}_n}}_{\ell(I_1) \le 2^{\ell}}
\mathop{\sum_{J_1 \in \mathcal{D}_m}}_{\ell(J_1) \le 2^{\ell}} \chi_{\textup{bad}}(J_1) |\langle f, h_{I_1, \eta_1} \otimes u_{J_1, \kappa_1} \rangle|^2 \Big)^{1/2} \\
&+ \Big(\sum_{\eta_1, \kappa_1} \mathop{\sum_{I_1 \in \mathcal{D}_n}}_{\ell(I_1) \le 2^{\ell}} \mathop{\sum_{J_1 \in \mathcal{D}_m}}_{\ell(J_1) \le 2^{\ell}} \chi_{\textup{bad}}(I_1) |\langle f, h_{I_1, \eta_1} \otimes u_{J_1, \kappa_1} \rangle|^2 \Big)^{1/2}
\end{align*}
gives us the bound
\begin{align*}
&E\|f_{k, \textup{bad}}\|_{L^2(\mu)} \\
&\le E_{(\omega_n, \omega_n', \omega_m)} 
\Big(\sum_{\eta_1, \kappa_1} \mathop{\sum_{I_1 \in \mathcal{D}_n}}_{\ell(I_1) \le 2^{\ell}} \mathop{\sum_{J_1 \in \mathcal{D}_m}}_{\ell(J_1) \le 2^{\ell}} \mathbb{P}_{\omega_m'}(J_1 \textup{ bad})
 |\langle f, h_{I_1, \eta_1} \otimes u_{J_1, \kappa_1} \rangle|^2 \Big)^{1/2} \\
&+ E_{(\omega_n, \omega_m, \omega_m')}  \Big(\sum_{\eta_1, \kappa_1}
\mathop{\sum_{I_1 \in \mathcal{D}_n}}_{\ell(I_1) \le 2^{\ell}} \mathop{\sum_{J_1 \in \mathcal{D}_m}}_{\ell(J_1) \le 2^{\ell}} \mathbb{P}_{\omega_n'}(I_1 \textup{ bad}) |\langle f, h_{I_1, \eta_1} \otimes u_{J_1, \kappa_1} \rangle|^2 \Big)^{1/2} \le c(r),
\end{align*}
where $c(r) \to 0$, when $r \to \infty$. Fixing $r$ to be sufficiently large we have shown that
\begin{equation}\label{eq:InitialReduction}
  0.7\|T\| \le |\langle Tf, g\rangle| \le  \lim_{k \to \infty} |E\langle Tf_{k, \textup{good}}, g_{k, \textup{good}}\rangle| + 0.1\|T\|.
\end{equation}

\subsection{Outline of the core of the proof}
The core of the proof, which will span most of the rest of the paper, consists of showing that
\begin{equation}\label{eq:CoreToProve}
\begin{split}
&|E\langle Tf_{k, \textup{good}}, g_{k, \textup{good}}\rangle| \\
&= \Big| E \mathop{\sum_{\eta_1, \eta_2}}_{\kappa_1, \kappa_2} \mathop{\sum_{I_1, I_2 \, \textup{good}}}_{2^{-k} < \ell(I_1),\, \ell(I_2) \le 2^{\ell}} \mathop{\sum_{J_1, J_2\, \textup{good}}}_{2^{-k} < \ell(J_1),\, \ell(J_2) \le 2^{\ell}}
\langle f, h_{I_1, \eta_1} \otimes u_{J_1, \kappa_1} \rangle \\
&\hspace{3.5cm} \times \langle g, h_{I_2, \eta_2} \otimes u_{J_2, \kappa_2} \rangle \langle T(h_{I_1, \eta_1} \otimes u_{J_1, \kappa_1}), h_{I_2, \eta_2} \otimes u_{J_2, \kappa_2}\rangle\Big| \\
&\leq (C+0.1\|T\|)\|f\|_{L^2(\mu)}\|g\|_{L^2(\mu)}=C+0.1\|T\|,
\end{split}
\end{equation}
and hence, combining with \eqref{eq:InitialReduction},
\begin{equation*}
\begin{split}
  0.6\|T\| &\leq C+0.1\|T\|,\qquad\textup{which implies}\\
  \|T\| &\leq 2C,
\end{split}
\end{equation*}
completing the proof of Theorem~\ref{thm:main}.

For the proof of \eqref{eq:CoreToProve}, we fix $k$ (recall that $\ell$ is already fixed). We will bound $|E\langle Tf_{k, \textup{good}}, g_{k, \textup{good}}\rangle|$ uniformly on these quantities.
We suppress the restrictions
\begin{displaymath}
2^{-k} < \ell(I_1),\, \ell(I_2), \ell(J_1), \ell(J_2) \le 2^{\ell}
\end{displaymath}
from the notation. The goodness of the cubes is essential and so is the fact that we have the averaging operator $E$ in front (this is used in the proof to control multiple bad boundary regions).
However, sometimes (for the collapse of paraproducts) the fact that all the cubes are good is a problem. We deal with this during the proof by sometimes explicitly
replacing $f$ by $f_{\textup{good}}$ noting that $\langle f_{\textup{good}}, h_{I_1, \eta_1} \otimes u_{J_1, \kappa_1}\rangle = \langle f, h_{I_1, \eta_1} \otimes u_{J_1, \kappa_1}\rangle$ if
$I_1$ and $J_1$ are good, and $\langle f_{\textup{good}}, h_{I_1, \eta_1} \otimes u_{J_1, \kappa_1}\rangle = 0$ otherwise.

We will focus on the part of the summation, where $\ell(I_1) \le \ell(I_2)$ and $\ell(J_1) \le \ell(J_2)$. One exception of this is made during the proof to handle a certain mixed full paraproduct appearing in the summation
 $\ell(I_1) \le \ell(I_2)$, $\ell(J_1) > \ell(J_2)$. It has an essential difference to the full paraproduct appearing in our main summation, so it does need separate attention.
 
 In any case, we perform the splitting
 \begin{displaymath}
 \sum_{\ell(I_1) \le \ell(I_2)} =  \mathop{\sum_{\ell(I_1) \le \ell(I_2)}}_{d(I_1, I_2) > 2\ell(I_1)^{\gamma_n}\ell(I_2)^{1-\gamma_n}} +  \mathop{\sum_{\ell(I_1) < 2^{-r}\ell(I_2)}}_{d(I_1, I_2) \le 2\ell(I_1)^{\gamma_n}\ell(I_2)^{1-\gamma_n}}
 + \mathop{\sum_{2^{-r}\ell(I_2) \le \ell(I_1) \le \ell(I_2)}}_{d(I_1, I_2) \le 2\ell(I_1)^{\gamma_n}\ell(I_2)^{1-\gamma_n}}.
 \end{displaymath}
 These three parts are called separated, nested and adjacent respectively. The term nested makes sense, since the summing conditions (recalling that $I_1$ is good)
 actually imply that there is a child $I_{2,1} \in \textup{ch}(I_2)$ so that $d(I_1, I_{2,1}^c) > 4\ell(I_1)^{\gamma_n}\ell(I_{2,1})^{1-\gamma_n}$.
 
A similar splitting in the summation $\ell(J_1) \le \ell(J_2)$ is also performed. This splits the whole summation into nine parts. We explicitly deal with the following cases  (the remaining three being symmetric to one of these):
% parts
%separated/separated, separated/nested, separated/adjacent, adjacent/nested, nested/nested,
%adjacent/adjacent

\begin{tabular}{l|ccc}
\qquad$(J_1,J_2)$ & separated & adjacent & nested \\ 
 $(I_1,I_2)$\qquad\mbox{} & & & \\ \hline
 separated & $*$ & $*$ & $*$ \\
 adjacent & & $*$ & $*$ \\
 nested & & & $*$ \\
\end{tabular}

The cases where the first pair $(I_1,I_2)$ is separated are treated in Section~\ref{sec:sep}, and the cases where it is adjacent in Section~\ref{sec:adj}. The remaining cases where both pairs are nested are handled in Sections~\ref{sec:nest} and \ref{sec:mixed}. A combination of these gives the core estimate \eqref{eq:CoreToProve} and completes the proof of Theorem~\ref{thm:main}. In the final Section~\ref{sec:bmo}, we investigate the necessity of the condition that $T1\in\textup{BMO}_{\textup{prod}}(\mu)$.
%This work spans the sections 3--8. Things are put together in section 9.
%Sections \ref{sec:journe} and \ref{sec:bmo} are not strictly needed for the proof, but they are certainly a big part of the paper proving that our product BMO conditions are natural.
%Section \ref{sec:dual} is actually needed in the proof of the main theorem to deal with the mixed full paraproduct, which we mentioned before (this is done in the nested/nested section).
%Combining this work will yield our main theorem.

\begin{rem}[Vinogradov notation and implicit constants]
We will use the notation $f \lesssim g$ synonymously with $f \le Cg$ for some constant $C$. We also use $f \sim g$ if $f \lesssim g \lesssim f$. The dependence on the various parameters should be somewhat clear, but basically $C$ may depend on the various constants involved in the assumptions.
\end{rem}

\section{Separated cubes}\label{sec:sep}
 
In this section we deal with the part of the summation, where at least the pair of cubes $(I_1,I_2)$ is separated. This further splits into subcases according to the nature of the other pair $(J_1,J_2)$.
 
\subsection{Separated/separated}
\begin{lem}\label{ccss}
Let $I_1 \in \mathcal{D}_n$, $I_2 \in \mathcal{D}_n'$, $J_1 \in \mathcal{D}_m$ and $J_2 \in \mathcal{D}_m'$ be such that
$\ell(I_1) \le \ell(I_2)$, $\ell(J_1) \le \ell(J_2)$, $d(I_1, I_2) > 2\ell(I_1)^{\gamma_n}\ell(I_2)^{1-\gamma_n}$ and $d(J_1, J_2) > 2\ell(J_1)^{\gamma_m}\ell(J_2)^{1-\gamma_m}$. Let
$f_{I_1}$, $f_{J_1}$, $g_{I_2}$ and $g_{J_2}$ be functions supported on $I_1$, $J_1$, $I_2$ and $J_2$ respectively, and assume that
$\|f_{I_1}\|_{L^2(\mu_n)} = \|f_{J_1}\|_{L^2(\mu_m)} = \|g_{I_2}\|_{L^2(\mu_n)}  = \|g_{J_2}\|_{L^2(\mu_m)} = 1$ and 
 $\int f_{I_1}\,d\mu_n = \int f_{J_1}\,d\mu_m = 0$.
Then there holds that
\begin{align*}
|\langle T(f_{I_1} \otimes f_{J_1}), g_{I_2} \otimes g_{J_2}\rangle| \lesssim A^{\textup{sep}}_{I_1I_2}A^{\textup{sep}}_{J_1J_2}, 
\end{align*}
where
\begin{displaymath}
A^{\textup{sep}}_{I_1I_2} = \frac{\ell(I_1)^{\alpha/2}\ell(I_2)^{\alpha/2}}{D(I_1,I_2)^{\alpha}\sup_{z \in I_1} \lambda_n(z, D(I_1,I_2))} \mu_n(I_1)^{1/2} \mu_n(I_2)^{1/2}
\end{displaymath}
and
\begin{displaymath}
A^{\textup{sep}}_{J_1J_2} = \frac{\ell(J_1)^{\beta/2}\ell(J_2)^{\beta/2}}{D(J_1,J_2)^{\beta}\sup_{w \in J_1} \lambda_m(w, D(J_1,J_2))} \mu_m(J_1)^{1/2} \mu_m(J_2)^{1/2}.
\end{displaymath}
Here $D(I_1, I_2) = \ell(I_1) + \ell(I_2) + d(I_1,I_2)$.
\end{lem}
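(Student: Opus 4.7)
The plan is to exploit the disjointness of supports in both parameters to write the pairing as an absolutely convergent kernel integral, then squeeze cancellation out of the two zero-mean functions $f_{I_1}$ and $f_{J_1}$ via the full H\"older estimate, and finally integrate everything using Cauchy--Schwarz.

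\textbf{Step 1 (set up the kernel representation).} Since $d(I_1,I_2)>0$ and $d(J_1,J_2)>0$, the supports of $f_{I_1}\otimes f_{J_1}$ and $g_{I_2}\otimes g_{J_2}$ are disjoint in both coordinates, so the full kernel representation applies:
\begin{equation*}
  \pair{T(f_{I_1}\otimes f_{J_1})}{g_{I_2}\otimes g_{J_2}}
  =\iint_{I_2\times J_2}\iint_{I_1\times J_1} K(x,y)\,f_{I_1}(y_1)f_{J_1}(y_2)g_{I_2}(x_1)g_{J_2}(x_2)\ud\mu(y)\ud\mu(x).
\end{equation*}

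\textbf{Step 2 (insert cancellation).} Let $c_{I_1}$, $c_{J_1}$ be the centers of $I_1$ and $J_1$. Using $\int f_{I_1}\ud\mu_n=\int f_{J_1}\ud\mu_m=0$, we may replace $K(x,y)$ by the ``full difference''
\begin{equation*}
  \Delta K(x,y):=K(x,y)-K(x,(c_{I_1},y_2))-K(x,(y_1,c_{J_1}))+K(x,(c_{I_1},c_{J_1}))
\end{equation*}
without changing the integral.

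\textbf{Step 3 (verify the H\"older hypothesis and apply it).} For $y_1\in I_1$ and $x_1\in I_2$ one has $|y_1-c_{I_1}|\le\ell(I_1)$, and the separation together with $\ell(I_1)\le\ell(I_2)$ gives
\begin{equation*}
  |x_1-y_1|\ge d(I_1,I_2)>2\ell(I_1)^{\gamma_n}\ell(I_2)^{1-\gamma_n}\ge 2\ell(I_1),
\end{equation*}
so $|y_1-c_{I_1}|\le |x_1-y_1|/2$. The symmetric inequality holds for $y_2,c_{J_1},x_2$. The full H\"older condition of Assumption (Full standard estimates) then gives
\begin{equation*}
  |\Delta K(x,y)|\lesssim
  \frac{\ell(I_1)^{\alpha}}{|x_1-y_1|^{\alpha}\lambda_n(x_1,|x_1-y_1|)}\cdot
  \frac{\ell(J_1)^{\beta}}{|x_2-y_2|^{\beta}\lambda_m(x_2,|x_2-y_2|)}.
\end{equation*}

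\textbf{Step 4 (comparability of the kernel quantities on the support).} For the relevant $x_1,y_1$ we have $|x_1-y_1|\sim D(I_1,I_2)$, and the symmetry property of $\lambda_n$ (the convention made after Definition of upper doubling measures) yields $\lambda_n(x_1,|x_1-y_1|)\sim\lambda_n(z,D(I_1,I_2))$ for any $z\in I_1$, hence also $\gtrsim\sup_{z\in I_1}\lambda_n(z,D(I_1,I_2))$. The analogous statements hold for the second parameter.

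\textbf{Step 5 (integrate).} Plugging these bounds in and integrating in $y_1,y_2,x_1,x_2$ separately, Cauchy--Schwarz applied to each factor gives $\int_{I_1}|f_{I_1}|\ud\mu_n\le\mu_n(I_1)^{1/2}$, and similarly for $f_{J_1},g_{I_2},g_{J_2}$. The product of the resulting four factors is exactly
\begin{equation*}
  \frac{\ell(I_1)^{\alpha}\,\mu_n(I_1)^{1/2}\mu_n(I_2)^{1/2}}{D(I_1,I_2)^{\alpha}\sup_{z\in I_1}\lambda_n(z,D(I_1,I_2))}\cdot
  \frac{\ell(J_1)^{\beta}\,\mu_m(J_1)^{1/2}\mu_m(J_2)^{1/2}}{D(J_1,J_2)^{\beta}\sup_{w\in J_1}\lambda_m(w,D(J_1,J_2))},
\end{equation*}
and since $\ell(I_1)\le\ell(I_2)$ we have $\ell(I_1)^{\alpha}\le\ell(I_1)^{\alpha/2}\ell(I_2)^{\alpha/2}$ (and similarly for $J$), so the bound is at most $A_{I_1I_2}^{\textup{sep}}A_{J_1J_2}^{\textup{sep}}$, as required.

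There is no serious obstacle here: the only slightly delicate point is checking that the geometric separation plus $\ell(I_1)\le\ell(I_2)$ really does guarantee $|y_1-c_{I_1}|\le|x_1-y_1|/2$ so that the H\"older estimate is legal, and managing the supremum inside $\lambda_n$ via the symmetry convention -- both handled in Steps 3 and 4.
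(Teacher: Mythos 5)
Your overall strategy is the same as the paper's: full kernel representation, insertion of the double cancellation through the two mean-zero factors, the bi-parameter H\"older estimate, and Cauchy--Schwarz in each variable; Steps 1--3 are fine. The gap is in Step 4: the claim that $|x_1-y_1|\sim D(I_1,I_2)$ for $y_1\in I_1$, $x_1\in I_2$ is false. One only has $d(I_1,I_2)\le|x_1-y_1|\lesssim D(I_1,I_2)$, and the hypotheses allow $d(I_1,I_2)$ to be far smaller than $D(I_1,I_2)$: the separation condition only forces $d(I_1,I_2)>2\ell(I_1)^{\gamma_n}\ell(I_2)^{1-\gamma_n}$, which is $\ll\ell(I_2)\sim D(I_1,I_2)$ once $\ell(I_1)\ll\ell(I_2)$. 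Since $\lambda_n$ is non-decreasing in the radius, both $|x_1-y_1|^{\alpha}$ and $\lambda_n(\cdot,|x_1-y_1|)$ can then be genuinely smaller than $D(I_1,I_2)^{\alpha}$ and $\lambda_n(\cdot,D(I_1,I_2))$, so the lower bound $|x_1-y_1|^{\alpha}\lambda_n(x_1,|x_1-y_1|)\gtrsim D(I_1,I_2)^{\alpha}\sup_{z\in I_1}\lambda_n(z,D(I_1,I_2))$ that Step 5 relies on does not hold, and your final display is not justified.

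What is actually needed, and what the paper does following the proof of \cite[Lemma 6.2]{HM}, is to keep the raw estimate $\ell(I_1)^{\alpha}d(I_1,I_2)^{-\alpha}\lambda_n(z,d(I_1,I_2))^{-1}$ and then prove
\begin{displaymath}
\frac{\ell(I_1)^{\alpha}}{d(I_1,I_2)^{\alpha}\,\lambda_n(z,d(I_1,I_2))}\lesssim\frac{\ell(I_1)^{\alpha/2}\ell(I_2)^{\alpha/2}}{D(I_1,I_2)^{\alpha}\,\lambda_n(z,D(I_1,I_2))}
\end{displaymath}
by splitting into the cases $d(I_1,I_2)\ge\ell(I_2)$ (where indeed $d\sim D$ and doubling of $\lambda_n$ finishes, and your $\ell(I_1)^{\alpha}\le\ell(I_1)^{\alpha/2}\ell(I_2)^{\alpha/2}$ suffices) and $d(I_1,I_2)<\ell(I_2)$ (where $D\sim\ell(I_2)$ and one must pay both $(D/d)^{\alpha}$ and $\lambda_n(z,D)/\lambda_n(z,d)\lesssim(D/d)^{d_{\lambda_n}}$). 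In the second case the separation lower bound gives $D/d\lesssim(\ell(I_2)/\ell(I_1))^{\gamma_n}$, and the losses are absorbed into the available factor $(\ell(I_2)/\ell(I_1))^{\alpha/2}$ precisely because $\gamma_n(\alpha+d_{\lambda_n})\le\alpha/2$, i.e.\ because $\gamma_n=\alpha/(2d_{\lambda_n}+2\alpha)$. This is the one genuinely quantitative point of the lemma --- it is where the symmetric gain $\ell(I_1)^{\alpha/2}\ell(I_2)^{\alpha/2}$ needed for the Schur-type summation in Proposition~\ref{l2s} comes from --- and your write-up replaces it with an incorrect comparability.
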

\begin{proof}
We write $\langle T(f_{I_1} \otimes f_{J_1}), g_{I_2} \otimes g_{J_2}\rangle$ as the integral
\begin{displaymath}
\int_{I_1} \int_{J_1} \int_{I_2} \int_{J_2} K(x,y) f_{I_1}(y_1) f_{J_1}(y_2)  g_{I_2}(x_1)  g_{J_2}(x_2) \,d\mu_m(x_2)\,d\mu_n(x_1)\,d\mu_m(y_2) \, d\mu_n(y_1)
\end{displaymath}
and then replace $K(x,y)$ by
\begin{displaymath}
K(x,y) - K(x, (y_1, w)) - K(x, (z, y_2)) + K(x, (z,w)),
\end{displaymath}
where $z \in I_1$ and $w \in J_1$ are arbitrary. The replacement can be done inside the integral, since  $\int f_{I_1}\,d\mu_n = \int f_{J_1}\,d\mu_m = 0$.

Next, one notes that
\begin{displaymath}
|y_1 - z| \le \ell(I_1) \le \ell(I_1)^{\gamma_n}\ell(I_2)^{1-\gamma_n} \le d(I_1,I_2)/2 \le |x_1-z|/2,
\end{displaymath}
and that similarly $|y_2-w| \le |x_2-w|/2$. Using the H\"older condition of $K$ this yields
\begin{align*}
&|K(x,y) - K(x, (y_1, w)) - K(x, (z, y_2)) + K(x, (z,w))|  \\
&\lesssim \frac{|y_1-z|^{\alpha}}{|x_1-z|^{\alpha} \lambda_n(z, |x_1-z|)} \frac{|y_2-w|^{\beta}}{|x_2-w|^{\beta} \lambda_m(w, |x_2-w|)}  \\
&\le \frac{\ell(I_1)^{\alpha}}{d(I_1,I_2)^{\alpha} \lambda_n(z, d(I_1,I_2))} \frac{\ell(J_1)^{\beta}}{d(J_1,J_2)^{\beta} \lambda_m(w, d(J_1,J_2))}. 
\end{align*}
Splitting into cases $d(I_1,I_2) \ge \ell(I_2)$ and $d(I_1,I_2) < \ell(I_2)$ one can show precisely like in the proof of \cite[Lemma 6.2]{HM} that
\begin{displaymath}
\frac{\ell(I_1)^{\alpha}}{d(I_1,I_2)^{\alpha} \lambda_n(z, d(I_1,I_2))} \lesssim \frac{\ell(I_1)^{\alpha/2}\ell(I_2)^{\alpha/2}}{D(I_1,I_2)^{\alpha} \lambda_n(z, D(I_1,I_2))}.
\end{displaymath}
The result of the lemma readily follows.
\end{proof}

We recall \cite[Proposition 6.3]{HM}:
\begin{prop}\label{l2s}
There holds
\begin{displaymath}
\sum_{\ell(I_1) \le \ell(I_2)} A^{\textup{sep}}_{I_1I_2} x_{I_1} y_{I_2} \lesssim \Big( \sum_{I_1} x_{I_1}^2 \Big)^{1/2}  \Big( \sum_{I_2} y_{I_2}^2 \Big)^{1/2}
\end{displaymath}
for $x_{I_1}, y_{I_2} \ge 0$. In particular, there holds that
\begin{displaymath}
\Big( \sum_{I_2} \Big[ \sum_{I_1:\,\ell(I_1) \le \ell(I_2)} A^{\textup{sep}}_{I_1I_2} x_{I_1} \Big]^2\Big)^{1/2} \lesssim \Big( \sum_{I_1} x_{I_1}^2 \Big)^{1/2}.
\end{displaymath}
\end{prop}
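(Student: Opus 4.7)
The plan is to prove the displayed bilinear inequality by a Schur test with the natural weights $u(I_1) = \mu_n(I_1)^{1/2}$ and $v(I_2) = \mu_n(I_2)^{1/2}$, and then derive the ``in particular'' statement by duality. Thus I must verify the two one-sided estimates
\begin{equation*}
  \sum_{I_1:\,\ell(I_1)\le\ell(I_2)} A^{\textup{sep}}_{I_1I_2}\,\mu_n(I_1)^{1/2}\lesssim\mu_n(I_2)^{1/2},\qquad
  \sum_{I_2:\,\ell(I_2)\ge\ell(I_1)} A^{\textup{sep}}_{I_1I_2}\,\mu_n(I_2)^{1/2}\lesssim\mu_n(I_1)^{1/2}.
\end{equation*}
Once these are in place, the standard Schur bound yields $\sum A^{\textup{sep}}_{I_1I_2}x_{I_1}y_{I_2}\lesssim (\sum x_{I_1}^2)^{1/2}(\sum y_{I_2}^2)^{1/2}$.

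For the first of these, I fix $I_2$ and organize the $I_1$'s by two discrete parameters: the relative scale $k\ge 0$ with $\ell(I_1)=2^{-k}\ell(I_2)$, and the relative distance $j\ge 0$ with either $d(I_1,I_2)<\ell(I_2)$ (call this $j=0$) or $d(I_1,I_2)\sim 2^j\ell(I_2)$ for $j\ge 1$. In each such ``annular slice'' one has $D(I_1,I_2)\sim 2^j\ell(I_2)$, and all the $I_1$'s in the slice are contained in a single ball $B(z,C2^j\ell(I_2))$ with $z\in I_2$; by disjointness of the $I_1$'s at the fixed scale and the upper doubling property together with the symmetry $\lambda_n(\cdot,r)\sim\lambda_n(\cdot',r)$ on balls of radius $r$, the total $\mu_n$-mass of these $I_1$'s is $\lesssim\sup_{z\in I_1}\lambda_n(z,D(I_1,I_2))$. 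Plugging this into the definition of $A^{\textup{sep}}_{I_1I_2}$ the dominating function cancels, leaving a clean contribution of
\begin{equation*}
  \mu_n(I_2)^{1/2}\Big(\frac{\ell(I_1)}{\ell(I_2)}\Big)^{\alpha/2}\Big(\frac{\ell(I_2)}{D(I_1,I_2)}\Big)^{\alpha}\sim\mu_n(I_2)^{1/2}\cdot 2^{-k\alpha/2}\cdot 2^{-j\alpha},
\end{equation*}
and summing the two geometric series in $j,k\ge 0$ produces $\mu_n(I_2)^{1/2}$ as desired.

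For the dual estimate I fix $I_1$ and, analogously, group the $I_2$'s by $k=\log_2(\ell(I_2)/\ell(I_1))\ge 0$ and $j\ge 0$ measuring the distance at the scale of $I_2$. Here the combinatorial counting is even easier: at each fixed $(k,j)$ there are only $O(2^{jn})$ cubes $I_2$, all sitting in a ball of radius $\sim 2^{j+k}\ell(I_1)$ around $I_1$, so $\sum\mu_n(I_2)\lesssim\sup_{z\in I_1}\lambda_n(z,D)$ again by upper doubling and the symmetry of $\lambda_n$. The same bookkeeping yields a contribution $\lesssim\mu_n(I_1)^{1/2}2^{-k\alpha/2}2^{-j\alpha}$ per slice, and the two geometric series close. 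Having both Schur estimates with constant $O(1)$, the bilinear inequality follows.

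Finally, the ``in particular'' assertion is just the dual formulation: the inequality $\sum A^{\textup{sep}}_{I_1I_2}x_{I_1}y_{I_2}\lesssim\|x\|_{\ell^2}\|y\|_{\ell^2}$ means the linear map $(x_{I_1})\mapsto\bigl(\sum_{I_1} A^{\textup{sep}}_{I_1I_2}x_{I_1}\bigr)_{I_2}$ is bounded on $\ell^2$, which is the second displayed inequality. The main delicate point I foresee is the passage from a sum $\sum\mu_n(I_1)$ over disjoint subcubes to $\lambda_n(z,D(I_1,I_2))$: one must be careful to use the symmetry property $\lambda_n(x,r)\le C\lambda_n(y,r)$ for $|x-y|\le r$ to move the base point from the ball's center into the cubes where the supremum is taken, so that the factor $\sup_{z\in I_1}\lambda_n(z,D(I_1,I_2))$ in the denominator really does cancel the upper bound for the total measure. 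Everything else is a standard two-index geometric summation driven by the $\alpha$-Hölder decay.
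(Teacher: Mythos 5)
Your proof is correct: the Schur test with weights $\mu_n(I_1)^{1/2}$, $\mu_n(I_2)^{1/2}$, the slicing by relative scale $2^{-k}$ and relative distance $2^{j}$, and the cancellation of $\sup_{z\in I_1}\lambda_n(z,D(I_1,I_2))$ against the total measure of the disjoint cubes in a ball of radius $\sim D$ (via upper doubling and the symmetry $\lambda_n(x,r)\le C\lambda_n(y,r)$ for $|x-y|\le r$) all go through, and the ``in particular'' statement does follow by duality. The paper itself gives no proof here -- it recalls the statement from \cite[Proposition 6.3]{HM} -- and the argument there is exactly this kind of Schur/Cauchy--Schwarz estimate with geometric decay $2^{-k\alpha/2}2^{-j\alpha}$, so your proposal is essentially the standard proof.
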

We are to bound
\begin{align*}
\mathop{\sum_{\eta_1, \eta_2}}_{\kappa_1, \kappa_2}   \mathop{\sum_{\ell(I_1) \le  \ell(I_2)}}_{d(I_1, I_2) > 2\ell(I_1)^{\gamma_n}\ell(I_2)^{1-\gamma_n}} &
 \mathop{\sum_{\ell(J_1) \le \ell(J_2)}}_{d(J_1, J_2) > 2\ell(J_1)^{\gamma_m}\ell(J_2)^{1-\gamma_m}} |\langle f, h_{I_1, \eta_1} \otimes u_{J_1, \kappa_1}\rangle|\\
 &\times  |\langle g, h_{I_2, \eta_2} \otimes u_{J_2, \kappa_2}\rangle|
 |\langle T(h_{I_1, \eta_1} \otimes u_{J_1, \kappa_1}),  h_{I_2, \eta_2} \otimes u_{J_2, \kappa_2}\rangle|.
\end{align*}
In general, we need to split
\begin{displaymath}
\mathop{\sum_{\eta_1, \eta_2}}_{\kappa_1, \kappa_2} = \mathop{\mathop{\mathop{\sum_{\eta_1 = 0}}_{\kappa_1 = 0}}}_{\eta_2, \kappa_2} + \mathop{\mathop{\mathop{\sum_{\eta_1 = 0}}_{\kappa_1 \ne 0}}}_{\eta_2, \kappa_2}
+ \mathop{\mathop{\mathop{\sum_{\eta_1 \ne 0}}_{\kappa_1 = 0}}}_{\eta_2, \kappa_2} + \mathop{\mathop{\mathop{\sum_{\eta_1 \ne 0}}_{\kappa_1 \ne 0}}}_{\eta_2, \kappa_2}.
\end{displaymath}
However, in this separated/separated sum only the fourth summation appears. Indeed, say $\eta_1 = 0$. Then $2^{\ell} = \ell(I_1) \le \ell(I_2) \le 2^{\ell}$ i.e. $\ell(I_1) = \ell(I_2) = 2^{\ell}$.
If the cubes $I_1$ and $I_2$ actually contribute to the sum, one must have $I_1 \cap [-2^{\ell}, 2^{\ell}]^n \ne \emptyset$ and $I_2  \cap [-2^{\ell}, 2^{\ell}]^n \ne \emptyset$. But this means that
$d(I_1, I_2) \le 2^{\ell+1} = 2\ell(I_1)^{\gamma_n}\ell(I_2)^{1-\gamma_n}$.

The main term, where $\eta_1 \ne 0$ and $\kappa_1 \ne 0$, is the only one that remains and can now be handled. In that sum Lemma \ref{ccss} gives
\begin{displaymath}
|\langle T(h_{I_1, \eta_1} \otimes u_{J_1, \kappa_1}),  h_{I_2, \eta_2} \otimes u_{J_2, \kappa_2}\rangle|  \lesssim A^{\textup{sep}}_{I_1I_2}A^{\textup{sep}}_{J_1J_2}.
\end{displaymath}
With fixed  $\eta_1 \ne 0$, $\kappa_1 \ne 0$, $\eta_2$ and $\kappa_2$ we get from Proposition \ref{l2s} that
\begin{align*}
& \sum_{\ell(I_1) \le \ell(I_2)} \sum_{\ell(J_1) \le \ell(J_2)} A^{\textup{sep}}_{I_1I_2}A^{\textup{sep}}_{J_1J_2} |\langle f, h_{I_1, \eta_1} \otimes u_{J_1, \kappa_1}\rangle| |\langle g, h_{I_2, \eta_2} \otimes u_{J_2, \kappa_2}\rangle| \\
&\lesssim \sum_{\ell(I_1) \le \ell(I_2)} A^{\textup{sep}}_{I_1I_2} \Big( \sum_{J_1}  |\langle f, h_{I_1, \eta_1} \otimes u_{J_1, \kappa_1}\rangle|^2 \Big)^{1/2} \Big(\sum_{J_2}  |\langle g, h_{I_2, \eta_2} \otimes u_{J_2, \kappa_2}\rangle|^2 \Big)^{1/2} \\
&\lesssim \Big(\sum_{I_1} \sum_{J_1}  |\langle f, h_{I_1, \eta_1} \otimes u_{J_1, \kappa_1}\rangle|^2 \Big)^{1/2} \Big( \sum_{I_2} \sum_{J_2} |\langle g, h_{I_2, \eta_2} \otimes u_{J_2, \kappa_2}\rangle|^2 \Big)^{1/2} \\
&\lesssim \|f\|_{L^2(\mu)} \|g\|_{L^2(\mu)}.
\end{align*}

\subsection{Separated/nested}
We begin by remarking that in this sum we automatically have $\eta_1 \ne 0$ and $\kappa_1 \ne 0$.
We write $\langle T(h_{I_1, \eta_1} \otimes u_{J_1, \kappa_1}),  h_{I_2, \eta_2} \otimes u_{J_2, \kappa_2}\rangle$ as the sum
\begin{align*}
[\langle T(h_{I_1, \eta_1} \otimes u_{J_1, \kappa_1}),  h_{I_2, \eta_2} \otimes u_{J_2, \kappa_2}\rangle
- \langle &u_{J_2,\kappa_2} \rangle_{J_1} \langle T(h_{I_1, \eta_1} \otimes u_{J_1, \kappa_1}),  h_{I_2, \eta_2} \otimes 1\rangle] \\
&+ \langle u_{J_2,\kappa_2} \rangle_{J_1} \langle T(h_{I_1, \eta_1} \otimes u_{J_1, \kappa_1}),  h_{I_2, \eta_2} \otimes 1\rangle.
\end{align*}
We start by dealing with the sum having the first term as the matrix element. Let $J_{2,1} \in \textup{ch}(J_2)$ be such that $J_1 \subset J_{2,1}$.
Note that $\langle u_{J_2,\kappa_2} \rangle_{J_1} = \langle u_{J_2,\kappa_2} \rangle_{J_{2,1}}$. Therefore, we have that
\begin{displaymath}
\langle T(h_{I_1, \eta_1} \otimes u_{J_1, \kappa_1}),  h_{I_2, \eta_2} \otimes u_{J_2, \kappa_2}\rangle
- \langle u_{J_2,\kappa_2} \rangle_{J_1} \langle T(h_{I_1, \eta_1} \otimes u_{J_1, \kappa_1}),  h_{I_2, \eta_2} \otimes 1\rangle
\end{displaymath}
equals
\begin{align*}
-\langle u_{J_2,\kappa_2} \rangle_{J_{2,1}}  \langle T(h_{I_1, \eta_1}& \otimes u_{J_1, \kappa_1}),  h_{I_2, \eta_2} \otimes \chi_{J_{2,1}^c}\rangle \\
&+ \mathop{\sum_{J_2' \in \textup{ch}(J_2)}}_{J_2' \subset J_2 \setminus J_{2,1}} \langle T(h_{I_1, \eta_1} \otimes u_{J_1, \kappa_1}),  h_{I_2, \eta_2} \otimes u_{J_2, \kappa_2}\chi_{J_2'}\rangle.
\end{align*}
As $J_1$ is good, $J_1 \subset J_{2,1}$ and $\ell(J_1) \le 2^{-r}\ell(J_{2,1})$, we have
\begin{displaymath}
d(J_1, J_{2,1}^c) \ge 4\ell(J_1)^{\gamma_m}\ell(J_{2,1})^{1-\gamma_m} > 2\ell(J_1)^{\gamma_m}\ell(J_2)^{1-\gamma_m}.
\end{displaymath}
\begin{lem}\label{mixedsep1}
There holds
\begin{displaymath}
|\langle u_{J_2,\kappa_2} \rangle_{J_{2,1}}  \langle T(h_{I_1, \eta_1} \otimes u_{J_1, \kappa_1}),  h_{I_2, \eta_2} \otimes \chi_{J_{2,1}^c}\rangle| \lesssim
A_{I_1I_2}^{\textup{sep}}A_{J_1J_2}^{\textup{in}},
\end{displaymath}
where
\begin{displaymath}
A_{J_1J_2}^{\textup{in}} = \Big( \frac{\ell(J_1)}{\ell(J_2)}\Big)^{\beta/2} \Big( \frac{\mu_m(J_1)}{\mu_m(J_{2,1})}\Big)^{1/2}.
\end{displaymath}
\end{lem}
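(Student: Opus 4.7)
The plan is to use the full kernel representation together with the double cancellation of $h_{I_1,\eta_1}$ and $u_{J_1,\kappa_1}$ (both are cancellative since $\eta_1\ne 0$ and $\kappa_1\ne 0$), apply the full H\"older estimate of $K$, and carefully estimate the spatial integrals. First I would extract the prefactor: by Cauchy--Schwarz and $\|u_{J_2,\kappa_2}\|_{L^2(\mu_m)}=1$,
\begin{equation*}
  |\langle u_{J_2,\kappa_2}\rangle_{J_{2,1}}|=\mu_m(J_{2,1})^{-1}\Big|\int_{J_{2,1}}u_{J_2,\kappa_2}\,d\mu_m\Big|\leq \mu_m(J_{2,1})^{-1/2}.
\end{equation*}
The supports of $h_{I_1,\eta_1}\otimes u_{J_1,\kappa_1}$ and $h_{I_2,\eta_2}\otimes\chi_{J_{2,1}^c}$ are disjoint in both coordinates (the first by separation, the second since $J_1\subset J_{2,1}$), so the full kernel representation applies.

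Next, using the vanishing $\mu_n$-integral of $h_{I_1,\eta_1}$ and the vanishing $\mu_m$-integral of $u_{J_1,\kappa_1}$, we may replace $K(x,y)$ inside the resulting quadruple integral by $K(x,y)-K(x,(y_1,w))-K(x,(z,y_2))+K(x,(z,w))$ for arbitrary $z\in I_1$, $w\in J_1$. For such $z,w$ and $y_1\in I_1$, the separation hypothesis gives $|y_1-z|\leq\ell(I_1)\leq|x_1-z|/2$; and for $y_2\in J_1$, $x_2\in J_{2,1}^c$, the goodness of $J_1$ (yielding $d(J_1,J_{2,1}^c)\geq 4\ell(J_1)^{\gamma_m}\ell(J_{2,1})^{1-\gamma_m}\geq 2\ell(J_1)$) gives $|y_2-w|\leq|x_2-w|/2$. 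Hence the mixed H\"older condition of the kernel yields
\begin{equation*}
  |K(x,y)-\cdots+K(x,(z,w))|\lesssim\frac{\ell(I_1)^{\alpha}}{d(I_1,I_2)^{\alpha}\lambda_n(z,d(I_1,I_2))}\cdot\frac{\ell(J_1)^{\beta}}{|x_2-w|^{\beta}\lambda_m(w,|x_2-w|)}.
\end{equation*}

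Now I would carry out the four integrations. The factors involving $h_{I_1,\eta_1}$, $u_{J_1,\kappa_1}$ and $h_{I_2,\eta_2}$ contribute trivially the $L^1$-type bounds $\mu_n(I_1)^{1/2}$, $\mu_m(J_1)^{1/2}$ and $\mu_n(I_2)^{1/2}$ respectively (using $\|\cdot\|_{L^2}=1$ and Cauchy--Schwarz). For the remaining $x_2$-integration over $J_{2,1}^c$, the upper doubling property applied to dyadic annuli around $w\in J_1$ gives
\begin{equation*}
  \int_{J_{2,1}^c}\frac{d\mu_m(x_2)}{|x_2-w|^{\beta}\lambda_m(w,|x_2-w|)}\lesssim\frac{1}{d(J_1,J_{2,1}^c)^{\beta}}.
\end{equation*}
Using goodness of $J_1$ and $\gamma_m\leq 1/2$, together with $\ell(J_{2,1})\sim\ell(J_2)$, I obtain
\begin{equation*}
  \frac{\ell(J_1)^{\beta}}{d(J_1,J_{2,1}^c)^{\beta}}\lesssim\Big(\frac{\ell(J_1)}{\ell(J_{2,1})}\Big)^{(1-\gamma_m)\beta}\lesssim\Big(\frac{\ell(J_1)}{\ell(J_2)}\Big)^{\beta/2}.
\end{equation*}

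Finally, I would invoke the same splitting into the cases $d(I_1,I_2)\geq\ell(I_2)$ and $d(I_1,I_2)<\ell(I_2)$ as in the proof of Lemma~\ref{ccss} (and exploit the $|x-y|\leq r$ symmetry of $\lambda_n$ to pass from $\lambda_n(z,\cdot)$ to $\sup_{z'\in I_1}\lambda_n(z',\cdot)$) to recognize the $I$-side factor as $A^{\textup{sep}}_{I_1I_2}$. Combining with the prefactor $\mu_m(J_{2,1})^{-1/2}$ gives
\begin{equation*}
  |\langle u_{J_2,\kappa_2}\rangle_{J_{2,1}}\langle T(h_{I_1,\eta_1}\otimes u_{J_1,\kappa_1}),h_{I_2,\eta_2}\otimes\chi_{J_{2,1}^c}\rangle|\lesssim A^{\textup{sep}}_{I_1I_2}\Big(\frac{\mu_m(J_1)}{\mu_m(J_{2,1})}\Big)^{1/2}\Big(\frac{\ell(J_1)}{\ell(J_2)}\Big)^{\beta/2},
\end{equation*}
which is precisely $A^{\textup{sep}}_{I_1I_2}A^{\textup{in}}_{J_1J_2}$. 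The main technical point is the exponent manipulation relying on $\gamma_m\leq 1/2$ to convert the goodness exponent $(1-\gamma_m)\beta$ into the desired $\beta/2$; everything else is routine.
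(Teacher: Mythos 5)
Your proof is correct and follows essentially the same route as the paper: the full kernel representation, the double cancellation of $h_{I_1,\eta_1}$ and $u_{J_1,\kappa_1}$ to insert the second difference of $K$, verification of $|y_1-z|\le|x_1-z|/2$ and $|y_2-w|\le|x_2-w|/2$, the tail integral bound $\int_{J_{2,1}^c}|x_2-w|^{-\beta}\lambda_m(w,|x_2-w|)^{-1}\,d\mu_m(x_2)\lesssim d(J_1,J_{2,1}^c)^{-\beta}$, the goodness-based exponent manipulation via $\gamma_m\le 1/2$, and the prefactor bound $|\langle u_{J_2,\kappa_2}\rangle_{J_{2,1}}|\le\mu_m(J_{2,1})^{-1/2}$. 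One terminological slip: the displayed kernel estimate you use is the \emph{full} H\"older condition (as you correctly state at the outset), not the ``mixed H\"older and size condition,'' which is a different assumption in the paper; the estimate itself is the right one.
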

\begin{proof}
We write $\langle T(h_{I_1, \eta_1} \otimes u_{J_1, \kappa_1}),  h_{I_2, \eta_2} \otimes \chi_{J_{2,1}^c}\rangle$ as
the integral
\begin{displaymath}
\int_{I_1} \int_{J_1} \int_{I_2} \int_{J_{2,1}^c} K(x,y)h_{I_1, \eta_1}(y_1) u_{J_1, \kappa_1}(y_2)  h_{I_2, \eta_2}(x_1) \,d\mu_m(x_2)\,d\mu_n(x_1)\,d\mu_m(y_2) \, d\mu_n(y_1),
\end{displaymath}
and then, using the fact that $\eta_1 \ne 0$ and $\kappa_1 \ne 0$, replace $K(x,y)$ with
\begin{displaymath}
K(x,y) - K(x, (y_1, w)) - K(x, (z, y_2)) + K(x, (z,w)).
\end{displaymath}
Here $z \in I_1$ and $w \in J_1$ are arbitrary. Since $d(I_1, I_2) > 2\ell(I_1)^{\gamma_n}\ell(I_2)^{1-\gamma_n}$ and $d(J_1, J_{2,1}^c) > 2\ell(J_1)^{\gamma_m}\ell(J_2)^{1-\gamma_m}$, we have
$|y_1 - z| \le |x_1-z|/2$ and $|y_2-w| \le |x_2-w|/2$. Now, we have
\begin{align*}
|\langle T(h_{I_1, \eta_1} \otimes u_{J_1, \kappa_1}),  h_{I_2, \eta_2} \otimes \chi_{J_{2,1}^c}\rangle| \lesssim A&_{I_1I_2}^{\textup{sep}} 
\ell(J_1)^{\beta} \mu_m(J_1)^{1/2} \\ &\times \int_{\R^m \setminus B(w, d(J_1,J_{2,1}^c))} \frac{|x_2-w|^{-\beta}}{\lambda_m(w, |x_2-w|)}\,d\mu_m(x_2),
\end{align*}
where
\begin{displaymath}
\int_{\R^m \setminus B(w, d(J_1,J_{2,1}^c))} \frac{|x_2-w|^{-\beta}}{\lambda_m(w, |x_2-w|)}\,d\mu_m(x_2) \lesssim d(J_1,J_{2,1}^c)^{-\beta} \lesssim \ell(J_1)^{-\beta/2}\ell(J_2)^{-\beta/2}.
\end{displaymath}
Noting that $|\langle u_{J_2,\kappa_2} \rangle_{J_{2,1}}| \le \mu_m(J_{2,1})^{-1/2}$ finishes the proof.
\end{proof}
\begin{lem}\label{mixedsep2}
If $J_2' \in \textup{ch}(J_2)$ and $J_2' \subset J_2 \setminus J_{2,1}$, then
\begin{displaymath}
|\langle T(h_{I_1, \eta_1} \otimes u_{J_1, \kappa_1}),  h_{I_2, \eta_2} \otimes u_{J_2, \kappa_2}\chi_{J_2'}\rangle| \lesssim A_{I_1I_2}^{\textup{sep}}A_{J_1J_2}^{\textup{in}}.
\end{displaymath}
\end{lem}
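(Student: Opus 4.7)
The plan is to follow the proof of Lemma~\ref{mixedsep1}, adapting it by replacing the role of $\chi_{J_{2,1}^c}$ there with $u_{J_2,\kappa_2}\chi_{J_2'}$. Since $u_{J_2,\kappa_2}$ is constant on each child of $J_2$, we have $u_{J_2,\kappa_2}\chi_{J_2'}=c_{J_2'}\chi_{J_2'}$ with $c_{J_2'}:=\langle u_{J_2,\kappa_2}\rangle_{J_2'}$, so the inner product factors as $c_{J_2'}\cdot\langle T(h_{I_1,\eta_1}\otimes u_{J_1,\kappa_1}),\,h_{I_2,\eta_2}\otimes\chi_{J_2'}\rangle$.

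First I would verify the geometric prerequisites. As in Lemma~\ref{mixedsep1}, the goodness of $J_1$ together with $J_1\subset J_{2,1}\subset J_2$ gives $D:=d(J_1,J_{2,1}^c)>4\ell(J_1)^{\gamma_m}\ell(J_{2,1})^{1-\gamma_m}>2\ell(J_1)^{\gamma_m}\ell(J_2)^{1-\gamma_m}\geq 2\ell(J_1)$. Because $J_2'\subset J_{2,1}^c$, for any $x_2\in J_2'$ and $y_2,w\in J_1$ one has $|y_2-w|\leq\ell(J_1)\leq|x_2-w|/2$; combined with the analogous bound in the first coordinate coming from the separation of $I_1,I_2$, this makes the joint H\"older estimate on $K$ applicable.

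Second, I would expand the inner product as an iterated integral, use $\int h_{I_1,\eta_1}\,d\mu_n=\int u_{J_1,\kappa_1}\,d\mu_m=0$ (which hold since $\eta_1,\kappa_1\neq 0$) to replace $K(x,y)$ by the symmetric difference $K(x,y)-K(x,(y_1,w))-K(x,(z,y_2))+K(x,(z,w))$ for fixed $z\in I_1$, $w\in J_1$, and apply the joint H\"older bound. The $\R^n$-factor produces $A_{I_1I_2}^{\textup{sep}}$ exactly as in Lemma~\ref{mixedsep1}, while the $\R^m$-factor yields
\begin{displaymath}
\ell(J_1)^\beta\mu_m(J_1)^{1/2}\int_{J_2'}\frac{d\mu_m(x_2)}{|x_2-w|^\beta\lambda_m(w,|x_2-w|)}.
\end{displaymath}
Since $J_2'\subset\R^m\setminus B(w,D)$, the $x_2$-integral is controlled via the same annular decomposition used in Lemma~\ref{mixedsep1} by $D^{-\beta}\lesssim\ell(J_1)^{-\beta/2}\ell(J_2)^{-\beta/2}$, and so
\begin{displaymath}
|\langle T(h_{I_1,\eta_1}\otimes u_{J_1,\kappa_1}),h_{I_2,\eta_2}\otimes\chi_{J_2'}\rangle|\lesssim A_{I_1I_2}^{\textup{sep}}\Big(\frac{\ell(J_1)}{\ell(J_2)}\Big)^{\beta/2}\mu_m(J_1)^{1/2}.
\end{displaymath}

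The final step is to bound $|c_{J_2'}|\lesssim\mu_m(J_{2,1})^{-1/2}$, after which multiplication with the above display yields the claim. The immediate Cauchy--Schwarz estimate $|c_{J_2'}|\leq\mu_m(J_2')^{-1/2}$ coming from $\|u_{J_2,\kappa_2}\|_{L^2(\mu_m)}=1$ is insufficient; matching the dependence on $\mu_m(J_{2,1})$ demanded by $A_{J_1J_2}^{\textup{in}}$ is the main obstacle and requires a case-by-case inspection of the two possible nonzero values of $u_{J_2,\kappa_2}$ on the sibling $J_2'\subset J_2\setminus J_{2,1}$, together with the property $\mu_m(\widehat{J}_2^{(k)})\sim \mu_m(J_2)$ guaranteed (up to $m$-dependent constants) by the ordering convention in the Haar basis construction.
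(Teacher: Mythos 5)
There is a genuine gap in your last step. The bound $|\langle u_{J_2,\kappa_2}\rangle_{J_2'}|\lesssim\mu_m(J_{2,1})^{-1/2}$ that your argument requires is false for general upper doubling measures. Inspecting the Haar construction: if $J_2'$ happens to be the distinguished child $J_{\kappa_2}$, then the value of $u_{J_2,\kappa_2}$ on $J_2'$ is $\big(\mu_m(\widehat J_{\kappa_2+1})/(\mu_m(\widehat J_{\kappa_2})\mu_m(J_{2}'))\big)^{1/2}\sim\mu_m(J_2')^{-1/2}$, and the ordering convention $\mu_m(\widehat J_k)\sim\mu_m(J_2)$ gives no control of $\mu_m(J_2')$ from below by $\mu_m(J_{2,1})$: the sibling $J_2'$ can carry arbitrarily little mass compared with $J_{2,1}$ (take $m=1$, $\mu_m(J_2')=\eps$, $\mu_m(J_{2,1})=1-\eps$; the ordering convention forces the small child to be $J_1$ in the indexing, and then $|c_{J_2'}|\sim\eps^{-1/2}\gg\mu_m(J_{2,1})^{-1/2}$). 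The root of the problem is that you discarded the mass of $J_2'$ too early: the tail bound $\int_{\R^m\setminus B(w,D)}|x_2-w|^{-\beta}\lambda_m(w,|x_2-w|)^{-1}d\mu_m\lesssim D^{-\beta}$, appropriate for the integration over the unbounded set $J_{2,1}^c$ in Lemma~\ref{mixedsep1}, throws away the factor $\mu_m(J_2')$ that is needed to absorb the large constant $|c_{J_2'}|$.

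The repair, which is the paper's route, is to keep that factor. Since $\|u_{J_2,\kappa_2}\chi_{J_2'}\|_{L^2(\mu_m)}\le 1$ and this function is supported on $J_2'$ with $d(J_1,J_2')\ge d(J_1,J_{2,1}^c)>2\ell(J_1)^{\gamma_m}\ell(J_2')^{1-\gamma_m}$, one can apply Lemma~\ref{ccss} directly (no zero mean is needed in the $g$-slots) to get the bound $A^{\textup{sep}}_{I_1I_2}\,(\ell(J_1)/\ell(J_2))^{\beta/2}\mu_m(J_1)^{1/2}\,\mu_m(J_2')^{1/2}/\lambda_m(w,\ell(J_2'))$; equivalently, in your setup, bound the integral over $J_2'$ pointwise by $\mu_m(J_2')\,D(J_1,J_2')^{-\beta}\lambda_m(w,D(J_1,J_2'))^{-1}$ (after the case-splitting trick of Lemma~\ref{ccss}) and use only the trivial $|c_{J_2'}|\le\mu_m(J_2')^{-1/2}$. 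Either way one is left with $\mu_m(J_2')^{1/2}/\lambda_m(w,\ell(J_2'))\lesssim\mu_m(J_{2,1})^{-1/2}$, which does hold, since both $\mu_m(J_2')$ and $\mu_m(J_{2,1})$ are dominated by $\lambda_m(w,\ell(J_2'))$ up to constants (using $w\in J_{2,1}$, the containments in balls of radius $\sim\ell(J_2)$, and the symmetry and doubling properties of $\lambda_m$).
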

\begin{proof}
Since $d(J_1, J_2') \ge d(J_1, J_{2,1}^c) > 2\ell(J_1)^{\gamma_m}\ell(J_2')^{1-\gamma_m}$, Lemma \ref{ccss} gives that
\begin{displaymath}
|\langle T(h_{I_1, \eta_1} \otimes u_{J_1, \kappa_1}),  h_{I_2, \eta_2} \otimes u_{J_2, \kappa_2}\chi_{J_2'}\rangle| \lesssim A_{I_1I_2}^{\textup{sep}}
\Big( \frac{\ell(J_1)}{\ell(J_2)}\Big)^{\beta/2} \mu_m(J_1)^{1/2} \frac{\mu_m(J_2')^{1/2}}{\lambda_m(w, \ell(J_2'))}
\end{displaymath}
with any $w \in J_1 \subset J_{2,1}$. It is easy to see that
\begin{displaymath}
\frac{\mu_m(J_2')^{1/2}}{\lambda_m(w, \ell(J_2'))} \lesssim \mu_m(J_{2,1})^{-1/2},
\end{displaymath}
which ends the proof of the lemma.
\end{proof}
We recall \cite[Lemma 7.4]{NTV}:

\begin{lem}\label{InSummation}
There holds
\begin{displaymath}
\mathop{\mathop{\sum_{J_1,\,J_2}}_{J_1 \subset J_{2,1} \in\, \textup{ch}(J_2)}}_{\ell(J_1) < 2^{-r}\ell(J_2)} A_{J_1J_2}^{\textup{in}} x_{J_1} y_{J_2} \lesssim  \Big( \sum_{J_1} x_{J_1}^2 \Big)^{1/2}  \Big( \sum_{J_2} y_{J_2}^2 \Big)^{1/2}
\end{displaymath}
for $x_{J_1}, y_{J_2} \ge 0$. In particular, there holds that
\begin{displaymath}
\Big( \sum_{J_2} \Big[ \mathop{\mathop{\sum_{J_1}}_{J_1 \subset J_{2,1} \in\, \textup{ch}(J_2)}}_{\ell(J_1) < 2^{-r}\ell(J_2)} A_{J_1J_2}^{\textup{in}} x_{J_1} \Big]^2\Big)^{1/2}  \lesssim \Big( \sum_{J_1} x_{J_1}^2 \Big)^{1/2}.
\end{displaymath}
\end{lem}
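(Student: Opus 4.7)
The plan is to organize the sum by the generation gap $j := \operatorname{gen}(J_1) - \operatorname{gen}(J_2)$, which satisfies $j > r$ by the summing condition. For such pairs one has $(\ell(J_1)/\ell(J_2))^{\beta/2} = 2^{-j\beta/2}$, providing a geometric decay that will ultimately be summed against a geometric series. The second variable factor $(\mu_m(J_1)/\mu_m(J_{2,1}))^{1/2}$ will be absorbed by two successive applications of Cauchy--Schwarz, the first at the level of a single child $J_{2,1}$ of $J_2$, and the second at the level of $J_2$.

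Concretely, for fixed $j > r$, fixed $J_2 \in \mathcal{D}_m'$, and a fixed child $J_{2,1} \in \textup{ch}(J_2)$, I would apply Cauchy--Schwarz to the inner sum over those good $J_1 \subset J_{2,1}$ with $\ell(J_1) = 2^{-j}\ell(J_2)$. Since such $J_1$ form a pairwise disjoint family in the single dyadic grid $\mathcal{D}_m$ contained in $J_{2,1}$, one has $\sum_{J_1} \mu_m(J_1) \le \mu_m(J_{2,1})$, so
\begin{equation*}
\sum_{J_1 \subset J_{2,1}} x_{J_1} \Big(\frac{\mu_m(J_1)}{\mu_m(J_{2,1})}\Big)^{1/2} \le \Big(\sum_{J_1 \subset J_{2,1}} x_{J_1}^2\Big)^{1/2}.
\end{equation*}
Summing in the at most $2^m$ children $J_{2,1}$ (another Cauchy--Schwarz) yields a bound by $\bigl(\sum_{J_1 \subset J_2,\,\ell(J_1)=2^{-j}\ell(J_2)} x_{J_1}^2\bigr)^{1/2}$. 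A third Cauchy--Schwarz over $J_2$, combined with the fact that for each $J_1 \in \mathcal{D}_m$ there is at most one $J_2 \in \mathcal{D}_m'$ at the prescribed generation with $J_1 \subset J_2$, gives
\begin{equation*}
\sum_{J_2} y_{J_2} \Big(\sum_{J_1 \subset J_2,\,\ell(J_1) = 2^{-j}\ell(J_2)} x_{J_1}^2\Big)^{1/2} \le \Big(\sum_{J_2} y_{J_2}^2\Big)^{1/2} \Big(\sum_{J_1} x_{J_1}^2\Big)^{1/2}.
\end{equation*}

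Multiplying by $2^{-j\beta/2}$ and summing in $j > r$ produces a convergent geometric series, which yields the stated bilinear bound. The second statement of the lemma is an immediate consequence of the first by duality in the $y_{J_2}$ variable. I do not expect a real obstacle here: the only points requiring care are (i) the disjointness of the $J_1$'s at a fixed generation inside $J_{2,1}$, which is used to control the $\mu_m$-ratios, and (ii) the cross-grid remark that for fixed $J_1 \in \mathcal{D}_m$ and fixed generation, there is a unique $J_2 \in \mathcal{D}_m'$ containing it, which is what prevents double counting in the final step.
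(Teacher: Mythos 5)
Your argument is correct: the decomposition by the generation gap $j>r$, the use of disjointness of the equal-sidelength cubes $J_1\subset J_{2,1}$ to get $\sum\mu_m(J_1)\le\mu_m(J_{2,1})$, the uniqueness of the containing $J_2$ at a fixed generation, and the final geometric summation in $2^{-j\beta/2}$ together give exactly the claimed Schur-type bound, and the second display does follow from the first by duality. The paper itself supplies no proof here (it quotes the lemma from \cite{NTV}), and the argument in that reference is essentially the one you give.
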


Combining this lemma with Proposition \ref{l2s} and the above estimates gives that
the summation
\begin{displaymath}
\mathop{\sum_{\ell(I_1) \le  \ell(I_2)}}_{d(I_1, I_2) > 2\ell(I_1)^{\gamma_n}\ell(I_2)^{1-\gamma_n}}
\mathop{\sum_{\ell(J_1) <  2^{-r}\ell(J_2)}}_{d(J_1, J_2) \le 2\ell(J_1)^{\gamma_m}\ell(J_2)^{1-\gamma_m}} 
\end{displaymath}
is dominated by $\|f\|_{L^2(\mu)} \|g\|_{L^2(\mu)}$, if we are using the modified matrix element
\begin{displaymath}
\langle T(h_{I_1, \eta_1} \otimes u_{J_1, \kappa_1}),  h_{I_2, \eta_2} \otimes u_{J_2, \kappa_2}\rangle
- \langle u_{J_2,\kappa_2} \rangle_{J_1} \langle T(h_{I_1, \eta_1} \otimes u_{J_1, \kappa_1}),  h_{I_2, \eta_2} \otimes 1\rangle.
\end{displaymath}

What is left to do is to consider the same summation but with the matrix element
\begin{displaymath}
\langle u_{J_2,\kappa_2} \rangle_{J_1} \langle T(h_{I_1, \eta_1} \otimes u_{J_1, \kappa_1}),  h_{I_2, \eta_2} \otimes 1\rangle.
\end{displaymath}
Recall that all the appearing cubes are good. In particular, $J_2$ is good and this is actually slightly problematic. However, for good $I_2$ and $J_2$ we can write
$\langle g, h_{I_2, \eta_2} \otimes u_{J_2, \kappa_2}\rangle = \langle g_{\textup{good}}, h_{I_2, \eta_2} \otimes u_{J_2, \kappa_2}\rangle$, and after this
we may add all the bad $J_2$ to the summation, since $\langle g_{\textup{good}}, h_{I_2, \eta_2} \otimes u_{J_2, \kappa_2}\rangle = 0$ if $J_2$ is bad. Moreover, in the non-trivial
case $\langle u_{J_2,\kappa_2} \rangle_{J_1} \ne 0$, we have $J_1 \cap J_2 \ne \emptyset$ so that we may remove the summing condition
$d(J_1, J_2) \le 2\ell(J_1)^{\gamma_m}\ell(J_2)^{1-\gamma_m}$.

In what follows we explicitly write the fact that $I_1, I_2$ and $J_1$ are good, since we need to highlight the fact that $J_2$ is not. 
For the $J_2$ summation we also write out the hidden summing condition $\ell(J_2) \le 2^{\ell}$.
For fixed $\eta_1, \kappa_1$ and $\eta_2$ we need to bound
\begin{align*}
\mathop{\mathop{\sum_{I_1, I_2 \textup{ good}}}_{\ell(I_1) \le  \ell(I_2)}}_{d(I_1, I_2) > 2\ell(I_1)^{\gamma_n}\ell(I_2)^{1-\gamma_n}} \sum_{J_1 \textup{ good}} 
\Big\langle &\mathop{\sum_{J_2}}_{2^r\ell(J_1) < \ell(J_2) \le 2^{\ell}}  \sum_{\kappa_2}  \langle g_{\textup{good}}, h_{I_2, \eta_2} \otimes u_{J_2, \kappa_2}\rangle u_{J_2, \kappa_2} \Big\rangle_{J_1} \\
&\times \langle f, h_{I_1, \eta_1} \otimes u_{J_1, \kappa_1}\rangle
\langle T(h_{I_1, \eta_1} \otimes u_{J_1, \kappa_1}),  h_{I_2, \eta_2} \otimes 1\rangle.
\end{align*}
We write $\langle g_{\textup{good}}, h_{I_2, \eta_2} \otimes u_{J_2, \kappa_2}\rangle = \langle g_{\textup{good}}^{I_2,\eta_2}, u_{J_2, \kappa_2}\rangle$, where
$g_{\textup{good}}^{I_2,\eta_2}(y) = \langle g_{\textup{good}}, h_{I_2, \eta_2}\rangle_1(y) = \int_{\R^n} g_{\textup{good}}(x,y) h_{I_2, \eta_2}(x)\,d\mu_n(x)$. Then we note that
\begin{displaymath}
\sum_{\kappa_2}  \langle g_{\textup{good}}^{I_2,\eta_2}, u_{J_2, \kappa_2}\rangle u_{J_2, \kappa_2}(y) 
= \left\{ \begin{array}{ll}
\Delta_{J_2}(g_{\textup{good}}^{I_2,\eta_2})(y), & \textup{if } \ell(J_2) < 2^{\ell}, \\
\Delta_{J_2}(g_{\textup{good}}^{I_2,\eta_2})(y) + E_{J_2}(g_{\textup{good}}^{I_2,\eta_2})(y), & \textup{if } \ell(J_2) = 2^{\ell}.
\end{array} \right.
\end{displaymath}
Therefore, the sum inside the average over $J_1$ equals the constant $\langle g_{\textup{good}}^{I_2,\eta_2} \rangle_{S(J_1)}$, where $S(J_1) \in \mathcal{D}_m'$ is the unique
cube for which $\ell(S(J_1)) = 2^r\ell(J_1)$ and $J_1 \subset S(J_1)$. Note that for this collapse it is important that $J_2$ is not restricted to good cubes. The cube $S(J_1)$ exists
since $J_1$ is good.

We are left to bound
\begin{align*}
\sum_{\eta_1, \kappa_1, \eta_2} \mathop{\mathop{\sum_{I_1, I_2 \textup{ good}}}_{\ell(I_1) \le  \ell(I_2)}}_{d(I_1, I_2) > 2\ell(I_1)^{\gamma_n}\ell(I_2)^{1-\gamma_n}} \sum_{J_2} 
\mathop{\mathop{\sum_{J_1 \textup{ good}}}_{J_1 \subset J_2}}_{\ell(J_1) = 2^{-r}\ell(J_2)} \langle g_{\textup{good}}^{I_2,\eta_2} \rangle_{J_2}\langle T(h_{I_1, \eta_1} \otimes& u_{J_1, \kappa_1}),  h_{I_2, \eta_2} \otimes 1\rangle \\
&\times \langle f, h_{I_1, \eta_1} \otimes u_{J_1, \kappa_1}\rangle.
\end{align*}
We note that this equals
\begin{displaymath}
\Big\langle \sum_{\eta_1, \kappa_1, \eta_2} \mathop{\mathop{\sum_{I_1, I_2 \textup{ good}}}_{\ell(I_1) \le  \ell(I_2)}}_{d(I_1, I_2) > 2\ell(I_1)^{\gamma_n}\ell(I_2)^{1-\gamma_n}}
h_{I_2, \eta_2} \otimes (\Pi^{\kappa_1}_{b_{I_1I_2}^{\eta_1\eta_2}})^*f_{I_1}^{\eta_1}, g_{\textup{good}}\Big\rangle,
\end{displaymath}
where $f_{I_1}^{\eta_1} = \langle f, h_{I_1,\eta_1} \rangle_1$, $b_{I_1I_2}^{\eta_1\eta_2} = \langle T^*(h_{I_2,\eta_2} \otimes 1), h_{I_1,\eta_1}\rangle_1$, and
\begin{displaymath}
\Pi^{\kappa_1}_a w = \sum_{J_2 \in \mathcal{D}_m'} \mathop{\mathop{\sum_{J_1 \in \mathcal{D}_{m, \textup{good}}}}_{J_1 \subset J_2}}_{\ell(J_1) = 2^{-r}\ell(J_2)} \langle w \rangle_{J_2} \langle a, u_{J_1,\kappa_1} \rangle u_{J_1,\kappa_1}, \qquad \kappa_1 \ne 0.
\end{displaymath}
Here, of course, the pairings and averages are taken with respect to the $\mu_m$ measure.

There holds
\begin{align*}
\Big\|&\Big\langle \sum_{\eta_1, \kappa_1, \eta_2} \mathop{\mathop{\sum_{I_1, I_2 \textup{ good}}}_{\ell(I_1) \le  \ell(I_2)}}_{d(I_1, I_2) > 2\ell(I_1)^{\gamma_n}\ell(I_2)^{1-\gamma_n}}
h_{I_2, \eta_2} \otimes (\Pi^{\kappa_1}_{b_{I_1I_2}^{\eta_1\eta_2}})^*f_{I_1}^{\eta_1}, g_{\textup{good}}\Big\rangle\Big\|_{L^2(\mu)} \\
&\le \sum_{\eta_1, \kappa_1, \eta_2} \Big\|  \mathop{\mathop{\sum_{I_1, I_2 \textup{ good}}}_{\ell(I_1) \le  \ell(I_2)}}_{d(I_1, I_2) > 2\ell(I_1)^{\gamma_n}\ell(I_2)^{1-\gamma_n}}
h_{I_2, \eta_2} \otimes (\Pi^{\kappa_1}_{b_{I_1I_2}^{\eta_1\eta_2}})^*f_{I_1}^{\eta_1}\Big\|_{L^2(\mu)} \|g\|_{L^2(\mu)} \\
 &= \sum_{\eta_1, \kappa_1, \eta_2} \Big(\sum_{I_2 \textup{ good}}\Big\|  \mathop{\mathop{\sum_{I_1\textup{ good}}}_{\ell(I_1) \le  \ell(I_2)}}_{d(I_1, I_2) > 2\ell(I_1)^{\gamma_n}\ell(I_2)^{1-\gamma_n}}
   (\Pi^{\kappa_1}_{b_{I_1I_2}^{\eta_1\eta_2}})^*f_{I_1}^{\eta_1}\Big\|_{L^2(\mu_m)}^2\Big)^{1/2} \|g\|_{L^2(\mu)} \\
 &\lesssim \sum_{\eta_1, \kappa_1, \eta_2} \Big( \sum_{I_2}\Big[  \mathop{\sum_{I_1:\, \ell(I_1) \le  \ell(I_2)}}_{d(I_1, I_2) > 2\ell(I_1)^{\gamma_n}\ell(I_2)^{1-\gamma_n}}
   \|b_{I_1I_2}^{\eta_1\eta_2}\|_{\textup{BMO}^2_3(\mu_m)} \|f_{I_1}^{\eta_1}\|_{L^2(\mu_m)}\Big]^2\Big)^{1/2} \|g\|_{L^2(\mu)},
\end{align*}
where we used the orthonormality of the functions $h_{I_2,\eta_2}\in L^2(\mu_n)$, and the following paraproduct boundedness result:

\begin{lem}
For any $M>1$, we have
\begin{equation*}
  \|\Pi^{\kappa}_a\|_{L^2(\mu_m)\to L^2(\mu_m)}
  \lesssim\|a\|_{\textup{BMO}^2_M(\mu_m)}.
\end{equation*}
\end{lem}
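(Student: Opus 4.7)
The plan is to proceed by duality and reduce to a dyadic Carleson embedding: I pair $\Pi^\kappa_a w$ against an arbitrary $v \in L^2(\mu_m)$, apply Cauchy--Schwarz so that the $v$-factor is controlled by orthonormality of the Haar basis, and handle the $w$-factor by verifying a Carleson packing condition for the coefficients $|\langle a, u_{J_1,\kappa}\rangle|^2$ using Lemma~\ref{lem:BMO1}.

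Concretely, because each $J_1 \in \mathcal{D}_{m,\textup{good}}$ is contained in a unique $J_2(J_1) \in \mathcal{D}_m'$ with $\ell(J_2(J_1)) = 2^r \ell(J_1)$, the pairing rewrites as
\begin{equation*}
\langle \Pi^\kappa_a w, v\rangle = \sum_{J_1 \in \mathcal{D}_{m,\textup{good}}} \langle w\rangle_{J_2(J_1)}\, \langle a, u_{J_1,\kappa}\rangle\, \langle u_{J_1,\kappa}, v\rangle,
\end{equation*}
and Cauchy--Schwarz together with orthonormality of $\{u_{J_1,\kappa}\}$ bounds this by
\begin{equation*}
\Big(\sum_{J_2 \in \mathcal{D}_m'} |\langle w\rangle_{J_2}|^2\, c_{J_2}\Big)^{1/2} \|v\|_{L^2(\mu_m)},
\qquad
c_{J_2} := \sum_{\substack{J_1 \in \mathcal{D}_{m,\textup{good}},\ J_1 \subset J_2\\ \ell(J_1) = 2^{-r}\ell(J_2)}} |\langle a, u_{J_1,\kappa}\rangle|^2.
\end{equation*}

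The key step is to verify that $\{c_{J_2}\}$ is a Carleson sequence. For every $J_0 \in \mathcal{D}_m'$, each good $J_1 \subset J_0$ with $\ell(J_1) \le 2^{-r}\ell(J_0)$ contributes to $c_{J_2}$ for exactly one $J_2 \subset J_0$ (namely $J_2(J_1)$, which lies in $J_0$ because $\ell(J_2(J_1)) \le \ell(J_0)$ and $J_2(J_1) \cap J_0 \supset J_1 \ne \emptyset$), hence
\begin{equation*}
\sum_{J_2 \subset J_0} c_{J_2}
= \sum_{\substack{J_1 \in \mathcal{D}_{m,\textup{good}},\ J_1 \subset J_0\\ \ell(J_1) \le 2^{-r}\ell(J_0)}} |\langle a, u_{J_1,\kappa}\rangle|^2
\lesssim \|a\|^2_{\textup{BMO}^2_M(\mu_m)}\, \mu_m(J_0)
\end{equation*}
by Lemma~\ref{lem:BMO1} with BMO parameter $M$. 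The standard dyadic Carleson embedding (using $|\langle w\rangle_{J_2}| \le M_{\mathcal{D}_m'}w$ on $J_2$ and a layer-cake decomposition) then yields $\sum_{J_2} |\langle w\rangle_{J_2}|^2 c_{J_2} \lesssim \|a\|^2_{\textup{BMO}^2_M(\mu_m)}\, \|w\|^2_{L^2(\mu_m)}$; the non-doubling setting causes no difficulty because the dyadic maximal operator is bounded on $L^2(\mu_m)$ for any locally finite Borel measure. The main obstacle is this packing step, where the exact-scale structure $\ell(J_1) = 2^{-r}\ell(J_2)$ in $\Pi^\kappa_a$ must be matched against the all-scales summation in Lemma~\ref{lem:BMO1}; the rest is routine duality and embedding.
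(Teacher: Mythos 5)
Your proof is correct, and it is essentially the argument the paper intends: the paper states this lemma without proof, but its proof of the bi-parameter analogue (the boundedness of $\Pi_b^{\eta_1\kappa_1}$ in Section~\ref{sec:nest}) uses exactly your chain of ideas — orthonormality of the Haar functions to reduce to $\sum_{J_2}|\langle w\rangle_{J_2}|^2 c_{J_2}$, the Carleson packing condition supplied by Lemma~\ref{lem:BMO1}, and the layer-cake/dyadic maximal function argument. Your initial dualization is harmless but redundant, since orthonormality gives $\|\Pi^{\kappa}_a w\|_{L^2(\mu_m)}^2=\sum_{J_2}|\langle w\rangle_{J_2}|^2c_{J_2}$ directly as an identity.
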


%Thus, it is enough to fix $\eta_1 \ne 0$, $\kappa_1 \ne 0$ and $\eta_2$, and then show that
%\begin{equation}\label{hpar}
%\Big\|  \mathop{\mathop{\sum_{I_1, I_2 \textup{ good}}}_{\ell(I_1) \le  \ell(I_2)}}_{d(I_1, I_2) > 2\ell(I_1)^{\gamma_n}\ell(I_2)^{1-\gamma_n}}
%h_{I_2, \eta_2} \otimes (\Pi^{\kappa_1}_{b_{I_1I_2}^{\eta_1\eta_2}})^*f_{I_1}^{\eta_1}\Big\|_{L^2(\mu)} \lesssim \|f\|_{L^2(\mu)}.
%\end{equation}
%
%The first thing is to investigate the BMO properties of $b_{I_1I_2}^{\eta_1\eta_2}$.

The BMO norms above are estimated as follows:
\begin{lem}
If $\eta_1 \ne 0$, $\ell(I_1) \le \ell(I_2)$ and $d(I_1, I_2) > 2\ell(I_1)^{\gamma_n}\ell(I_2)^{1-\gamma_n}$, then there holds that
\begin{displaymath}
\|b_{I_1I_2}^{\eta_1\eta_2}\|_{\textup{BMO}^2_{3}(\mu_m)} \lesssim A_{I_1I_2}^{\textup{sep}}
\end{displaymath}
for $b_{I_1I_2}^{\eta_1\eta_2} = \langle T^*(h_{I_2,\eta_2} \otimes 1), h_{I_1,\eta_1}\rangle_1$.
\end{lem}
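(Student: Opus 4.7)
The plan is to establish the BMO estimate by a duality argument. Writing $b:=b_{I_1I_2}^{\eta_1\eta_2}$, the defining identity
\[
\langle b,\psi\rangle_{L^2(\mu_m)}=\langle T(h_{I_1,\eta_1}\otimes\psi),h_{I_2,\eta_2}\otimes 1\rangle
\]
together with the duality characterization of $L^2$ oscillation (choosing $b_J=\langle b\rangle_J$) reduces matters to showing
\[
|\langle T(h_{I_1,\eta_1}\otimes\psi),h_{I_2,\eta_2}\otimes 1\rangle|\lesssim A^{\textup{sep}}_{I_1I_2}\mu_m(3J)^{1/2}
\]
for every cube $J\subset\R^m$ and every $\psi$ with $\supp\psi\subset J$, $\|\psi\|_{L^2(\mu_m)}\leq 1$, and $\int\psi\,d\mu_m=0$. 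One then splits $1=\chi_{3J}+\chi_{(3J)^c}$ on the $\R^m$-side and treats the two resulting contributions by two different kernel representations.

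For the near contribution $\langle T(h_{I_1,\eta_1}\otimes\psi),h_{I_2,\eta_2}\otimes\chi_{3J}\rangle$, the partial kernel representation applies since $I_1\cap I_2=\emptyset$, rewriting the pairing as a double integral against $K_{\psi,\chi_{3J}}(x_1,y_1)$. As $\eta_1\neq 0$, we subtract $K_{\psi,\chi_{3J}}(x_1,z)$ for some $z\in I_1$; the partial H\"older estimate is applicable because $|y_1-z|\leq\ell(I_1)\leq\ell(I_1)^{\gamma_n}\ell(I_2)^{1-\gamma_n}\leq|x_1-y_1|/2$ by the separation hypothesis. Combining the assumption $C(\psi,\chi_{3J})\lesssim\|\psi\|_{L^2(\mu_m)}\mu_m(3J)^{1/2}$ with $\|h_{I_j,\eta_j}\|_{L^1(\mu_n)}\leq\mu_n(I_j)^{1/2}$ and the two-case argument ($d(I_1,I_2)\geq\ell(I_2)$ vs.\ $<$) from the proof of Lemma~\ref{ccss} that converts $d(I_1,I_2)$ to $D(I_1,I_2)$ delivers the desired bound $A^{\textup{sep}}_{I_1I_2}\mu_m(3J)^{1/2}$.

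For the far contribution $\langle T(h_{I_1,\eta_1}\otimes\psi),h_{I_2,\eta_2}\otimes\chi_{(3J)^c}\rangle$, all four tensor factors now have pairwise disjoint supports in both coordinates, so the full kernel representation is available. Using both $\int h_{I_1,\eta_1}\,d\mu_n=0$ and $\int\psi\,d\mu_m=0$, we replace $K(x,y)$ with the fourfold difference $K(x,y)-K(x,(y_1,w))-K(x,(z,y_2))+K(x,(z,w))$ with $z\in I_1$ and $w=c_J$ the center of $J$. The joint H\"older estimate is valid: for $y_2\in J$ and $x_2\notin 3J$ we have $|y_2-w|\leq\tfrac{1}{2}\ell(J)\leq\tfrac{1}{2}|x_2-y_2|$. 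The $x_2$-integral $\int_{(3J)^c}|x_2-w|^{-\beta}\lambda_m(w,|x_2-w|)^{-1}\,d\mu_m(x_2)\lesssim\ell(J)^{-\beta}$ cancels the $\ell(J)^\beta$ from the H\"older bound, the $y_2$-integral $\int_J|\psi|\,d\mu_m\leq\mu_m(J)^{1/2}$ is absorbed into $\mu_m(3J)^{1/2}$, and the $(x_1,y_1)$-computation reproduces the factor $A^{\textup{sep}}_{I_1I_2}$ exactly as in the near contribution. The only delicate point is the choice $w=c_J$ (rather than arbitrary $w\in J$) enabling the joint H\"older estimate with the specific expansion factor $3$; apart from this, the argument is a direct adaptation of Lemma~\ref{ccss}.
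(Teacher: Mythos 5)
Your proposal is correct and follows essentially the same route as the paper: the duality reformulation with a mean-zero $\psi$ supported on $J$, the split $1=\chi_{3J}+\chi_{(3J)^c}$, the partial kernel representation with the $C(\psi,\chi_{3J})\lesssim\|\psi\|_{L^2(\mu_m)}\mu_m(3J)^{1/2}$ bound and one-variable H\"older estimate for the near part, and the full kernel representation with the double cancellation and fourfold H\"older difference for the far part. All the quantitative steps (the conversion of $d(I_1,I_2)$ to $D(I_1,I_2)$, the tail integral bound $\lesssim\ell(J)^{-\beta}$, and the admissibility of the H\"older conditions) are verified correctly.
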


\begin{proof}
We fix a cube $V \subset \R^m$ and a function $a$ such that spt$\,a \subset V$ and $\int a \,d\mu_m = 0$. We need to show that
\begin{align*}
|\langle T(h_{I_1,\eta_1}& \otimes a), h_{I_2,\eta_2} \otimes \chi_{3V}\rangle| \\&+ |\langle T(h_{I_1,\eta_1} \otimes a), h_{I_2,\eta_2} \otimes \chi_{(3V)^c}\rangle| \lesssim A_{I_1I_2}^{\textup{sep}}\|a\|_{L^2(\mu_m)} \mu_m(3V)^{1/2}.
\end{align*}
There holds with an arbitrary $z \in I_1$ that
\begin{align*}
&|\langle T(h_{I_1,\eta_1} \otimes a), h_{I_2,\eta_2} \otimes \chi_{3V}\rangle| \\
&= \Big| \int_{I_1} \int_{I_2} [K_{a, \chi_{3V}}(x_1,y_1) - K_{a, \chi_{3V}}(x_1,z)]h_{I_1,\eta_1}(y_1)h_{I_2,\eta_2}(x_1)\,d\mu_n(x_1)\,d\mu_n(y_1)\Big|,
\end{align*}
and this implies that
\begin{displaymath}
|\langle T(h_{I_1,\eta_1} \otimes a), h_{I_2,\eta_2} \otimes \chi_{3V}\rangle|  \lesssim A_{I_1I_2}^{\textup{sep}} C(a,\chi_{3V}) \lesssim A_{I_1I_2}^{\textup{sep}} \|a\|_{L^2(\mu_m)} \mu_m(3V)^{1/2}.
\end{displaymath}

An easy consequence of the H\"older estimates of the kernel $K$ is that
\begin{align*}
&|\langle T(h_{I_1,\eta_1} \otimes a), h_{I_2,\eta_2} \otimes \chi_{(3V)^c}\rangle| \\
&\lesssim A_{I_1I_2}^{\textup{sep}} \int_V \int_{(3V)^c} \frac{\ell(V)^{\beta}}{|x_2-c_V|^{\beta}\lambda_m(c_V, |x_2-c_V|)} |a(y_2)|\,d\mu_m(x_2)\,d\mu_m(y_2) \\
&\lesssim A_{I_1I_2}^{\textup{sep}} \|a\|_{L^2(\mu_m)} \mu_m(V)^{1/2} \cdot \ell(V)^{\beta} \int_{(3V)^c} \frac{|x_2-c_V|^{-\beta}}{\lambda_m(c_V, |x_2-c_V|)}\,d\mu_m(x_2) \\
&\lesssim A_{I_1I_2}^{\textup{sep}} \|a\|_{L^2(\mu_m)} \mu_m(V)^{1/2}.
\end{align*}
This completes the proof of the lemma.
\end{proof}

Thus
\begin{equation*}
\begin{split}
  \Big(\sum_{I_2}\Big[  &\mathop{\sum_{I_1:\, \ell(I_1) \le  \ell(I_2)}}_{d(I_1, I_2) > 2\ell(I_1)^{\gamma_n}\ell(I_2)^{1-\gamma_n}}
   \|b_{I_1I_2}^{\eta_1\eta_2}\|_{\textup{BMO}^2_3(\mu_m)} \|f_{I_1}^{\eta_1}\|_{L^2(\mu_m)}\Big]^2\Big)^{1/2}  \\
  &\lesssim  \Big( \sum_{I_2}\Big[  \sum_{I_1:\, \ell(I_1) \le  \ell(I_2)}
   A_{I_1 I_2}^{\textup{sep}} \|f_{I_1}^{\eta_1}\|_{L^2(\mu_m)}\Big]^2\Big)^{1/2}  \\
   &\lesssim \Big(\sum_{I_1}\|f_{I_1}^{\eta_1}\|_{L^2(\mu_m)}^2\Big)^{1/2} \leq \|f\|_{L^2(\mu)},
\end{split}
\end{equation*}
where we used Proposition~\ref{l2s} in the second-to-last step, and the orthonormality of the functions $h_{I_1}^{\eta_1}$, implicit in the notation $f_{I_1}^{\eta_1}=\langle f,h_{I_1}^{\eta_1}\rangle_1$, in the last step.

\subsection{Separated/adjacent}
Unlike in the full diagonal, no surgery is needed in this part of the summation. Simply by using the kernel
$K_{u_{J_1, \kappa_1}, u_{J_2, \kappa_2}}$ and the knowledge that $C(u_{J_1, \kappa_1}, u_{J_2, \kappa_2}) \lesssim \|u_{J_1, \kappa_1}\|_{L^2(\mu_m)} \|u_{J_2, \kappa_2}\|_{L^2(\mu_m)} = 1$, we see like before
that separation in $\R^n$  gives
\begin{align*}
|\langle T(h_{I_1, \eta_1} \otimes u_{J_1, \kappa_1}),  h_{I_2, \eta_2} \otimes u_{J_2, \kappa_2}\rangle| \lesssim A_{I_1I_2}^{\textup{sep}}.
\end{align*}

For a given $J_1$ there are only boundedly many $J_2$ for which $2^{-r}\ell(J_2) \le \ell(J_1) \le \ell(J_2)$ and $d(J_1,J_2) \le 2\ell(J_1)^{\gamma_m}\ell(J_2)^{1-\gamma_m}$.
Combining this fact with H\"older's inequality and Proposition \ref{l2s} readily yields that the corresponding summation is dominated by $\|f\|_{L^2(\mu)} \|g\|_{L^2(\mu)}$.

\section{Adjacent cubes}\label{sec:adj}

We proceed to the case where the pair $(I_1,I_2)$ consists of adjacent cubes. Since all separated cases were already handled, this leaves us with the two possibilities for the other pair $(J_1,J_2)$ of being either nested or adjacent.

\subsection{Surgery for adjacent cubes}
For the analysis of the adjacent pairs $(I_1,I_2)$ handled in this section, we will need to carry out a further splitting introduced by Nazarov, Treil and Volberg under the name \emph{surgery}. We introduce it here in a pleasently compact form.

Consider two adjacent cubes $I_1,I_2$ with $\ell(I_1)\leq\ell(I_2)$. Let $\theta\in(0,1)$. We perform surgery on $(I_1, I_2)$ with parameter $\theta > 0$.
Let $j(\theta)\in\Z$ be such that $2^{-21}\theta\leq 2^{j(\theta)}<2^{-20}\theta$. Let $\mathcal{D}_n^*$ be yet another random grid in $\R^n$, independent of all other grids considered. Let $G:=\{g\in\mathcal{D}_n^*:\ell(g)=2^{j(\theta)}\ell(I_1)\}$, and for $x\in \R^n$, let $G(x)$ be the unique cube in $G$ that contains $x$. We define
\begin{equation*}
  I_{1,\partial}:=\{x\in I_1:d(G(x),\partial I_2)<\theta\ell(I_2)/2\}\cup\{x\in I_1\cap I_2: d(x,\partial G(x))<\theta\ell(G(x))\}.
\end{equation*}
Thus points in $I_{1,\partial}$ belong to $I_1$, and are either close to the boundary of $I_2$, or to the boundary of the grid $G$. The set $I_{1,\partial}$ depends on the set $I_2$ as well. However, we have
\begin{equation}\label{eq:Ibad}
\begin{split}
  I_{1,\partial}\subset I_{1,\textup{bad}}
  :=I_1\cap\Big[ &\bigcup_{\substack{I_2'\in\mathcal{D}_n'\\ \ell(I_1)\leq\ell(I_2')\leq 2^r\ell(I_1)}}\{x:d(x,\partial I_2')<\theta\ell(I_2')\} \\
    &\cup\bigcup_{\substack{g\in\mathcal{D}_n^*\\ \ell(g)=2^{j(\theta)}\ell(I_1)}}\{x:d(x,\partial g)<\theta\ell(g)\}\Big],
\end{split}
\end{equation}
which depends only on $I_1$ and the grids $\mathcal{D}_n'$ and $\mathcal{D}_n^*$.

We set
\begin{equation*}
  I_{1,\textup{sep}}:=I_1\setminus (I_{1,\partial}\cup I_2),
\end{equation*}
the part of $I_1$ strictly separated from $I_2$. Finally, we have
\begin{equation*}
  I_{1,\Delta} := I_1\setminus(I_{1,\partial}\cup I_{1,\textup{sep}})
  =\bigcup_i L_i,
\end{equation*}
where each $L_i$ is of the form $L_i=(1-\theta)g\cap I_1\cap I_2$ for some $g\in G$, and $\#i \lesssim_{\theta} 1$.
In fact, $L_i$ is of the form $L_i=(1-\theta)g$ unless it is close to the boundary of $I_1$; it cannot be close to the boundary of $I_2$, since such cubes were already subtracted in the $I_{1,\partial}$ component.

We have the partition
\begin{equation*}
  I_1=I_{1,\textup{sep}}\cup I_{1,\partial}\cup I_{1,\Delta} = I_{1,\textup{sep}}\cup I_{1,\partial}\cup \bigcup_i L_i ,
\end{equation*}
and in a completely analogous manner also
\begin{equation*}
  I_2=I_{2,\textup{sep}}\cup I_{2,\partial}\cup I_{2, \Delta} =  I_{2,\textup{sep}}\cup I_{2,\partial} \cup  \bigcup_j L_j.
\end{equation*}
A key observation is that all $L_i\subset I_1\cap I_2$ appearing in the first union are cubes (of the form $(1-\theta)g$ for $g\in G$) unless they are close to $\partial I_1$, and they are never close to $\partial I_2$, while the $L_j$ in the second union are cubes unless they are close to $\partial I_2$, and they are never close to $\partial I_1$. Thus, all $L_i=L_j$ that appear in both unions are cubes and then $5L_i \subset I_1 \cap I_2$.

%This decomposition leads to the splitting
%\begin{equation*}
%\begin{split}
%  (1_{I_1},1_{I_2}) &=\sum_{\alpha\in\{\textup{sep},\partial,i\}}
%      \sum_{\beta\in\{\textup{sep},\partial,j\}}(1_{I_1,\alpha},1_{I_2,\beta})
%\end{split}
%\end{equation*}
%where $\alpha\in\{\textup{sep},\partial,i\}$ and $\beta\in\{\textup{sep},\partial,j\}$ and $I_{1,i}=L_i$, $I_{2,j}=L_j$. There are basically three different cases to consider:
%\begin{itemize}
%  \item The separated case $d(I_{1,\alpha},I_{2,\beta})\gtrsim\ell(I_1)$. This happens if at least one of $\alpha,\beta$ is $\textup{sep}$, or if $\alpha=i\neq j=\beta$, and the philosophy is to use the boundedness of the kernel away from the diagonal.
%  \item The boundary case, where at least one of $\alpha,\beta$ is $\partial$. This is handled by a probabilistic argument, using that the probability of a point ending up on the random boundary is small.
%  \item The diagonal case $\alpha=i=\beta$. As we observed above, in this case $I_{1,i}=I_{2,i}=L_i=(1-2\epsilon)g$ for some cube $g\in G$ such that $5L_i\subset I_1\cap I_2$, and we are in a position to use a form of the weak boundedness property.
%\end{itemize}

\subsection{Adjacent/nested}
The structure of the decomposition is the same as in the separated/nested case.
Namely, we write $\langle T(h_{I_1, \eta_1} \otimes u_{J_1, \kappa_1}),  h_{I_2, \eta_2} \otimes u_{J_2, \kappa_2}\rangle$ as the sum
\begin{align*}
[\langle T(h_{I_1, \eta_1} \otimes u_{J_1, \kappa_1}),  h_{I_2, \eta_2} \otimes u_{J_2, \kappa_2}\rangle
- \langle &u_{J_2,\kappa_2} \rangle_{J_1} \langle T(h_{I_1, \eta_1} \otimes u_{J_1, \kappa_1}),  h_{I_2, \eta_2} \otimes 1\rangle] \\
&+ \langle u_{J_2,\kappa_2} \rangle_{J_1} \langle T(h_{I_1, \eta_1} \otimes u_{J_1, \kappa_1}),  h_{I_2, \eta_2} \otimes 1\rangle.
\end{align*}
Again, we start by dealing with the sum having the first term as the matrix element. Let $J_{2,1} \in \textup{ch}(J_2)$ be such that $J_1 \subset J_{2,1}$.
Note that $\langle u_{J_2,\kappa_2} \rangle_{J_1} = \langle u_{J_2,\kappa_2} \rangle_{J_{2,1}}$. Therefore, we have that
\begin{displaymath}
\langle T(h_{I_1, \eta_1} \otimes u_{J_1, \kappa_1}),  h_{I_2, \eta_2} \otimes u_{J_2, \kappa_2}\rangle
- \langle u_{J_2,\kappa_2} \rangle_{J_1} \langle T(h_{I_1, \eta_1} \otimes u_{J_1, \kappa_1}),  h_{I_2, \eta_2} \otimes 1\rangle
\end{displaymath}
equals
\begin{align*}
-\langle u_{J_2,\kappa_2} \rangle_{J_{2,1}}  \langle T(h_{I_1, \eta_1}& \otimes u_{J_1, \kappa_1}),  h_{I_2, \eta_2} \otimes \chi_{J_{2,1}^c}\rangle \\
&+ \mathop{\sum_{J_2' \in \textup{ch}(J_2)}}_{J_2' \subset J_2 \setminus J_{2,1}} \langle T(h_{I_1, \eta_1} \otimes u_{J_1, \kappa_1}),  h_{I_2, \eta_2} \otimes u_{J_2, \kappa_2}\chi_{J_2'}\rangle.
\end{align*}
As $J_1$ is good, $J_1 \subset J_{2,1}$ and $\ell(J_1) \le 2^{-r}\ell(J_{2,1})$, we have
\begin{displaymath}
d(J_1, J_{2,1}^c) \ge 4\ell(J_1)^{\gamma_m}\ell(J_{2,1})^{1-\gamma_m} > 2\ell(J_1)^{\gamma_m}\ell(J_2)^{1-\gamma_m}.
\end{displaymath}
We state the following analogs of Lemma \ref{mixedsep1} and Lemma \ref{mixedsep2}:
\begin{lem}
There holds
\begin{displaymath}
|\langle u_{J_2,\kappa_2} \rangle_{J_{2,1}}  \langle T(h_{I_1, \eta_1} \otimes u_{J_1, \kappa_1}),  h_{I_2, \eta_2} \otimes \chi_{J_{2,1}^c}\rangle| \lesssim A_{J_1J_2}^{\textup{in}}.
\end{displaymath}
\end{lem}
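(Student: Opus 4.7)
The essential observation is that $J_1\subset J_{2,1}$ implies $\operatorname{spt} u_{J_1,\kappa_1}\cap\operatorname{spt}\chi_{J_{2,1}^c}=\emptyset$ in $\R^m$. So although $I_1,I_2$ are only adjacent (and hence the full two-parameter kernel argument of Lemma~\ref{mixedsep1} breaks down in the $n$-direction), the disjointness in the $m$-direction lets us invoke the analog of the partial kernel representation there. I would write
\begin{equation*}
\langle T(h_{I_1,\eta_1}\otimes u_{J_1,\kappa_1}), h_{I_2,\eta_2}\otimes \chi_{J_{2,1}^c}\rangle = \iint K_{h_{I_1,\eta_1}, h_{I_2,\eta_2}}(x_2,y_2)\, u_{J_1,\kappa_1}(y_2)\chi_{J_{2,1}^c}(x_2)\, d\mu_m(x_2)\,d\mu_m(y_2),
\end{equation*}
where $K_{h_{I_1,\eta_1},h_{I_2,\eta_2}}$ is an ordinary one-parameter Calder\'on--Zygmund kernel on $\R^m$ with constant $C(h_{I_1,\eta_1},h_{I_2,\eta_2})\lesssim \|h_{I_1,\eta_1}\|_{L^2(\mu_n)}\|h_{I_2,\eta_2}\|_{L^2(\mu_n)}=1$. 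In this way the adjacency of $I_1,I_2$ is absorbed into a uniform constant, and the problem collapses to a one-parameter estimate in $\R^m$.

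From here the plan is to copy the $\R^m$-part of the proof of Lemma~\ref{mixedsep1}. Using $\int u_{J_1,\kappa_1}\,d\mu_m=0$ (which holds because $\kappa_1\ne 0$), I would replace $K_{h_{I_1,\eta_1},h_{I_2,\eta_2}}(x_2,y_2)$ by $K_{h_{I_1,\eta_1},h_{I_2,\eta_2}}(x_2,y_2)-K_{h_{I_1,\eta_1},h_{I_2,\eta_2}}(x_2,w)$ for an arbitrary $w\in J_1$, and then apply the one-variable H\"older estimate for the partial kernel. The goodness of $J_1$ together with $\ell(J_1)\le 2^{-r}\ell(J_{2,1})$ gives
\begin{equation*}
d(J_1,J_{2,1}^c)\ge 4\ell(J_1)^{\gamma_m}\ell(J_{2,1})^{1-\gamma_m}>2\ell(J_1)^{\gamma_m}\ell(J_2)^{1-\gamma_m},
\end{equation*}
so the H\"older hypothesis $|y_2-w|\le|x_2-w|/2$ is satisfied for all $y_2\in J_1$ and $x_2\in J_{2,1}^c$.

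The remaining one-dimensional integrations are routine copies of those in Lemma~\ref{mixedsep1}: the $y_2$-integral gives $\int_{J_1}|u_{J_1,\kappa_1}(y_2)||y_2-w|^\beta\,d\mu_m(y_2)\le\ell(J_1)^\beta\mu_m(J_1)^{1/2}$, the tail estimate $\int_{\R^m\setminus B(w,d(J_1,J_{2,1}^c))}|x_2-w|^{-\beta}\lambda_m(x_2,|x_2-w|)^{-1}\,d\mu_m(x_2)\lesssim d(J_1,J_{2,1}^c)^{-\beta}\lesssim\ell(J_1)^{-\beta/2}\ell(J_2)^{-\beta/2}$, and the trivial bound $|\langle u_{J_2,\kappa_2}\rangle_{J_{2,1}}|\le\mu_m(J_{2,1})^{-1/2}$. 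Multiplying the resulting factors yields exactly $A_{J_1J_2}^{\textup{in}}$.

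The main conceptual point one has to recognize -- and the only potential pitfall -- is that no surgery on $(I_1,I_2)$ is required for this particular matrix element: the disjoint-support structure in the $m$-direction already allows one to hide all $\R^n$-information inside the uniform constant $C(h_{I_1,\eta_1},h_{I_2,\eta_2})\lesssim 1$. Surgery will, however, be unavoidable for the companion term $\langle u_{J_2,\kappa_2}\rangle_{J_1}\langle T(h_{I_1,\eta_1}\otimes u_{J_1,\kappa_1}),h_{I_2,\eta_2}\otimes 1\rangle$ still to come, where $\chi_{J_{2,1}^c}$ is replaced by the constant $1$ and no such $m$-direction disjointness is available.
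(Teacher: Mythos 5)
Your proposal is correct and matches the paper's intended argument exactly: the paper disposes of this lemma by remarking that "the proofs are similar as before except one uses the kernel $K_{h_{I_1,\eta_1},h_{I_2,\eta_2}}$ and the fact that $C(h_{I_1,\eta_1},h_{I_2,\eta_2})\lesssim 1$," which is precisely your reduction to the one-parameter partial kernel on $\R^m$ followed by the $\R^m$-half of the Lemma~\ref{mixedsep1} computation. Your closing remark that surgery is only needed for the companion paraproduct term is also consistent with how the paper proceeds.
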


\begin{lem}
If $J_2' \in \textup{ch}(J_2)$ and $J_2' \subset J_2 \setminus J_{2,1}$, then
\begin{displaymath}
|\langle T(h_{I_1, \eta_1} \otimes u_{J_1, \kappa_1}),  h_{I_2, \eta_2} \otimes u_{J_2, \kappa_2}\chi_{J_2'}\rangle| \lesssim A_{J_1J_2}^{\textup{in}}.
\end{displaymath}
\end{lem}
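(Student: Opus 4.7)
The proof will mirror that of Lemma \ref{mixedsep2} from the separated/nested case, with one essential modification: since we no longer have separation between $I_1$ and $I_2$, we cannot invoke the full kernel H\"older estimate via Lemma \ref{ccss}. Instead, we will exploit the partial kernel representation in the $m$-variable, which is available precisely because we do have separation between $J_1$ and $J_2'$.

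First I would observe that $\kappa_1 \ne 0$ automatically: otherwise $\ell(J_1) = 2^{\ell}$, contradicting the nested condition $\ell(J_1) < 2^{-r}\ell(J_2) \le 2^{-r} \cdot 2^{\ell}$. Second, since $J_1 \subset J_{2,1}$ and $J_2' \subset J_2 \setminus J_{2,1}$, the supports $\textup{spt}\, u_{J_1, \kappa_1}$ and $\textup{spt}\, (u_{J_2, \kappa_2}\chi_{J_2'})$ are disjoint subsets of $\R^m$. Invoking the $K_{f_1, g_1}$ analogue of the partial kernel representation (Assumption 4) with $f_1 = h_{I_1, \eta_1}$ and $g_1 = h_{I_2, \eta_2}$, we rewrite
\begin{equation*}
\langle T(h_{I_1, \eta_1} \otimes u_{J_1, \kappa_1}), h_{I_2, \eta_2} \otimes u_{J_2, \kappa_2}\chi_{J_2'}\rangle = \iint K_{h_{I_1, \eta_1}, h_{I_2, \eta_2}}(x_2, y_2) u_{J_1, \kappa_1}(y_2) u_{J_2, \kappa_2}(x_2) \chi_{J_2'}(x_2)\, d\mu_m(x_2)\, d\mu_m(y_2),
\end{equation*}
and the testing bound gives $C(h_{I_1, \eta_1}, h_{I_2, \eta_2}) \lesssim \|h_{I_1, \eta_1}\|_{L^2(\mu_n)} \|h_{I_2, \eta_2}\|_{L^2(\mu_n)} = 1$.

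Next, using $\int u_{J_1, \kappa_1}\, d\mu_m = 0$, I would subtract $K_{h_{I_1, \eta_1}, h_{I_2, \eta_2}}(x_2, w)$ inside the integrand for a fixed $w \in J_1$ and apply the $y$-H\"older estimate of the partial kernel. The goodness of $J_1$ yields
\begin{equation*}
|y_2 - w| \le \ell(J_1) \le \ell(J_1)^{\gamma_m}\ell(J_{2,1})^{1-\gamma_m} \le d(J_1, J_{2,1}^c)/4 \le |x_2 - w|/2
\end{equation*}
for all $x_2 \in J_2' \subset J_{2,1}^c$, so the H\"older condition is applicable. The resulting tail estimate is identical to the one carried out in the proof of Lemma \ref{mixedsep1}: we control
\begin{equation*}
\int_{\R^m \setminus B(w, d(J_1, J_{2,1}^c))} \frac{|x_2 - w|^{-\beta}}{\lambda_m(w, |x_2 - w|)}\, d\mu_m(x_2) \lesssim d(J_1, J_{2,1}^c)^{-\beta} \lesssim \ell(J_1)^{-\beta/2}\ell(J_2)^{-\beta/2},
\end{equation*}
while the $L^1$-norm $\|u_{J_1, \kappa_1}\|_{L^1(\mu_m)} \lesssim \mu_m(J_1)^{1/2}$ and the pointwise bound $|u_{J_2, \kappa_2}| \lesssim \mu_m(J_{2,1})^{-1/2}$ on $J_2'$ (coming from the Haar normalization, as in Lemma \ref{mixedsep2}) supply the remaining factors.

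Multiplying everything together yields exactly
\begin{equation*}
|\langle T(h_{I_1, \eta_1} \otimes u_{J_1, \kappa_1}), h_{I_2, \eta_2} \otimes u_{J_2, \kappa_2}\chi_{J_2'}\rangle| \lesssim \Big(\frac{\ell(J_1)}{\ell(J_2)}\Big)^{\beta/2}\Big(\frac{\mu_m(J_1)}{\mu_m(J_{2,1})}\Big)^{1/2} = A_{J_1J_2}^{\textup{in}},
\end{equation*}
as claimed. I anticipate no real obstacle here; the argument is a direct transcription of the Lemmas \ref{mixedsep1}--\ref{mixedsep2} computation, with the role of the full kernel H\"older estimate replaced by the partial kernel H\"older estimate. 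The only point requiring mild care is the bookkeeping that ensures the constant $C(h_{I_1, \eta_1}, h_{I_2, \eta_2}) \lesssim 1$ supplied by the partial-boundedness assumption does not depend on the adjacent cubes $I_1, I_2$ (indeed it does not, by orthonormality of Haar functions).
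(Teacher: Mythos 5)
Your overall route is exactly the paper's: the lemma is proved by rerunning the separated/nested computation with the full-kernel H\"older estimate replaced by the partial kernel $K_{h_{I_1,\eta_1},h_{I_2,\eta_2}}$ together with the bound $C(h_{I_1,\eta_1},h_{I_2,\eta_2})\lesssim\|h_{I_1,\eta_1}\|_{L^2(\mu_n)}\|h_{I_2,\eta_2}\|_{L^2(\mu_n)}=1$. Your reductions (disjointness of the $\R^m$-supports, $\kappa_1\neq0$, the cancellation of $u_{J_1,\kappa_1}$, and the goodness inequality $|y_2-w|\le|x_2-w|/2$) are all correct.

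However, your final bookkeeping step has a genuine flaw: the pointwise bound $|u_{J_2,\kappa_2}|\lesssim\mu_m(J_{2,1})^{-1/2}$ on $J_2'$ is false in the non-homogeneous setting. If $J_2'$ happens to be the distinguished child $J_{2,\kappa_2}$ in the Haar construction, then $\sup_{J_2'}|u_{J_2,\kappa_2}|\sim\mu_m(J_2')^{-1/2}$, and $\mu_m(J_2')$ need not be comparable to $\mu_m(J_{2,1})$ — the measures of distinct children of $J_2$ are unrelated for general upper doubling $\mu_m$. Moreover, the route you chose (factor out this sup, then bound the remaining tail integral by $d(J_1,J_{2,1}^c)^{-\beta}$ as in Lemma~\ref{mixedsep1}) discards the factor $\lambda_m(w,\cdot)^{-1}$ that is precisely what repairs this. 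The correct argument is that of Lemma~\ref{mixedsep2}: since $J_2'$ is bounded, use the pointwise kernel bound
$\ell(J_1)^\beta d(J_1,J_2')^{-\beta}\lambda_m(w,d(J_1,J_2'))^{-1}\lesssim(\ell(J_1)/\ell(J_2'))^{\beta/2}\lambda_m(w,\ell(J_2'))^{-1}$
(the case-splitting estimate from the proof of Lemma~\ref{ccss}, which upgrades the argument of $\lambda_m$ from $d(J_1,J_2')$ to $D(J_1,J_2')\ge\ell(J_2')$), integrate $|u_{J_2,\kappa_2}|$ over $J_2'$ by Cauchy--Schwarz to obtain $\mu_m(J_2')^{1/2}$, and conclude with $\mu_m(J_2')^{1/2}/\lambda_m(w,\ell(J_2'))\lesssim\mu_m(J_{2,1})^{-1/2}$, which holds because $\lambda_m(w,\ell(J_2'))=\lambda_m(w,\ell(J_{2,1}))\gtrsim\max\{\mu_m(J_2'),\mu_m(J_{2,1})\}$ for $w\in J_1\subset J_{2,1}$.
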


Indeed, the proofs are similar as before except one uses the kernel $K_{h_{I_1, \eta_1},  h_{I_2, \eta_2}}$ and the fact that $C(h_{I_1, \eta_1},  h_{I_2, \eta_2}) \lesssim 1$.
Recalling Lemma \ref{InSummation} and the
fact that for a given $I_1$ there are only boundedly many $I_2$ for which $2^{-r}\ell(I_2) \le \ell(I_1) \le \ell(I_2)$ and $d(I_1,I_2) \le 2\ell(I_1)^{\gamma_n}\ell(I_2)^{1-\gamma_n}$, it is
immediate that the corresponding series is dominated by $\|f\|_{L^2(\mu)} \|g\|_{L^2(\mu)}$.

Therefore, we need only to consider the same summation but with the matrix element
\begin{displaymath}
\langle u_{J_2,\kappa_2} \rangle_{J_1} \langle T(h_{I_1, \eta_1} \otimes u_{J_1, \kappa_1}),  h_{I_2, \eta_2} \otimes 1\rangle.
\end{displaymath}
Exactly as we have seen before, the series with this matrix element collapses to
\begin{equation*}
\begin{split}
  &\Big\langle \sum_{\eta_1, \kappa_1, \eta_2} \mathop{\mathop{\sum_{I_1, I_2 \textup{ good}}}_{2^{-r}\ell(I_2) \le \ell(I_1) \le  \ell(I_2)}}_{d(I_1, I_2) \le 2\ell(I_1)^{\gamma_n}\ell(I_2)^{1-\gamma_n}}
h_{I_2, \eta_2} \otimes (\Pi^{\kappa_1}_{b_{I_1I_2}^{\eta_1\eta_2}})^*f_{I_1}^{\eta_1}, g_{\textup{good}}\Big\rangle \\
  &=\sum_{\eta_1,\kappa_1,\eta_2}
  \sum_{\substack{I_1,I_2\textup{ good}\\ I_1\sim I_2}}\langle (\Pi^{\kappa_1}_{b^{\eta_1\eta_2}_{I_1I_2}})^* f_{I_1}^{\eta_1}, g^{\eta_2}_{\textup{good},I_2}\rangle_2,
\end{split}
\end{equation*}
where $f_{I_1}^{\eta_1} = \langle f, h_{I_1,\eta_1} \rangle_1$, $g^{\eta_2}_{\textup{good},I_2}=\langle g_{\textup{good}}, h_{I_2, \eta_2}\rangle_1$,  and $b_{I_1I_2}^{\eta_1\eta_2} = \langle T^*(h_{I_2,\eta_2} \otimes 1), h_{I_1,\eta_1}\rangle_1$. The summing condition
tying $I_1$ and $I_2$ in the first line has been abbreviated to $I_1 \sim I_2$ in the second.

We can then estimate
\begin{equation*}
\begin{split}
  &\Big|\sum_{\substack{I_1,I_2\textup{ good}\\ I_1\sim I_2}}
    \langle (\Pi^{\kappa_1}_{b^{\eta_1\eta_2}_{I_1I_2}})^* f_{I_1}^{\eta_1}, g^{\eta_2}_{\textup{good},I_2}\rangle_2\Big| \\
  &\lesssim\sum_{\substack{I_1,I_2\\ I_1\sim I_2}}
    \| b^{\eta_1\eta_2}_{I_1I_2}\|_{\textup{BMO}^2_{15}(\mu_m)} \|f_{I_1}^{\eta_1}\|_{L^2(\mu_m)} \|g^{\eta_2}_{I_2}\|_{L^2(\mu_m)},
\end{split}
\end{equation*}
where we also used the simple bound that $\|g^{\eta_2}_{\textup{good},I_2}\|_{L^2(\mu_m)}\leq\|g^{\eta_2}_{I_2}\|_{L^2(\mu_m)}$, which follows at once from the Haar expansions.

%It is enough to show that given $\epsilon > 0$ we have the bound
%\begin{displaymath}
%E_{w_n'} \bigg[ \sum_{I_2 \in \mathcal{D}_{n}'} \bigg( \mathop{\sum_{I_1 \in \mathcal{D}_{n}}}_{I_1 \sim I_2} \| (\Pi^{\kappa_1}_{b_{I_1I_2}^{\eta_1\eta_2}})^*f_{I_1}^{\eta_1}\|_{L^2(\mu_m)} \bigg)^2  \bigg]^{1/2} \lesssim C(\epsilon)\|f\|_{L^2(\mu)}+
%c(\epsilon)\|T\|\| \|f\|_{L^2(\mu)},
%\end{displaymath}
%where $c(\epsilon) \to 0$, when $\epsilon \to 0$. Notice that we have simply thrown away the information that $I_1$ and $I_2$ were good. Given $I_2$ there are boundedly many $I_1$ so that $I_1 \sim I_2$, and so
%\begin{align*}
%&\bigg[ \sum_{I_2 \in \mathcal{D}_{n}'} \bigg( \mathop{\sum_{I_1 \in \mathcal{D}_{n}}}_{I_1 \sim I_2} \| (\Pi^{\kappa_1}_{b_{I_1I_2}^{\eta_1\eta_2}})^*f_{I_1}^{\eta_1}\|_{L^2(\mu_m)} \bigg)^2  \bigg]^{1/2} \\
%&\lesssim \bigg[ \sum_{I_2 \in \mathcal{D}_{n}'}  \mathop{\sum_{I_1 \in \mathcal{D}_{n}}}_{I_1 \sim I_2} \| (\Pi^{\kappa_1}_{b_{I_1I_2}^{\eta_1\eta_2}})^*f_{I_1}^{\eta_1}\|_{L^2(\mu_m)}^2   \bigg]^{1/2} \\
%&\lesssim  \bigg[ \sum_{I_2 \in \mathcal{D}_{n}'}  \mathop{\sum_{I_1 \in \mathcal{D}_{n}}}_{I_1 \sim I_2} \| b_{I_1I_2}^{\eta_1\eta_2}\|_{\textup{BMO}^2_{15}(\mu_m)}^2  \|f_{I_1}^{\eta_1}\|_{L^2(\mu_m)}^2   \bigg]^{1/2}.
%\end{align*}

\begin{lem}
There holds that
\begin{displaymath}
\|b_{I_1I_2}^{\eta_1\eta_2}\|_{\textup{BMO}^2_{15}(\mu_m)} \le C(\theta) + C\|T\| (H_{I_1, \textup{bad}}^{\eta_1}+H^{\eta_2}_{I_2,\textup{bad}})
\end{displaymath}
where
\begin{displaymath}
H_{I_i, \textup{bad}}^{\eta_i} = \Big( \sum_{I_i' \in \textup{ch}(I_i)} |\langle h_{I_i,\eta_i} \rangle_{I_i'} |^2 \mu_n(I_{i,\textup{bad}}') \Big)^{1/2}
\end{displaymath}
and $I_{i,\textup{bad}}$ is defined as in \eqref{eq:Ibad} (that is, we use surgery with parameter $\theta > 0$).
%\begin{displaymath}
%I_{1,\textup{bad}}' = I_1' \cap \mathop{\bigcup_{I \in \mathcal{D}_n'}}_{2^{-r}\ell(I) \le \ell(I_1') \le  \ell(I)} (1+\epsilon)I \setminus I.
%\end{displaymath}
\end{lem}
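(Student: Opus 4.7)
By the standard characterization of BMO by duality, it suffices to show that for every cube $V\subset\R^m$ and every function $a$ with $\textup{spt}\,a\subset V$, $\int a\,d\mu_m=0$ and $\|a\|_{L^2(\mu_m)}=1$, we have
\begin{equation*}
  |\langle T(h_{I_1,\eta_1}\otimes a),h_{I_2,\eta_2}\otimes 1\rangle|
  \lesssim \big[C(\theta)+C\|T\|(H_{I_1,\textup{bad}}^{\eta_1}+H_{I_2,\textup{bad}}^{\eta_2})\big]\mu_m(15V)^{1/2}.
\end{equation*}
Split $1=\chi_{3V}+\chi_{(3V)^c}$ in the last variable. For the $\chi_{(3V)^c}$-term, the supports of $a$ and $\chi_{(3V)^c}$ are disjoint, so after using $\int a\,d\mu_m=0$ and replacing $K(x,y)$ by $K(x,y)-K(x,(y_1,c_V))$, the mixed H\"older/size estimate of the full kernel and the usual tail bound $\int_{(3V)^c}|x_2-c_V|^{-\beta}\lambda_m(c_V,|x_2-c_V|)^{-1}\,d\mu_m(x_2)\lesssim\ell(V)^{-\beta}$ reduce everything to a routine $L^1$-type bound on $I_1\times I_2$, yielding a contribution bounded by $C\mu_m(V)^{1/2}$.

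For the $\chi_{3V}$-term, we apply the surgery decomposition simultaneously to $h_{I_1,\eta_1}$ and $h_{I_2,\eta_2}$:
\begin{equation*}
  h_{I_i,\eta_i}=h_{I_i,\eta_i}\chi_{I_{i,\textup{sep}}}
  +h_{I_i,\eta_i}\chi_{I_{i,\partial}}+\sum_k c_{i,k}\chi_{L_k},
\end{equation*}
where on each common cube $L_k$ the Haar function reduces to a constant $c_{i,k}=\langle h_{I_i,\eta_i}\rangle_{I_i'(k)}$. Expanding the pairing gives nine cross terms, which we sort as follows.

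\emph{Separated pieces} $(\textup{sep},*)$ or $(*,\textup{sep})$. The supports in $\R^n$ are separated by at least $\theta\ell(I_2)/2$, so the partial kernel representation with kernel $K_{a,\chi_{3V}}$ applies, with $C(a,\chi_{3V})\lesssim\|a\|_{L^2(\mu_m)}\mu_m(3V)^{1/2}$. Using the size bound for $K_{a,\chi_{3V}}$ together with the Haar normalization, the $\theta$-dependent lower bound on the separation produces a contribution of the form $C(\theta)\mu_m(3V)^{1/2}$.

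\emph{Boundary pieces} $(\partial,*)$ or $(*,\partial)$. Here we use the a priori boundedness of $T$ to get the bound $\|T\|\|h_{I_i,\eta_i}\chi_{I_{i,\partial}}\|_{L^2(\mu_n)}\mu_m(3V)^{1/2}$. Since $h_{I_i,\eta_i}$ is constant on each child of $I_i$ and $I_{i,\partial}\subset I_{i,\textup{bad}}$ (cf.~\eqref{eq:Ibad}), expanding the $L^2$ norm yields exactly $H_{I_i,\textup{bad}}^{\eta_i}$, producing the second summand in the claimed bound.

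\emph{Common-cube pieces} $(L,L)$. On each common cube $L=L_k$, the Haar functions reduce to constants, so the matrix element becomes
\begin{equation*}
   c_{1,k}c_{2,k}\langle T(\chi_L\otimes a),\chi_L\otimes\chi_{3V}\rangle
  =c_{1,k}c_{2,k}\langle T^*(\chi_L\otimes\chi_{3V}),\chi_L\otimes a\rangle.
\end{equation*}
Because $\textup{spt}\,a\subset V\subset 3V$ and $\int a\,d\mu_m=0$, the partial weak boundedness $\times$ BMO assumption applied with $I=L$, $J=3V$ gives a bound $C\mu_n(5L)\mu_m(15V)^{1/2}$ (this is precisely where the $15$ in $\textup{BMO}^2_{15}$ originates). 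Summing in $k$, Cauchy--Schwarz together with the bounded overlap of $\{5L_k\}_k$ inside $I_1\cap I_2$ yields $\sum_k|c_{1,k}c_{2,k}|\mu_n(5L_k)\lesssim\|h_{I_1,\eta_1}\|_{L^2(\mu_n)}\|h_{I_2,\eta_2}\|_{L^2(\mu_n)}=1$, producing the first $C(\theta)$ summand.

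The main technical obstacle is bookkeeping: one must check that every one of the nine cross-product terms falls into exactly one of the three patterns above, that the supports in $\R^n$ are indeed strictly separated wherever we claim so (here the definition of $I_{i,\textup{sep}}$ via surgery, and the fact that common cubes $L_k$ are never close to $\partial I_1$ or $\partial I_2$, are essential), and that the boundary-term estimate exploits precisely the structure of $H_{I_i,\textup{bad}}^{\eta_i}$ coming from Haar functions being constant on children. Combining the three bounds finishes the proof.
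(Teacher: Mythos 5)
Your overall strategy coincides with the paper's: reduce to testing against $a$ supported on $V$ with $\int a\,d\mu_m=0$, split $1=\chi_{3V}+\chi_{(3V)^c}$, and handle the $\chi_{3V}$ part by surgery, with separated pieces controlled by kernel size estimates and a $\theta$-dependent separation, boundary pieces by the a priori bound $\|T\|$ (producing $H^{\eta_i}_{I_i,\textup{bad}}$), and the diagonal cubes $L_i=L_j$ by the partial weak boundedness $\times$ BMO assumption with $J=3V$ (whence the $15$). That architecture is right.

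There is, however, one step that fails as written: for the $\chi_{(3V)^c}$ term you invoke the \emph{full} kernel representation and its mixed H\"older/size estimate. The full kernel representation is only available when the supports are disjoint in \emph{both} variables, and here $(I_1,I_2)$ is an adjacent pair, so $\textup{spt}\,h_{I_1,\eta_1}\cap\textup{spt}\,h_{I_2,\eta_2}\neq\emptyset$ in general; you cannot write $\langle T(h_{I_1,\eta_1}\otimes a),h_{I_2,\eta_2}\otimes\chi_{(3V)^c}\rangle$ as an integral against $K(x,y)$. The correct tool is the \emph{partial} kernel representation in the second variable, $K_{h_{I_1,\eta_1},h_{I_2,\eta_2}}(x_2,y_2)$, which exists because $V\cap(3V)^c=\emptyset$ and satisfies standard estimates with constant $C(h_{I_1,\eta_1},h_{I_2,\eta_2})\lesssim\|h_{I_1,\eta_1}\|_{L^2(\mu_n)}\|h_{I_2,\eta_2}\|_{L^2(\mu_n)}=1$; its H\"older condition then gives the bound $\lesssim\|a\|_{L^2(\mu_m)}\mu_m(V)^{1/2}$ exactly as you intend. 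A second, smaller bookkeeping point: the quantity $H^{\eta_i}_{I_i,\textup{bad}}$ is defined via the bad parts $I'_{i,\textup{bad}}$ of the \emph{children} $I_i'$, so one should first expand $h_{I_i,\eta_i}=\sum_{I_i'}\langle h_{I_i,\eta_i}\rangle_{I_i'}\chi_{I_i'}$ and perform the surgery on each pair of children $(I_1',I_2')$ (as the paper does), rather than on $I_1,I_2$ themselves; your version of the boundary estimate needs $I_i'\cap I_{i,\partial}\subset I'_{i,\textup{bad}}$, which is not what \eqref{eq:Ibad} directly provides. With these two corrections the argument matches the paper's proof.
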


\begin{proof}
We fix a cube $V \subset \R^m$ and a function $a$ such that spt$\,a \subset V$ and $\int a \,d\mu_m = 0$. We should show that
\begin{equation*}
  |\langle T(h_{I_1,\eta_1}\otimes a),h_{I_2,\eta_2}\otimes 1\rangle|
  \le  [C(\theta) + C\|T\| (H_{I_1, \textup{bad}}^{\eta_1}+H^{\eta_2}_{I_2,\textup{bad}})] \|a\|_{L^2(\mu_m)}\mu_m(15V)^{1/2}.
\end{equation*}
We split $1=\chi_{3V}+\chi_{(3V)^c}$. The H\"older property of $K_{h_{I_1, \eta_1},  h_{I_2, \eta_2}}$ gives that
\begin{displaymath}
|\langle T(h_{I_1,\eta_1} \otimes a), h_{I_2,\eta_2} \otimes \chi_{(3V)^c}\rangle| \lesssim \|a\|_{L^2(\mu_m)}\mu_m(V)^{1/2}.
\end{displaymath}

For the remaining part with $\chi_{3V}$ in place of $1$, writing the Haar functions $h_{I_i,\eta_i}$ as linear combinations of $\chi_{I_i'}$, where $I_i' \in \textup{ch}(I_i)$, we are reduced to bounding
\begin{equation*}
 \sum_{I_1 \in \textup{ch}(I_1)} \sum_{I_2 \in \textup{ch}(I_2)} |\ave{h_{I_1,\eta_1}}_{I_1'}| |\ave{h_{I_2,\eta_2}}_{I_2'}| |\langle T(\chi_{I_1'}\otimes a),\chi_{I_2'}\otimes \chi_{3V}\rangle|.
\end{equation*}
We concentrate on one such pairing
\begin{equation*}
  \langle T(\chi_{I_1}\otimes a),\chi_{I_2}\otimes \chi_{3V}\rangle,
\end{equation*}
where we have dropped the primes from $I_i'$ for brevity. This will be handled by applying the surgery to the pair $(I_1, I_2)$.

We write
\begin{align*}
\langle T(\chi_{I_1}\otimes a),\chi_{I_2}\otimes \chi_{3V}\rangle &= \sum_{\alpha \in \{\textup{sep},\,\partial\}} \langle T(\chi_{I_{1,\alpha}}\otimes a),\chi_{I_2}\otimes \chi_{3V}\rangle \\
&+ \sum_{\beta \in \{\textup{sep},\,\partial\}} \langle T(\chi_{I_{1,\Delta}}\otimes a),\chi_{I_{2, \beta}}\otimes \chi_{3V}\rangle \\
&+ \sum_{i \ne j} \langle T(\chi_{L_i}\otimes a),\chi_{L_j}\otimes \chi_{3V}\rangle\\
& + \sum_{i = j} \langle T(\chi_{L_i}\otimes a),\chi_{L_i}\otimes \chi_{3V}\rangle.
\end{align*}
If $\alpha = \textup{sep}$ or $\beta = \textup{sep}$ or $i \ne j$, then the corresponding pairing is seen to be dominated by
\begin{displaymath}
C(\theta)\mu_n(I_1)^{1/2}\mu_n(I_2)^{1/2}\|a\|_{L^2(\mu_m)}\mu_m(3V)^{1/2}
\end{displaymath}
using the size estimate of the kernel $K_{a, \chi_{3V}}$ together with the fact that the sets are separated by $c(\theta)\ell(I_1) \sim c(\theta)\ell(I_2)$.
In the case $i \ne j$ a further large dependence on $\theta$ is gained from the summation $\sum_{i \ne j} 1$.

If $i = j$, then by the diagonal BMO assumptions the corresponding pairing is dominated by
\begin{displaymath}
C\mu_n(5L_i)\|a\|_{L^2(\mu_m)}\mu_m(15V)^{1/2} \le C\mu_n(I_1)^{1/2}\mu_n(I_2)^{1/2}\|a\|_{L^2(\mu_m)}\mu_m(15V)^{1/2}.
\end{displaymath}
A factor $C(\theta)$ is gained from the summation $\sum_{i = j} 1$.

Finally, the sum of the cases $\alpha = \partial$ and $\beta = \partial$ is dominated by
\begin{displaymath}
\|T\|( \mu_n(I_{1,\textup{bad}})^{1/2} \mu_n(I_2)^{1/2} +  \mu_n(I_1)^{1/2}\mu_n(I_{2,\textup{bad}})^{1/2}) \|a\|_{L^2(\mu_m)}\mu_m(3V)^{1/2}.
\end{displaymath}
The BMO estimate now readily follows.

\end{proof}

Recalling that
\begin{equation*}
  \sum_{I_1 \in \mathcal{D}_{n}} \|f_{I_1}^{\eta_1}\|_{L^2(\mu_m)}^2 \leq \|f\|_{L^2(\mu)}^2,
  \qquad\sum_{I_2 \in \mathcal{D}_{n}'} \|g_{I_2}^{\eta_2}\|_{L^2(\mu_m)}^2 \leq \|g\|_{L^2(\mu)}^2,
\end{equation*}
we now know that
\begin{align*}
&\sum_{I_2 \in \mathcal{D}_{n}'} \mathop{\sum_{I_1 \in \mathcal{D}_{n}}}_{I_1 \sim I_2} \| b^{\eta_1\eta_2}_{I_1I_2}\|_{\textup{BMO}^2_{15}(\mu_m)}
  \|f_{I_1}^{\eta_1}\|_{L^2(\mu_m)}  \|g_{I_2}^{\eta_2}\|_{L^2(\mu_m)}  \\
&\le\sum_{I_2 \in \mathcal{D}_{n}'} \mathop{\sum_{I_1 \in \mathcal{D}_{n}}}_{I_1 \sim I_2}\big(C(\theta)+C\|T\|(H^{\eta_1}_{I_{1,\textup{bad}}}+
    H^{\eta_2}_{I_{2,\textup{bad}}})\big)  \|f_{I_1}^{\eta_1}\|_{L^2(\mu_m)}  \|g_{I_2}^{\eta_2}\|_{L^2(\mu_m)}  \\
&\le C(\theta)\|f\|_{L^2(\mu)}\|g\|_{L^2(\mu)}
  +C\|T\|\Big[ \sum_{I_1 \in \mathcal{D}_{n}} (H_{I_1, \textup{bad}}^{\eta_1})^2 \|f_{I_1}^{\eta_1}\|_{L^2(\mu_m)}^2 \bigg]^{1/2}\|g\|_{L^2(\mu)} \\
 &\qquad +C\|T\|\|f\|_{L^2(\mu)}\Big[ \sum_{I_2 \in \mathcal{D}_{n}'} (H_{I_2, \textup{bad}}^{\eta_2})^2 \|g_{I_2}^{\eta_2}\|_{L^2(\mu_m)}^2 \bigg]^{1/2}.
\end{align*}

%Notice that
%\begin{align*}
%\sum_{I_1 \in \mathcal{D}_{n}} \|f_{I_1}^{\eta_1}\|_{L^2(\mu_m)}^2 &\lesssim  \sum_s \mathop{\sum_{I_1 \in \mathcal{D}_{n}}}_{\ell(I_1)=2^{-s}} \int_{\R^m} \int_{I_1} |(P_{\mathcal{D}_n, s}^{\eta_1}f_{x_2})(x_1)|^2\,d\mu_n(x_1)\,d\mu_m(x_2) \\
%&\le \sum_s \int_{\R^m} \int_{\R^n} |(P_{\mathcal{D}_n, s}^{\eta_1}f_{x_2})(x_1)|^2\,d\mu_n(x_1)\,d\mu_m(x_2) \\
%&= \int_{\R^m} \sum_s \|P_{\mathcal{D}_n, s}^{\eta_1}f_{x_2}\|_{L^2(\mu_n)}^2 \,d\mu_m(x_2) \\
%&\le \int_{\R^m} \|f_{x_2}\|_{L^2(\mu_n)}^2\,d\mu_m(x_2) = \|f\|_{L^2(\mu)}^2.
%\end{align*}
%So we have the bound
%\begin{align*}
%&E_{w_n'}\bigg[ \sum_{I_2 \in \mathcal{D}_{n}'} \bigg( \mathop{\sum_{I_1 \in \mathcal{D}_{n}}}_{I_1 \sim I_2} \| (\Pi^{\kappa_1}_{b_{I_1I_2}^{\eta_1\eta_2}})^*f_{I_1}^{\eta_1}\|_{L^2(\mu_m)} \bigg)^2  \bigg]^{1/2} \\
%&\le C(\epsilon)\|f\|_{L^2(\mu)}
%+ C\|T\| \Big[ \sum_{I_1 \in \mathcal{D}_{n}} E_{w_n'}[(H_{I_1, \textup{bad}}^{\eta_1})^2] \|f_{I_1}^{\eta_1}\|_{L^2(\mu_m)}^2 \bigg]^{1/2}.
%\end{align*}

The last two terms are symmetric to each other. For example, the first one can be estimated by
\begin{align*}
E_{w_n'}E_{w_n^*}[(H_{I_1, \textup{bad}}^{\eta_1})^2]
&= \sum_{I_1' \in \textup{ch}(I_1)} |\langle h_{I_1,\eta_1} \rangle_{I_1'} |^2 E_{w_n'}E_{w_n^*}[\mu_n(I_{1,\textup{bad}}')] \\
&\le c(\theta) \sum_{I_1' \in \textup{ch}(I_1)} |\langle h_{I_1,\eta_1} \rangle_{I_1'} |^2 \mu_n(I_1') \le c(\theta),
\end{align*}
where $c(\theta) \to 0$, when $\theta \to 0$. Furthermore, we have
\begin{equation*}
\begin{split}
   E_{w_n'}E_{w_n^*}&\Big[ \sum_{I_1 \in \mathcal{D}_{n}} (H_{I_1, \textup{bad}}^{\eta_1})^2 \|f_{I_1}^{\eta_1}\|_{L^2(\mu_m)}^2 \bigg]^{1/2} \\
   &\leq   \Big[ \sum_{I_1 \in \mathcal{D}_{n}} E_{w_n'}E_{w_n^*}[(H_{I_1, \textup{bad}}^{\eta_1})^2] \|f_{I_1}^{\eta_1}\|_{L^2(\mu_m)}^2 \bigg]^{1/2} \\
   &\leq   c(\theta)\Big[ \sum_{I_1 \in \mathcal{D}_{n}} \|f_{I_1}^{\eta_1}\|_{L^2(\mu_m)}^2 \bigg]^{1/2}
   \leq c(\theta)\|f\|_{L^2(\mu)}.
\end{split}
\end{equation*}

%Moreover, there holds that
%\begin{align*}
%E_{w_n'}[(H_{I_1, \textup{bad}}^{\eta_1})^2] &= \sum_{I_1' \in \textup{ch}(I_1)} |\langle h_{I_1,\eta_1} \rangle_{I_1'} |^2 E_{w_n'}[\mu_n(I_{1,\textup{bad}}')] \\
%&\le c(\epsilon) \sum_{I_1' \in \textup{ch}(I_1)} |\langle h_{I_1,\eta_1} \rangle_{I_1'} |^2 \mu_n(I_1') \le c(\epsilon).
%\end{align*}
We have proved that the expectation of the series with the matrix element
\begin{displaymath}
\langle u_{J_2,\kappa_2} \rangle_{J_1} \langle T(h_{I_1, \eta_1} \otimes u_{J_1, \kappa_1}),  h_{I_2, \eta_2} \otimes 1\rangle
\end{displaymath}
is dominated by $C(\theta) \|f\|_{L^2(\mu)} \|g\|_{L^2(\mu)} + c(\theta) \|T\| \|f\|_{L^2(\mu)} \|g\|_{L^2(\mu)}$. Fixing $\theta$ small we establish the bound
$C\|f\|_{L^2(\mu)} \|g\|_{L^2(\mu)} + 2^{-100}\|T\| \|f\|_{L^2(\mu)} \|g\|_{L^2(\mu)}$.

\subsection{Adjacent/adjacent}

We need to estimate the pairing
\begin{equation*}
\begin{split}
  &\langle T(h_{I_1,\eta_1}\otimes u_{J_1,\kappa_1}) , h_{I_2,\eta_2}\otimes u_{J_2,\kappa_2} \rangle \\
  &=\sum_{I_1',I_2',J_1',J_2'}
      \ave{h_{I_1,\eta_1}}_{I_1'}\ave{u_{J_1,\kappa_1}}_{J_1'}
      \ave{h_{I_2,\eta_2}}_{I_2'}\ave{u_{J_2,\kappa_2}}_{J_2'}
      \ave{ T(\chi_{I_1'}\otimes \chi_{J_1'}), \chi_{I_2'}\otimes \chi_{J_2'} },
\end{split}
\end{equation*}
where the summation is over all dyadic children $I_1'$ of $I_1$, $I_2'$ of $I_2$ etc.

\begin{lem}
For adjacent pairs $(I_1,I_2)$ and $(J_1,J_2)$, we have
\begin{equation*}
\begin{split}
  &\abs{\langle T(h_{I_1,\eta_1}\otimes u_{J_1,\kappa_1}) , h_{I_2,\eta_2}\otimes u_{J_2,\kappa_2} \rangle} \\
  &\le C(\epsilon)+C\|T\|\big(H^{\eta_1}_{I_1,\textup{bad}}+H^{\eta_2}_{I_2,\textup{bad}}+H^{\kappa_1}_{J_1,\textup{bad}}+H^{\kappa_2}_{J_2,\textup{bad}}\big),
\end{split}
\end{equation*}
where $\epsilon > 0$ is the surgery parameter both on $\R^n$ and $\R^m$.
\end{lem}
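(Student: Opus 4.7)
The strategy is to extend the surgery technique from the adjacent/nested case to both variables simultaneously, applying surgery with parameter $\epsilon$ to the pair $(I_1,I_2)$ in $\R^n$ and independently to $(J_1,J_2)$ in $\R^m$. As a preparation, I expand the Haar functions as linear combinations of indicators on their children,
\[
  h_{I_i,\eta_i}=\sum_{I_i'\in\textup{ch}(I_i)}\ave{h_{I_i,\eta_i}}_{I_i'}\chi_{I_i'},\qquad u_{J_j,\kappa_j}=\sum_{J_j'\in\textup{ch}(J_j)}\ave{u_{J_j,\kappa_j}}_{J_j'}\chi_{J_j'},
\]
noting the basic bounds $|\ave{h_{I_i,\eta_i}}_{I_i'}|\mu_n(I_i')^{1/2}\lesssim 1$ (and similarly for the $u$-coefficients), so that $\sum_{I_i'}|\ave{h_{I_i,\eta_i}}_{I_i'}|\mu_n(I_i')^{1/2}\lesssim 1$. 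This reduces matters to bounding $\ave{T(\chi_{I_1'}\otimes\chi_{J_1'}),\chi_{I_2'}\otimes\chi_{J_2'}}$ for fixed quadruples of children. Surgery then splits $\chi_{I_i'}=\chi_{I_{i,\textup{sep}}'}+\chi_{I_{i,\partial}'}+\sum_p\chi_{L_p^n}$ and analogously $\chi_{J_j'}=\chi_{J_{j,\textup{sep}}'}+\chi_{J_{j,\partial}'}+\sum_q\chi_{L_q^m}$, yielding $81$ bilinear pieces.

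I then classify these pieces into three disjoint families. \emph{Boundary family:} any term containing at least one factor of the form $\chi_{I_{i,\partial}'}$ or $\chi_{J_{j,\partial}'}$ is controlled by the a priori $\|T\|$ together with Cauchy--Schwarz on the supports; after reassembling the Haar coefficients this delivers precisely $C\|T\|(H^{\eta_1}_{I_1,\textup{bad}}+H^{\eta_2}_{I_2,\textup{bad}}+H^{\kappa_1}_{J_1,\textup{bad}}+H^{\kappa_2}_{J_2,\textup{bad}})$. \emph{Separated family:} if a separated piece but no boundary piece appears, say in $\R^n$, then the $\R^n$-supports are disjoint and separated by $\sim\epsilon\ell(I_1)$. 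I invoke the partial kernel representation in $\R^n$ with the $\R^m$-factors playing the role of $f_2,g_2$, use the partial size estimate together with $C(f_2,g_2)\lesssim\|f_2\|_{L^2(\mu_m)}\|g_2\|_{L^2(\mu_m)}$, and obtain a bound $C(\epsilon)\mu_n(I_1')^{1/2}\mu_n(I_2')^{1/2}\mu_m(J_1')^{1/2}\mu_m(J_2')^{1/2}$; separation in $\R^m$ is symmetric. \emph{Diagonal family:} the remaining pieces are of the form $\ave{T(\chi_{L_p^n}\otimes\chi_{L_q^m}),\chi_{L_{p'}^n}\otimes\chi_{L_{q'}^m}}$. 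If $L_p^n\neq L_{p'}^n$ (or $L_q^m\neq L_{q'}^m$) there is $\epsilon$-separation in one direction and the separated-family argument applies. In the genuinely diagonal case $L_p^n=L_{p'}^n$ and $L_q^m=L_{q'}^m$ the surgery construction guarantees $5L_p^n\subset I_1\cap I_2$ and $5L_q^m\subset J_1\cap J_2$, so the full weak boundedness property yields $\lesssim\mu_n(5L_p^n)\mu_m(5L_q^m)$. Since there are only $\lesssim_\epsilon 1$ surgery cubes in each direction, summing over them and then over the children produces a total $C(\epsilon)$ contribution from the separated and diagonal families.

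The principal technical obstacle I anticipate is the bookkeeping for the mixed off-diagonal subcases of the diagonal family, where a pairing is diagonal in one variable but separated in the other: one must ensure that the partial kernel representation on the separated variable is applicable \emph{at the level of the surgery pieces} and not only at the level of the original children, and that the remaining $\R^m$- (resp.\ $\R^n$-) factors are still controllable in the appropriate $L^2$ sense. This is where it is essential that the surgery places the $L$-cubes strictly inside $I_1\cap I_2$ and away from $\partial I_2$ by a factor $\epsilon$; that geometric fact keeps the separation uniform and allows the partial-kernel bound to convert into the desired $C(\epsilon)$ estimate. Combining the three families then gives the stated inequality.
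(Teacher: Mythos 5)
Your proposal is correct and follows essentially the same route as the paper: reduce to children of the cubes, perform surgery in both variables, bound any piece containing a boundary component via the a priori $\|T\|$ (yielding the $H$-terms), bound pieces with separation in one variable via the partial kernel representation and size estimate (yielding $C(\epsilon)$), and bound the genuinely diagonal $L$-cube pairings via the full weak boundedness property, using that such $L$-cubes satisfy $5L\subset I_1\cap I_2$. The only difference is organizational: the paper telescopes the decomposition into seven successive groups of terms rather than expanding all $3^4$ products at once, which does not change the substance.
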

\begin{proof}
In the surgery of $(J_1, J_2)$ we write $K_i$ for the sets that correspond to the sets $L_i$ in the surgery of $(I_1, I_2)$. We now decompose
\begin{align*}
\langle T(\chi_{I_1}\otimes \chi_{J_1}),\chi_{I_2}\otimes \chi_{J_2}\rangle &= \sum_{\alpha_1 \in \{\textup{sep},\,\partial\}} \langle T(\chi_{I_{1,\alpha_1}}\otimes \chi_{J_1}),\chi_{I_2}\otimes \chi_{J_2}\rangle \\
&+ \sum_{\beta_1 \in \{\textup{sep},\,\partial\}} \langle T(\chi_{I_{1,\Delta}}\otimes \chi_{J_1}),\chi_{I_{2, \beta_1}}\otimes \chi_{J_2}\rangle \\
&+ \sum_{\alpha_2 \in \{\textup{sep},\,\partial\}}  \langle T(\chi_{I_{1,\Delta}}\otimes \chi_{J_{1, \alpha_2}}),\chi_{I_{2,\Delta}}\otimes \chi_{J_2}\rangle \\
&+ \sum_{\beta_2 \in \{\textup{sep},\,\partial\}} \langle T(\chi_{I_{1, \Delta}}\otimes \chi_{J_{1, \Delta}}),\chi_{I_{2,\Delta}}\otimes \chi_{J_{2, \beta_2}}\rangle \\
&+ \sum_{i_1 \ne j_1}  \langle T(\chi_{L_{i_1}}\otimes \chi_{J_{1, \Delta}}),\chi_{L_{j_1}}\otimes \chi_{J_{2, \Delta}}\rangle \\
&+ \sum_{i_1 = j_1} \sum_{i_2 \ne j_2} \langle T(\chi_{L_{i_1}}\otimes \chi_{K_{i_2}}),\chi_{L_{i_1}}\otimes \chi_{K_{j_2}}\rangle \\
&+ \sum_{i_1 = j_1} \sum_{i_2 = j_2}  \langle T(\chi_{L_{i_1}}\otimes \chi_{K_{i_2}}),\chi_{L_{i_1}}\otimes \chi_{K_{i_2}}\rangle.
\end{align*}

If $i_1 = j_1$ and $i_2 = j_2$, then the weak boundedness property gives that the corresponding pairing is dominated by
\begin{displaymath}
C\mu_n(5L_{i_1}) \mu_m(5K_{i_2}) \le C\mu_n(I_1)^{1/2}\mu_n(I_2)^{1/2}\mu_m(J_1)^{1/2}\mu_m(J_2)^{1/2}.
\end{displaymath}
A factor $C(\epsilon)$ is picked up from the summation $\sum_{i_1 = j_1} \sum_{i_2 = j_2} 1$.

The sum of the cases $\alpha_1 = \partial$ and $\beta_1 = \partial$ is dominated by
\begin{displaymath}
\|T\|( \mu_n(I_{1,\textup{bad}})^{1/2} \mu_n(I_2)^{1/2} +  \mu_n(I_1)^{1/2}\mu_n(I_{2,\textup{bad}})^{1/2}) \mu_m(J_1)^{1/2}\mu_m(J_2)^{1/2}.
\end{displaymath}
The sum of the cases $\alpha_2 = \partial$ and $\beta_2 = \partial$ is dominated by
\begin{displaymath}
\|T\| \mu_n(I_1)^{1/2}\mu_n(I_2)^{1/2} ( \mu_m(J_{1,\textup{bad}})^{1/2} \mu_m(J_2)^{1/2} +  \mu_m(J_1)^{1/2}\mu_m(J_{2,\textup{bad}})^{1/2}).
\end{displaymath}

In all the other cases we have separation in $\R^n$ by $\epsilon\ell(I_1) \sim \epsilon \ell(I_2)$ or separation in $\R^m$ by $\epsilon\ell(J_1) \sim \epsilon\ell(J_2)$.
Partial kernel representations give the bound
\begin{displaymath}
C(\epsilon)\mu_n(I_1)^{1/2}\mu_n(I_2)^{1/2}\mu_m(J_1)^{1/2}\mu_m(J_2)^{1/2}.
\end{displaymath}

We have shown that
\begin{align*}
|\langle &T(\chi_{I_1}\otimes \chi_{J_1}),\chi_{I_2}\otimes \chi_{J_2}\rangle| \\
&\le C(\epsilon)\mu_n(I_1)^{1/2}\mu_n(I_2)^{1/2}\mu_m(J_1)^{1/2}\mu_m(J_2)^{1/2} \\
&+ \|T\| ( \mu_n(I_{1,\textup{bad}})^{1/2} \mu_n(I_2)^{1/2} +  \mu_n(I_1)^{1/2}\mu_n(I_{2,\textup{bad}})^{1/2}) \mu_m(J_1)^{1/2}\mu_m(J_2)^{1/2} \\
&+ \|T\| \mu_n(I_1)^{1/2}\mu_n(I_2)^{1/2} ( \mu_m(J_{1,\textup{bad}})^{1/2} \mu_m(J_2)^{1/2} +  \mu_m(J_1)^{1/2}\mu_m(J_{2,\textup{bad}})^{1/2}).
\end{align*}
Apply this to $|\ave{ T(\chi_{I_1'}\otimes \chi_{J_1'}), \chi_{I_2'}\otimes \chi_{J_2'} }|$, where $I_1' \in \textup{ch}(I_1)$, $I_2' \in \textup{ch}(I_2)$, $J_1' \in \textup{ch}(J_1)$ and $J_2' \in \textup{ch}(J_2)$
to get the claim.
\end{proof}

With this lemma at hand, we see in the same way as in the adjacent/nested case that
\begin{align*}
E\Big|  \sum_{I_1,I_2:I_1\sim I_2}\sum_{J_1,J_2:J_1\sim J_2}
   &\langle f, h_{I_1,\eta_1}\otimes u_{J_1,\kappa_1}\rangle \langle g, h_{I_2,\eta_2}\otimes u_{J_2,\kappa_2}\rangle \\
   &\times\langle T(h_{I_1,\eta_1}\otimes u_{J_1,\kappa_1}) , h_{I_2,\eta_2}\otimes u_{J_2,\kappa_2}\rangle\Big|
\end{align*}
is dominated by $C(\epsilon) \|f\|_{L^2(\mu)} \|g\|_{L^2(\mu)} + c(\epsilon) \|T\| \|f\|_{L^2(\mu)} \|g\|_{L^2(\mu)}$, where $c(\epsilon) \to 0$ when $\epsilon \to 0$.
Again, fixing $\epsilon$ small we establish the bound $C\|f\|_{L^2(\mu)} \|g\|_{L^2(\mu)} + 2^{-100}\|T\| \|f\|_{L^2(\mu)} \|g\|_{L^2(\mu)}$.

\section{Nested cubes}\label{sec:nest}

The only case left to deal with consists of quadruples of cubes such that $I_1\subset I_{2,1} \in \textup{ch}(I_2)$ and $J_1\subset J_{2,1} \in \textup{ch}(J_2)$.
We then split
\begin{align*}
\langle T(h_{I_1, \eta_1} \otimes u_{J_1, \kappa_1}),  h_{I_2, \eta_2} \otimes u_{J_2, \kappa_2}\rangle &= \langle T(h_{I_1, \eta_1} \otimes u_{J_1, \kappa_1}),  s_{I_1I_2}^{\eta_2} \otimes s_{J_1J_2}^{\kappa_2} \rangle \\
&+ \langle h_{I_2, \eta_2} \rangle_{I_1} \langle T(h_{I_1, \eta_1} \otimes u_{J_1, \kappa_1}),  1 \otimes s_{J_1J_2}^{\kappa_2} \rangle \\
&+ \langle u_{J_2, \kappa_2} \rangle_{J_1} \langle T(h_{I_1, \eta_1} \otimes u_{J_1, \kappa_1}),  s_{I_1I_2}^{\eta_2} \otimes 1 \rangle \\
&+ \langle h_{I_2, \eta_2} \rangle_{I_1} \langle u_{J_2, \kappa_2} \rangle_{J_1} \langle T(h_{I_1, \eta_1} \otimes u_{J_1, \kappa_1}),  1 \rangle,
\end{align*}
where
\begin{displaymath}
s_{I_1I_2}^{\eta_2} = \chi_{I_{2,1}^c}(h_{I_2, \eta_2} - \langle h_{I_2, \eta_2} \rangle_{I_{2,1}}) = -\langle h_{I_2, \eta_2} \rangle_{I_{2,1}} \chi_{I_{2,1}^c}
 + \mathop{\sum_{I_2' \in \textup{ch}(I_2)}}_{I_2' \subset I_2 \setminus I_{2,1}} h_{I_2, \eta_2} \chi_{I_2'}
\end{displaymath}
and
\begin{displaymath}
s_{J_1J_2}^{\kappa_2} = \chi_{J_{2,1}^c}(u_{J_2, \kappa_2} - \langle u_{J_2, \kappa_2} \rangle_{J_{2,1}}) = -\langle u_{J_2, \kappa_2} \rangle_{J_{2,1}} \chi_{J_{2,1}^c}
 + \mathop{\sum_{J_2' \in \textup{ch}(J_2)}}_{J_2' \subset J_2 \setminus J_{2,1}} u_{J_2, \kappa_2} \chi_{J_2'}.
\end{displaymath}

\subsection{Terms involving separation}
We deal with the first three terms in the splitting, where at least one of $s^{\eta_2}_{I_1I_2}$ or $s^{\kappa_2}_{J_1J_2}$ appears.

We first consider the sum having the matrix element $ \langle T(h_{I_1, \eta_1} \otimes u_{J_1, \kappa_1}),  s_{I_1I_2}^{\eta_2} \otimes s_{J_1J_2}^{\kappa_2} \rangle$.
\begin{lem}
There holds that
\begin{displaymath}
|\langle T(h_{I_1, \eta_1} \otimes u_{J_1, \kappa_1}),  s_{I_1I_2}^{\eta_2} \otimes s_{J_1J_2}^{\kappa_2} \rangle| \lesssim A_{I_1I_2}^{\textup{in}} A_{J_1J_2}^{\textup{in}}.
\end{displaymath}
\end{lem}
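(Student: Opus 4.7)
The plan is to exploit three facts: the supports $\operatorname{supp} h_{I_1,\eta_1}\subset I_1\subset I_{2,1}$ and $\operatorname{supp} s^{\eta_2}_{I_1I_2}\subset I_{2,1}^c$ are disjoint in the first variable (and analogously in the second), so the full kernel representation applies; the nested regime $\ell(I_1)\leq 2^{-r}\ell(I_2)<2^\ell$ forces $\eta_1\neq 0$ and likewise $\kappa_1\neq 0$, so $h_{I_1,\eta_1}$ and $u_{J_1,\kappa_1}$ have vanishing $\mu$-mean; and goodness of $I_1$, $J_1$ supplies the separation $d(I_1,I_{2,1}^c)\geq 4\ell(I_1)^{\gamma_n}\ell(I_{2,1})^{1-\gamma_n}$ (and analogously for $J$) needed for the mixed H\"older condition.

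Fixing arbitrary $z\in I_1$ and $w\in J_1$, I would write the pairing as the four-fold integral and use the cancellation to replace $K(x,y)$ by the inclusion--exclusion difference $K(x,y)-K(x,(y_1,w))-K(x,(z,y_2))+K(x,(z,w))$. The mixed H\"older estimate then dominates this by $\ell(I_1)^\alpha\ell(J_1)^\beta/[|x_1-z|^\alpha\lambda_n(z,|x_1-z|)\,|x_2-w|^\beta\lambda_m(w,|x_2-w|)]$, up to a constant (using the symmetry $\lambda_n(z,|x_1-z|)\sim\lambda_n(x_1,|x_1-y_1|)$). Factoring and bounding $\|h_{I_1,\eta_1}\|_{L^1(\mu_n)}\leq\mu_n(I_1)^{1/2}$ as well as its $J$-counterpart, the claim reduces by symmetry to establishing
\[
  \ell(I_1)^\alpha\mu_n(I_1)^{1/2}\int_{I_{2,1}^c}\frac{|s^{\eta_2}_{I_1I_2}(x_1)|\,d\mu_n(x_1)}{|x_1-z|^\alpha\lambda_n(z,|x_1-z|)}\lesssim A^{\textup{in}}_{I_1I_2}.
\]

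I would then split $s^{\eta_2}_{I_1I_2}=-\langle h_{I_2,\eta_2}\rangle_{I_{2,1}}\chi_{I_{2,1}^c}+\sum_{I_2'\neq I_{2,1}}h_{I_2,\eta_2}\chi_{I_2'}$. The constant piece is routine: $|\langle h_{I_2,\eta_2}\rangle_{I_{2,1}}|\leq\mu_n(I_{2,1})^{-1/2}$ by Cauchy--Schwarz, and the standard annular tail estimate $\int_{|x-z|\geq d}|x-z|^{-\alpha}\lambda_n(z,|x-z|)^{-1}\,d\mu_n\lesssim d^{-\alpha}$, with $d:=d(I_1,I_{2,1}^c)$, combined with the easy inequality $d^\alpha\gtrsim\ell(I_1)^{\alpha/2}\ell(I_2)^{\alpha/2}$ (which uses $\gamma_n\leq 1/2$), produces the bound $A^{\textup{in}}_{I_1I_2}$ with room to spare.

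The main obstacle is the Haar piece. For each sibling $I_2'\neq I_{2,1}$ I would combine $\|h_{I_2,\eta_2}\chi_{I_2'}\|_{L^1(\mu_n)}\leq\mu_n(I_2')^{1/2}$ with the crude uniform bound $|x_1-z|^{-\alpha}\lambda_n(z,|x_1-z|)^{-1}\leq d^{-\alpha}\lambda_n(z,d)^{-1}$ valid on $I_2'$, and collect the $\leq 2^n-1$ siblings via Cauchy--Schwarz to get $\mu_n(I_2)^{1/2}/(d^\alpha\lambda_n(z,d))$. To absorb the unwanted $\lambda_n(z,d)^{-1}$, the key step is the upper-doubling bound $\mu_n(I_{2,1})^{1/2}\mu_n(I_2)^{1/2}\lesssim\lambda_n(z,\ell(I_2))$ (valid since $z\in I_2$), together with the doubling growth $\lambda_n(z,\ell(I_2))\lesssim(\ell(I_2)/d)^{d_{\lambda_n}}\lambda_n(z,d)$. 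Substituting $d\sim\ell(I_1)^{\gamma_n}\ell(I_2)^{1-\gamma_n}$, the net exponent on $\ell(I_1)/\ell(I_2)$ works out to $\alpha/2-\gamma_n(\alpha+d_{\lambda_n})$, which is identically zero by the very definition $\gamma_n=\alpha/[2(\alpha+d_{\lambda_n})]$. This exact calibration is the only place where the precise choice of $\gamma_n$ is essential, and I expect it to be the subtlest part of the argument. Running the analogous argument on $\R^m$ with $\beta,d_{\lambda_m},\gamma_m$ yields $A^{\textup{in}}_{J_1J_2}$ and completes the proof.
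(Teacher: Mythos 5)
Your proposal is correct and follows essentially the same route as the paper: the same splitting of $s^{\eta_2}_{I_1I_2}$ and $s^{\kappa_2}_{J_1J_2}$ into a constant piece and a siblings piece, the same four-fold H\"older difference justified by $\eta_1,\kappa_1\neq 0$ and goodness, and the same annular tail estimates; your explicit verification of the calibration $\gamma_n(\alpha+d_{\lambda_n})=\alpha/2$ for the siblings piece is precisely what the paper hides inside its appeal to the proof of Lemma~\ref{ccss} (i.e.\ \cite[Lemma 6.2]{HM}) together with the bound $\mu_n(I_2')^{1/2}/\lambda_n(z,\ell(I_2'))\lesssim\mu_n(I_{2,1})^{-1/2}$. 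One cosmetic slip: the estimate you apply to the inclusion--exclusion difference is the \emph{full} H\"older condition, not the mixed H\"older-and-size condition.
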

\begin{proof}
We write out $\langle T(h_{I_1, \eta_1} \otimes u_{J_1, \kappa_1}),  s_{I_1I_2}^{\eta_2} \otimes s_{J_1J_2}^{\kappa_2} \rangle$ as the following sum of four terms:
\begin{align*}
& \langle h_{I_2, \eta_2} \rangle_{I_{2,1}} \langle u_{J_2, \kappa_2} \rangle_{J_{2,1}} \langle T(h_{I_1, \eta_1} \otimes u_{J_1, \kappa_1}),  \chi_{I_{2,1}^c} \otimes  \chi_{J_{2,1}^c}\rangle \\
&- \langle h_{I_2, \eta_2} \rangle_{I_{2,1}} \mathop{\sum_{J_2' \in \textup{ch}(J_2)}}_{J_2' \subset J_2 \setminus J_{2,1}}  \langle T(h_{I_1, \eta_1} \otimes u_{J_1, \kappa_1}),  \chi_{I_{2,1}^c} \otimes  u_{J_2, \kappa_2} \chi_{J_2'}\rangle \\
&- \langle u_{J_2, \kappa_2} \rangle_{J_{2,1}} \mathop{\sum_{I_2' \in \textup{ch}(I_2)}}_{I_2' \subset I_2 \setminus I_{2,1}} \langle T(h_{I_1, \eta_1} \otimes u_{J_1, \kappa_1}),  h_{I_2, \eta_2} \chi_{I_2'} \otimes  \chi_{J_{2,1}^c}\rangle \\
&+  \mathop{\sum_{I_2' \in \textup{ch}(I_2)}}_{I_2' \subset I_2 \setminus I_{2,1}} \mathop{\sum_{J_2' \in \textup{ch}(J_2)}}_{J_2' \subset J_2 \setminus J_{2,1}} 
\langle T(h_{I_1, \eta_1} \otimes u_{J_1, \kappa_1}),  h_{I_2, \eta_2} \chi_{I_2'} \otimes  u_{J_2, \kappa_2} \chi_{J_2'}\rangle.
\end{align*}

We handle the second term here (the rest are bounded using the same tools). We record that
\begin{displaymath}
d(I_1, I_{2,1}^c) \ge 4\ell(I_1)^{\gamma_n}\ell(I_{2,1})^{1-\gamma_n} > 2\ell(I_1)^{\gamma_n}\ell(I_2)^{1-\gamma_n}
\end{displaymath}
and
\begin{align*}
d(J_1, J_2') \ge d(J_1, J_{2,1}^c) &\ge 4\ell(J_1)^{\gamma_m}\ell(J_{2,1})^{1-\gamma_m} \\ &= 4\ell(J_1)^{\gamma_m}\ell(J_2')^{1-\gamma_m}  > 2\ell(J_1)^{\gamma_m}\ell(J_2)^{1-\gamma_m}.
\end{align*}
Recall that $\eta_1 \ne 0$ and $\kappa_1 \ne 0$ (that is, the corresponding Haar functions have zero means), since for example $I_1$ is (much) smaller than $I_2$ and so $\ell(I_1) < 2^{\ell}$.
Thus, with any $z \in I_1$ and $w \in J_1$ we may write
\begin{align*}
&\langle T(h_{I_1, \eta_1} \otimes u_{J_1, \kappa_1}),  \chi_{I_{2,1}^c} \otimes  u_{J_2, \kappa_2} \chi_{J_2'}\rangle \\
&= \int_{I_1} \int_{J_1} \int_{I_{2,1}^c} \int_{J_2'} [K(x,y) - K(x, (y_1, w)) - K(x, (z, y_2)) + K(x, (z,w))] \\
&\hspace{3.4cm}\times h_{I_1, \eta_1}(y_1) u_{J_1, \kappa_1}(y_2)  u_{J_2, \kappa_2}(x_2) \,d\mu_m(x_2)\,d\mu_n(x_1)\,d\mu_m(y_2) \, d\mu_n(y_1).
\end{align*}
We get the bound
\begin{align*}
&|\langle T(h_{I_1, \eta_1} \otimes u_{J_1, \kappa_1}),  \chi_{I_{2,1}^c} \otimes  u_{J_2, \kappa_2} \chi_{J_2'}\rangle| \\
&\lesssim \mu_n(I_1)^{1/2}\ell(I_1)^{\alpha} \int_{\R^n \setminus B(z, d(I_1, I_{2,1}^c))} \frac{|x_1 -z|^{-\alpha}}{\lambda_n(z, |x_1-z|)}\,d\mu_n(x_1) \\
& \hspace{4cm} \times \mu_m(J_1)^{1/2}\mu_m(J_2')^{1/2}\frac{\ell(J_1)^{\beta}}{d(J_1,J_2')^{\beta}\lambda_m(w, d(J_1,J_2'))}.
\end{align*}
As in the proof of Lemma \ref{ccss} we get that
\begin{align*}
\frac{\ell(J_1)^{\beta}}{d(J_1,J_2')^{\beta}\lambda_m(w, d(J_1,J_2'))} &\lesssim \frac{\ell(J_1)^{\beta/2}\ell(J_2')^{\beta/2}}{D(J_1,J_2')^{\beta}\lambda_m(w, D(J_1,J_2'))} \\
&\le \frac{\ell(J_1)^{\beta/2}}{\ell(J_2')^{\beta/2}\lambda_m(w, \ell(J_2'))}.
\end{align*}
As we have also previously seen, there holds that
\begin{displaymath}
\int_{\R^n \setminus B(z, d(I_1, I_{2,1}^c))} \frac{|x_1 -z|^{-\alpha}}{\lambda_n(z, |x_1-z|)}\,d\mu_n(x_1) \lesssim d(I_1, I_{2,1}^c)^{-\alpha} \lesssim \ell(I_1)^{-\alpha/2}\ell(I_2)^{-\alpha/2}
\end{displaymath}
and
\begin{displaymath}
\frac{\mu_m(J_2')^{1/2}}{\lambda_m(w, \ell(J_2'))} \lesssim \mu_m(J_{2,1})^{-1/2}.
\end{displaymath}
It remains to recall that $|\langle h_{I_2,\eta_2} \rangle_{I_{2,1}}| \le \mu_n(I_{2,1})^{-1/2}$. Indeed, putting these estimates together readily yields the bound $A_{I_1I_2}^{\textup{in}} A_{J_1J_2}^{\textup{in}}$.
\end{proof}
Combining the previous lemma with Lemma \ref{InSummation} immediately gives that the summation with the matrix element $\langle T(h_{I_1, \eta_1} \otimes u_{J_1, \kappa_1}),  s_{I_1I_2}^{\eta_2} \otimes s_{J_1J_2}^{\kappa_2} \rangle$
is dominated by $\|f\|_{L^2(\mu)}\|g\|_{L^2(\mu)}$.

We now deal with the sum having the matrix element $\langle u_{J_2, \kappa_2} \rangle_{J_1} \langle T(h_{I_1, \eta_1} \otimes u_{J_1, \kappa_1}),  s_{I_1I_2}^{\eta_2} \otimes 1 \rangle$.
Just like in the separated/nested case, we find out that the corresponding sum collapses to
\begin{displaymath}
\Big\langle \sum_{\eta_1, \kappa_1, \eta_2} \mathop{\mathop{\sum_{I_1, I_2 \textup{ good}}}_{\ell(I_1) <  2^{-r}\ell(I_2)}}_{I_1 \subset I_2}
h_{I_2, \eta_2} \otimes (\Pi^{\kappa_1}_{b_{I_1I_2}^{\eta_1\eta_2}})^*f_{I_1}^{\eta_1}, g_{\textup{good}}\Big\rangle,
\end{displaymath}
where $f_{I_1}^{\eta_1} = \langle f, h_{I_1,\eta_1} \rangle_1$ and $b_{I_1I_2}^{\eta_1\eta_2} = \langle T^*(s_{I_1I_2}^{\eta_2} \otimes 1), h_{I_1,\eta_1}\rangle_1$.
Thus, it is enough to fix $\eta_1 \ne 0$, $\kappa_1 \ne 0$ and $\eta_2$, and then show that
\begin{displaymath}
\Big\|  \mathop{\mathop{\sum_{I_1, I_2 \textup{ good}}}_{\ell(I_1) <  2^{-r}\ell(I_2)}}_{I_1 \subset I_2}
h_{I_2, \eta_2} \otimes (\Pi^{\kappa_1}_{b_{I_1I_2}^{\eta_1\eta_2}})^*f_{I_1}^{\eta_1} \Big\|_{L^2(\mu)} \lesssim \|f\|_{L^2(\mu)}.
\end{displaymath}

An estimate of by now familiar nature shows that $\|b_{I_1I_2}^{\eta_1\eta_2}\|_{\textup{BMO}^2_3(\mu_m)} \lesssim A_{I_1I_2}^{\textup{in}}$.
We have by orthonormality and Lemma~\ref{InSummation} that
\begin{align*}
&\Big\|  \mathop{\mathop{\sum_{I_1, I_2 \textup{ good}}}_{\ell(I_1) <  2^{-r}\ell(I_2)}}_{I_1 \subset I_2}
h_{I_2, \eta_2} \otimes (\Pi^{\kappa_1}_{b_{I_1I_2}^{\eta_1\eta_2}})^*f_{I_1}^{\eta_1} \Big\|_{L^2(\mu)} \\
&\lesssim\Big(\sum_{I_2\textup{ good}}\Big[\sum_{\substack{I_1\textup{ good}\\ \ell(I_1)<2^{-r}\ell(I_2) \\ I_1\subset I_2 }} A_{I_1 I_2}^{\textup{in}}
   \|f^{\eta_1}_{I_1}\|_{L^2(\mu_m)}\Big]^2\Big)^{1/2} \\
 &\lesssim\Big(\sum_{I_1}\|f^{\eta_1}_{I_1}\|_{L^2(\mu_m)}^2\Big)^{1/2}\leq \|f\|_{L^2(\mu)}.
%&\lesssim \sum_{w=1}^{2^n} \sum_{u=r+1}^{\infty} \Bigg[ \sum_k \mathop{\sum_{I_2 \in \mathcal{D}_n'}}_{\ell(I_2) = 2^{-k}}
%\Big( \mathop{\mathop{\sum_{I_1 \in \mathcal{D}_n}}_{\ell(I_1) = 2^{-u-k}}}_{I_1 \subset I_2^w} \Big(\frac{\ell(I_1)}{\ell(I_2)}\Big)^{\alpha/2} \Big(\frac{\mu_n(I_1)}{\mu_n(I_2^w)}\Big)^{1/2} \|f_{I_1}^{\eta_1}\|_{L^2(\mu_m)} \Big)^2 \Bigg]^{1/2} \\
%&\le \sum_{w=1}^{2^n} \sum_{u=r+1}^{\infty} 2^{-\alpha u/2} \Bigg[ \sum_k \mathop{\sum_{I_2 \in \mathcal{D}_n'}}_{\ell(I_2) = 2^{-k}}
%\frac{1}{\mu_n(I_2^w)} \Big( \mathop{\mathop{\sum_{I_1 \in \mathcal{D}_n}}_{\ell(I_1) = 2^{-u-k}}}_{I_1 \subset I_2^w} \mu_n(I_1) \Big)\\
%&\hspace{7.5cm} \times \Big( \mathop{\mathop{\sum_{I_1 \in \mathcal{D}_n}}_{\ell(I_1) = 2^{-u-k}}}_{I_1 \subset I_2^w} \|f_{I_1}^{\eta_1}\|_{L^2(\mu_m)}^2\Big) \Bigg]^{1/2}.
\end{align*}
%This is further dominated by
%\begin{align*}
%&\sum_{w=1}^{2^n} \sum_{u=r+1}^{\infty} 2^{-\alpha u/2} \Bigg[ \sum_k  \int_{\R^m} \int_{\R^n} |(P_{\mathcal{D}_n, u+k}^{\eta_1}f_{x_2})(x_1)|^2\,d\mu_n(x_1)\,d\mu_m(x_2) \Bigg]^{1/2} \\
%&\le \sum_{w=1}^{2^n} \sum_{u=r+1}^{\infty} 2^{-\alpha u/2} \|f\|_{L^2(\mu)} \lesssim  \|f\|_{L^2(\mu)}.
%\end{align*}

Note that the proof of the boundedness of the summation with the matrix element $\langle h_{I_2, \eta_2} \rangle_{I_1} \langle T(h_{I_1, \eta_1} \otimes u_{J_1, \kappa_1}),  1 \otimes s_{J_1J_2}^{\kappa_2} \rangle$
is analogous.

\subsection{Full paraproducts}
We are reduced to considering the summation with the matrix element $\langle h_{I_2, \eta_2} \rangle_{I_1} \langle u_{J_2, \kappa_2} \rangle_{J_1} \langle T(h_{I_1, \eta_1} \otimes u_{J_1, \kappa_1}),  1 \rangle$.
We fix $\eta_1 \ne 0$ and $\kappa_1 \ne 0$. We need to simplify the summation
\begin{align*}
\sum_{\eta_2, \kappa_2} \sum_{I_1\in  \mathcal{D}_{n,\textup{good}}}& \sum_{J_1 \in  \mathcal{D}_{m,\textup{good}}} \mathop{\sum_{I_2 \in \mathcal{D}_n'}}_{2^r\ell(I_1) < \ell(I_2) \le 2^{\ell}} \mathop{\sum_{J_2 \in \mathcal{D}_m'}}_{2^r\ell(J_1) < \ell(J_2) \le 2^{\ell}}
\langle h_{I_2, \eta_2} \rangle_{I_1} \langle u_{J_2, \kappa_2} \rangle_{J_1} \\
& \times \langle g_{\textup{good}}, h_{I_2, \eta_2} \otimes u_{J_2, \kappa_2}\rangle \langle T^*1, h_{I_1, \eta_1} \otimes u_{J_1, \kappa_1} \rangle
\langle f, h_{I_1, \eta_1} \otimes u_{J_1, \kappa_1}\rangle.
\end{align*}
Notice that
\begin{displaymath}
\Big\langle \mathop{\sum_{J_2 \in \mathcal{D}_m'}}_{2^r\ell(J_1) < \ell(J_2) \le 2^{\ell}} \sum_{\kappa_2} \langle g_{\textup{good}}, h_{I_2, \eta_2} \otimes u_{J_2, \kappa_2}\rangle u_{J_2, \kappa_2} \Big\rangle_{J_1}
= \langle \langle g_{\textup{good}}, h_{I_2, \eta_2} \rangle_1 \rangle_{S(J_1)},
\end{displaymath}
where $S(J_1) \in \mathcal{D}_m'$ is the unique cube for which $\ell(S(J_1)) = 2^r\ell(J_1)$ and $J_1 \subset S(J_1)$. Continue to notice that
\begin{align*}
\Big\langle  \mathop{\sum_{I_2 \in \mathcal{D}_n'}}_{2^r\ell(I_1) < \ell(I_2) \le 2^{\ell}} \sum_{\eta_2}  h_{I_2, \eta_2} \otimes \langle g_{\textup{good}}, h_{I_2, \eta_2} \rangle_1 \Big\rangle_{I_1 \times S(J_1)} = \langle g_{\textup{good}} \rangle_{S(I_1) \times S(J_1)},
\end{align*}
where $S(I_1) \in \mathcal{D}_n'$ is the unique cube for which $\ell(S(I_1)) = 2^r\ell(I_1)$ and $I_1 \subset S(I_1)$. Therefore, the expression we started with collapses to
\begin{align*}
 &\sum_{I_1\in  \mathcal{D}_{n,\textup{good}}} \sum_{J_1 \in  \mathcal{D}_{m,\textup{good}}} \langle g_{\textup{good}} \rangle_{S(I_1) \times S(J_1)} \langle T^*1, h_{I_1, \eta_1} \otimes u_{J_1, \kappa_1} \rangle 
\langle f, h_{I_1, \eta_1} \otimes u_{J_1, \kappa_1}\rangle \\
&= \sum_{I_2 \in \mathcal{D}_n'}  \mathop{\mathop{\sum_{I_1 \in \mathcal{D}_{n, \textup{good}}}}_{I_1 \subset I_2}}_{\ell(I_1) = 2^{-r}\ell(I_2)} \sum_{J_2 \in \mathcal{D}_m'}  \mathop{\mathop{\sum_{J_1 \in \mathcal{D}_{m, \textup{good}}}}_{J_1 \subset J_2}}_{\ell(J_1) = 2^{-r}\ell(J_2)}
\langle g_{\textup{good}} \rangle_{I_2 \times J_2} \langle T^*1, h_{I_1, \eta_1} \otimes u_{J_1, \kappa_1} \rangle \\
& \hspace{9.5cm} \times \langle f, h_{I_1, \eta_1} \otimes u_{J_1, \kappa_1}\rangle.
\end{align*}
One can write this as a pairing $\langle f, \Pi_{T^*1}^{\eta_1\kappa_1} g_{\textup{good}}\rangle$, where
\begin{displaymath}
\Pi_b^{\eta_1\kappa_1} u = \sum_{I_2 \in \mathcal{D}_n'}  \mathop{\mathop{\sum_{I_1 \in \mathcal{D}_{n, \textup{good}}}}_{I_1 \subset I_2}}_{\ell(I_1) = 2^{-r}\ell(I_2)} \sum_{J_2 \in \mathcal{D}_m'}
\mathop{\mathop{\sum_{J_1 \in \mathcal{D}_{m, \textup{good}}}}_{J_1 \subset J_2}}_{\ell(J_1) = 2^{-r}\ell(J_2)} \langle u \rangle_{I_2 \times J_2} \langle b, h_{I_1, \eta_1} \otimes u_{J_1, \kappa_1} \rangle
h_{I_1, \eta_1} \otimes u_{J_1, \kappa_1}.
\end{displaymath}
\begin{prop}
There holds that
\begin{displaymath}
\|\Pi_b^{\eta_1\kappa_1} u\|_{L^2(\mu)} \le 4\|b\|_{\textup{BMO}_{\textup{prod}}(\mu)} \|u\|_{L^2(\mu)}, \qquad \eta_1, \kappa_1 \ne 0.
\end{displaymath}
\end{prop}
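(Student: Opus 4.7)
The plan is to prove the proposition by combining orthonormality of the product Haar system, a layer-cake decomposition, the $\textup{BMO}_{\textup{prod}}(\mu)$ testing condition on level sets, and the $L^2$-boundedness of the dyadic strong maximal function. First, since the paraproduct is linear in $b$ and produces Haar coefficients indexed by $(I_1,\eta_1,J_1,\kappa_1)$, I would use orthonormality of $h_{I_1,\eta_1}\otimes u_{J_1,\kappa_1}$ in $L^2(\mu)$ to compute
\begin{equation*}
  \|\Pi_b^{\eta_1\kappa_1}u\|_{L^2(\mu)}^2
  =\sum_{I_1\textup{ good}}\sum_{J_1\textup{ good}}
     |\langle u\rangle_{S(I_1)\times S(J_1)}|^2
     |\langle b,h_{I_1,\eta_1}\otimes u_{J_1,\kappa_1}\rangle|^2,
\end{equation*}
and then regroup by the parent $S=S(I_1)\times S(J_1)\in\mathcal{D}'$, observing that $\operatorname{gen}(I_1\times J_1)=\operatorname{gen}(S)+(r,r)$.

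Next, I would apply a layer-cake identity
\begin{equation*}
  \sum_{S\in\mathcal{D}'}|\langle u\rangle_S|^2 B_S
  =\int_0^\infty 2t\Big[\sum_{S:|\langle u\rangle_S|>t}B_S\Big]\,dt,
\end{equation*}
where $B_S$ denotes the inner sum of $|\langle b,h_{I_1,\eta_1}\otimes u_{J_1,\kappa_1}\rangle|^2$ over good $R=I_1\times J_1\subset S$ with $\operatorname{gen}(R)=\operatorname{gen}(S)+(r,r)$. For each $t>0$ I would introduce
\begin{equation*}
  \Omega_t:=\{x\in\R^{n+m}:M_{\mathcal{D}'}|u|(x)>t\},
\end{equation*}
with $M_{\mathcal{D}'}$ the dyadic strong maximal function relative to rectangles in $\mathcal{D}'$. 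By weak $L^2$ boundedness, $\mu(\Omega_t)<\infty$, and $\Omega_t$ is literally a union of rectangles $S\in\mathcal{D}'$ with $\langle |u|\rangle_S>t$, so it meets the structural requirement in Definition~\ref{def:BMOprod}; bounded truncations from Remark~\ref{rem:BMOprod} handle any residual measurability worry. Every $S$ with $|\langle u\rangle_S|>t$ lies inside $\Omega_t$, so, since our single term $(\eta_1,\kappa_1)$ is dominated by the full sum over all $\eta,\kappa\ne 0$ in the BMO$_{\textup{prod}}$ definition,
\begin{equation*}
  \sum_{S:|\langle u\rangle_S|>t}B_S
  \le\sum_{S\subset\Omega_t}B_S
  \le\|b\|^2_{\textup{BMO}_{\textup{prod}}(\mu)}\mu(\Omega_t).
\end{equation*}

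Substituting back and recognizing the layer-cake formula for $\|M_{\mathcal{D}'}|u|\|_{L^2(\mu)}^2$,
\begin{equation*}
  \|\Pi_b^{\eta_1\kappa_1}u\|_{L^2(\mu)}^2
  \le\|b\|^2_{\textup{BMO}_{\textup{prod}}(\mu)}
     \int_0^\infty 2t\,\mu(\Omega_t)\,dt
  =\|b\|^2_{\textup{BMO}_{\textup{prod}}(\mu)}\|M_{\mathcal{D}'}|u|\|_{L^2(\mu)}^2.
\end{equation*}
Finally, since $\mu=\mu_n\times\mu_m$ is a product measure, Fubini lets me dominate $M_{\mathcal{D}'}$ by the composition of one-parameter dyadic maximal functions in each variable. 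Doob's inequality gives each one-parameter operator norm $\le 2$ on $L^2$, so $\|M_{\mathcal{D}'}\|_{L^2(\mu)\to L^2(\mu)}\le 4$, producing the claimed constant $4$.

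The only genuine point that requires care is the verification that $\Omega_t$ satisfies the ``every $x\in\Omega$ is inside some $S\in\mathcal{D}'$ with $S\subset\Omega$'' hypothesis of Definition~\ref{def:BMOprod}, which is immediate from the way $\Omega_t$ is built as a union of such $S$; everything else is bookkeeping.
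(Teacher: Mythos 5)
Your proposal is correct and follows essentially the same route as the paper: orthonormality to reduce to the square sum of coefficients, a layer-cake decomposition over the level sets of the dyadic strong maximal function $M_{\mathcal{D}'}$, the $\textup{BMO}_{\textup{prod}}(\mu)$ testing condition applied to $\Omega_t=\{M_{\mathcal{D}'}u>t\}$, and $\|M_{\mathcal{D}'}\|_{L^2(\mu)\to L^2(\mu)}\le 4$ via iterated Doob to get the constant. Your extra remarks (that $\Omega_t$ satisfies the structural hypothesis of Definition~\ref{def:BMOprod}, and that a single $(\eta_1,\kappa_1)$ term is dominated by the full sum in the BMO definition) are correct points the paper leaves implicit.
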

\begin{proof}
There holds that
\begin{align*}
\|\Pi_b^{\eta_1\kappa_1} u\|_{L^2(\mu)}^2 &\le \sum_{S \in \mathcal{D}'} \sum_{\substack{R = I \times J \in\mathcal{D}\textup{ good},\ R\subset S\\ \operatorname{gen}(R)=\operatorname{gen}(S)+(r,r)}}
|\langle u \rangle_S|^2 |\langle b, h_{I, \eta_1} \otimes u_{J, \kappa_1} \rangle|^2 \\
&= 2 \int_0^{\infty} \mathop{\sum_{S \in \mathcal{D}'}}_{|\langle u\rangle_S| > t} \sum_{\substack{R = I \times J \in\mathcal{D}\textup{ good},\ R\subset S\\ \operatorname{gen}(R)=\operatorname{gen}(S)+(r,r)}}|\langle b, h_{I, \eta_1} \otimes u_{J, \kappa_1} \rangle|^2 t\,dt \\
&\le 2 \int_0^{\infty} \mathop{\sum_{S \in \mathcal{D}'}}_{S \subset \{M_{\mathcal{D'}}u > t\}} \sum_{\substack{R = I \times J \in\mathcal{D}\textup{ good},\ R\subset S\\ \operatorname{gen}(R)=\operatorname{gen}(S)+(r,r)}}|\langle b, h_{I, \eta_1} \otimes u_{J, \kappa_1} \rangle|^2 t\,dt \\
&\le \|b\|_{\textup{BMO}_{\textup{prod}}(\mu)}^2 2 \int_0^{\infty} \mu(\{M_{\mathcal{D'}}u > t\})t\,dt \\
&= \|b\|_{\textup{BMO}_{\textup{prod}}(\mu)}^2 \|M_{\mathcal{D'}}u\|_{L^2(\mu)}^2 \\
&\le 16\|b\|_{\textup{BMO}_{\textup{prod}}(\mu)}^2 \|u\|_{L^2(\mu)}^2,
\end{align*}
which is what we wanted to prove.
\end{proof}

\section{Mixed paraproducts}\label{sec:mixed}

In the last few sections, we have completed the estimation of the part of the Haar series expansion of $\ave{Tf,g}$ involving quadruples of cubes with $\ell(I_1)\leq\ell(I_2)$ and $\ell(J_1)\leq\ell(J_2)$. The other three subseries with one or both of these inequalities reversed are completely symmetric, except for one detail: in the case that the two inequalities go in different directions, we end up with a different form of the full paraproduct, which we call mixed.

The following explanation reveals how it appears.
In the case $\ell(I_1) \le \ell(I_2)$ and $\ell(J_1) > \ell(J_2)$ one needs to deal with a full paraproduct, which is different in an essential way:
\begin{align*}
\Pi_{\textup{mixed},\,b}^{\eta_1\kappa_2}u
&:=  \sum_{I_2 \in \mathcal{D}_n'}  \mathop{\mathop{\sum_{I_1 \in \mathcal{D}_{n, \textup{good}}}}_{I_1 \subset I_2}}_{\ell(I_1) = 2^{-r}\ell(I_2)}
\sum_{J_1 \in \mathcal{D}_m} \mathop{\mathop{\sum_{J_2 \in \mathcal{D}_{m, \textup{good}}'}}_{J_2 \subset J_1}}_{\ell(J_2) = 2^{-r}\ell(J_1)}
\big\langle u, h_{I_1, \eta_1} \otimes \frac{\chi_{J_1}}{\mu_m(J_1)} \big\rangle \\
&\hspace{6.7cm} \times \langle b, h_{I_1, \eta_1} \otimes u_{J_2, \kappa_2} \rangle  \frac{\chi_{I_2}}{\mu_n(I_2)} \otimes u_{J_2, \kappa_2},
\end{align*}
$\eta_1 \ne 0$, $\kappa_2 \ne 0$. In the actual summation one has $b = T_1(1)$.
We need to separately demonstrate that $\|\Pi_{\textup{mixed},\,b}^{\eta_1\kappa_2}u\|_{L^2(\mu)} \lesssim \|b\|_{\textup{BMO}_{\textup{prod}}(\mu)} \|u\|_{L^2(\mu)}$.
Following \cite{PV}, we dualize with a $v\in L^2(\mu)$ and write
\begin{equation*}
  \ave{\Pi_{\textup{mixed},b}^{\eta_1\eta_2}u,v}=\pair{b}{\Lambda(u,v)}
\end{equation*}
for a suitable bilinear form $\Lambda$. We then need the two estimates
\begin{equation*}
  \abs{\ave{b,f}}\lesssim\|b\|_{\textup{BMO}_{\textup{prod}}(\mu)}\|f\|_{H^1(\mu)},\qquad
  \|\Lambda(u,v)\|_{H^1(\mu)}\lesssim\|u\|_{L^2(\mu)}\|v\|_{L^2(\mu)}
\end{equation*}
for a suitable (ad hoc) ``$H^1$''  norm $\|\cdot\|_{H^1(\mu)}$.

\subsection{$H^1$--BMO type duality inequality}\label{sec:dual}

We denote $\mathcal{C}_K:=\{I\in\mathcal{D}_n\textrm{ good}:I\subset K,\operatorname{gen}(I)=\operatorname{gen}(K)+r\}$ for $K\in\mathcal{D}_n'$, similarly $\mathcal{C}_L\subset\mathcal{D}_m$ for $L\in\mathcal{D}_m'$ and $\mathcal{C}_S:=\mathcal{C}_K\times\mathcal{C}_L\subset\mathcal{D}$ for $S=K\times L\in\mathcal{D}'=\mathcal{D}_n'\times\mathcal{D}_m'$. Also, we denote $S(I) = K$ if $I \in \mathcal{C}_K$.

\begin{defn}\label{defn:squaredef}
We define the square function
\begin{align*}
S_{\mathcal{D}_n\mathcal{D}_m}^{\mathcal{D}_n'\mathcal{D}_m'}f &= \Big(\mathop{\sum_{\eta_1 \ne 0}}_{\kappa_1 \ne 0} \mathop{\sum_{I_1 \in \mathcal{D}_{n, \textup{good}}}}_{J_1 \in \mathcal{D}_{m, \textup{good}}}
|\langle f, h_{I_1, \eta_1} \otimes u_{J_1, \kappa_1} \rangle|^2 \frac{\chi_{S(I_1)} \otimes \chi_{S(J_1)}}{\mu_n(S(I_1))\mu_m(S(J_1))}\Big)^{1/2} \\
&= \Big( \sum_{S\in\mathcal{D}'}\sum_{R\in\mathcal{C}_S}
|\langle f, h_R \rangle|^2 \frac{\chi_S}{\mu(S)}\Big)^{1/2},
\end{align*}
where we suppressed the $\eta_1, \kappa_1$ summation and denoted $h_R = h_{I_1, \eta_1} \otimes u_{J_1, \kappa_1}$.
\end{defn}
The following proposition is the main result of this section.
\begin{prop}\label{prop:dualprop}
If $b \in \BMO_{\textup{prod}}(\mu)$, then for all dyadic grids $\mathcal{D}_n$, $\mathcal{D}_n'$, $\mathcal{D}_m$ and $\mathcal{D}_m'$,
and for all functions $f$ of the form
\begin{displaymath}
f = \mathop{\sum_{\eta_1 \ne 0}}_{\kappa_1 \ne 0} \mathop{\sum_{I_1 \in \mathcal{D}_{n, \textup{good}}}}_{J_1 \in \mathcal{D}_{m, \textup{good}}} \lambda_{I_1J_1}^{\eta_1\kappa_1} h_{I_1, \eta_1} \otimes u_{J_1, \kappa_1}
= \sum_{S\in\mathcal{D}'}\sum_{R\in\mathcal{C}_S} \lambda_R h_R
\end{displaymath} 
there holds that
\begin{displaymath}
|\langle b, f\rangle| \lesssim \|b\|_{\textup{BMO}_{\textup{prod}}(\mu)} \|S_{\mathcal{D}_n\mathcal{D}_m}^{\mathcal{D}_n'\mathcal{D}_m'}f\|_{L^1(\mu)}.
\end{displaymath}
Here we use the same suppressed notation as in the above definition.
\end{prop}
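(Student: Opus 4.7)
The plan is a stopping-time argument in the spirit of Chang--Fefferman's proof of the classical product $H^1$-$\textup{BMO}$ duality, with the present $\textup{BMO}_{\textup{prod}}(\mu)$ condition playing the role of its homogeneous analogue. After writing $\langle b,f\rangle=\sum_{S}\sum_{R\in\mathcal{C}_S}\lambda_R\overline{\langle b,h_R\rangle}$ and applying Cauchy--Schwarz inside each $S\in\mathcal{D}'$, the task reduces to bounding $\sum_{S}a_S\beta_S$ where
\begin{equation*}
  a_S := \Big(\sum_{R\in\mathcal{C}_S}|\lambda_R|^2\Big)^{1/2},\qquad \beta_S := \Big(\sum_{R\in\mathcal{C}_S}|\langle b,h_R\rangle|^2\Big)^{1/2}.
\end{equation*}
I would introduce the square-function level sets $\Omega_k=\{S_{\mathcal{D}_n\mathcal{D}_m}^{\mathcal{D}_n'\mathcal{D}_m'}f>2^k\}$. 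Because $f$ is a finite Haar sum, $S_{\mathcal{D}_n\mathcal{D}_m}^{\mathcal{D}_n'\mathcal{D}_m'}f$ is constant on sufficiently fine rectangles of $\mathcal{D}'$, and hence each $\Omega_k$ is a union of $\mathcal{D}'$-rectangles, i.e., admissible in the sense of Definition~\ref{def:BMOprod}. To create the room needed for a maximal-function estimate, I then pass to the enlargement $\tilde\Omega_k=\{M_{\mathcal{D}'}\chi_{\Omega_k}>1/4\}$, where $M_{\mathcal{D}'}$ is the dyadic bi-parameter maximal operator associated with $\mathcal{D}'$ and $\mu$. Dominating $M_{\mathcal{D}'}$ by the iterated composition of one-parameter dyadic maximal operators in each factor and applying Doob in each factor shows that $M_{\mathcal{D}'}$ is bounded on $L^p(\mu)$ for every $p>1$, so $\mu(\tilde\Omega_k)\lesssim\mu(\Omega_k)$, and $\tilde\Omega_k$ is again admissible.

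Partition $\mathcal{D}'=\bigsqcup_k\mathcal{B}_k$ via the stopping time $k(S):=\max\{k:S\subset\tilde\Omega_k\}$, $\mathcal{B}_k:=\{S:k(S)=k\}$. For each level, Cauchy--Schwarz together with the $\textup{BMO}_{\textup{prod}}(\mu)$ condition on the admissible set $\tilde\Omega_k$ yields $(\sum_{S\in\mathcal{B}_k}\beta_S^2)^{1/2}\le\|b\|_{\textup{BMO}_{\textup{prod}}(\mu)}\mu(\tilde\Omega_k)^{1/2}$. The complementary estimate
\begin{equation*}
  \sum_{S\in\mathcal{B}_k}a_S^2\lesssim 2^{2k}\mu(\tilde\Omega_k)
\end{equation*}
is the engine of the argument. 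For $S\in\mathcal{B}_k$, the non-inclusion $S\not\subset\tilde\Omega_{k+1}$ means there is some $x_0\in S$ with $M_{\mathcal{D}'}\chi_{\Omega_{k+1}}(x_0)\le 1/4$; testing the maximal function with $S$ itself gives $\mu(S\cap\Omega_{k+1})\le\mu(S)/4$, hence $\mu(S\setminus\Omega_{k+1})\ge 3\mu(S)/4$. Since $a_S^2/\mu(S)$ is constant in $x$, this yields $a_S^2\le\tfrac{4}{3}\int_{S\setminus\Omega_{k+1}}(a_S^2/\mu(S))\,d\mu$, and swapping the sum in $S$ with the integral gives
\begin{equation*}
  \sum_{S\in\mathcal{B}_k}a_S^2\le\tfrac{4}{3}\int_{\tilde\Omega_k\setminus\Omega_{k+1}}\sum_{\substack{S\in\mathcal{B}_k\\ S\ni x}}\frac{a_S^2}{\mu(S)}\,d\mu(x)\le\tfrac{4}{3}\int_{\tilde\Omega_k\setminus\Omega_{k+1}}\big(S_{\mathcal{D}_n\mathcal{D}_m}^{\mathcal{D}_n'\mathcal{D}_m'}f\big)^2\,d\mu\le\tfrac{16}{3}\cdot 2^{2k}\mu(\tilde\Omega_k),
\end{equation*}
using $\sum_{S\ni x}a_S^2/\mu(S)=(S_{\mathcal{D}_n\mathcal{D}_m}^{\mathcal{D}_n'\mathcal{D}_m'}f(x))^2$ and $S_{\mathcal{D}_n\mathcal{D}_m}^{\mathcal{D}_n'\mathcal{D}_m'}f\le 2^{k+1}$ off $\Omega_{k+1}$.

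Assembling the two bounds yields $\sum_{S\in\mathcal{B}_k}a_S\beta_S\lesssim\|b\|_{\textup{BMO}_{\textup{prod}}(\mu)}\cdot 2^k\mu(\tilde\Omega_k)$, and summing over $k$ together with $\mu(\tilde\Omega_k)\lesssim\mu(\Omega_k)$ and the layer-cake identity $\sum_k 2^k\mu(\Omega_k)\approx\|S_{\mathcal{D}_n\mathcal{D}_m}^{\mathcal{D}_n'\mathcal{D}_m'}f\|_{L^1(\mu)}$ delivers the proposition. The main subtlety I expect is the non-homogeneous bi-parameter boundedness $M_{\mathcal{D}'}:L^p(\mu)\to L^p(\mu)$ for $p>1$: for general upper doubling factor measures $\mu_n$, $\mu_m$ this requires the tensor/iterated Doob argument sketched above, and it is precisely this step which converts the non-inclusion $S\not\subset\tilde\Omega_{k+1}$ into the crucial quantitative measure estimate $\mu(S\cap\Omega_{k+1})\le\mu(S)/4$.
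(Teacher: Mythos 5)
Your proof is correct and follows essentially the same stopping-time argument as the paper: level sets $\Omega_k$ of the square function, grouping the rectangles $S$ by the largest $k$ for which $S$ sits inside the maximal-function enlargement $\tilde\Omega_k$, the $\textup{BMO}_{\textup{prod}}(\mu)$ condition applied to the admissible set $\tilde\Omega_k$, and the complementary bound $\sum a_S^2\lesssim 2^{2k}\mu(\tilde\Omega_k)$ obtained by integrating $\phi^2$ over $\tilde\Omega_k\setminus\Omega_{k+1}$. The only cosmetic differences are that you apply Cauchy--Schwarz inside each $S$ before summing (the paper does it across the double sum at each level, which is equivalent) and that the paper's selection rule is phrased via the density condition $\mu(S\cap\Omega_k)>\tfrac12\mu(S)$ rather than containment in $\tilde\Omega_k$; both yield the same two key inequalities.
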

\begin{proof}
We begin by simply estimating
\begin{displaymath}
|\langle b, f\rangle| \le \sum_{S\in\mathcal{D}'}\sum_{R\in\mathcal{C}_S} |\langle b, h_R\rangle \lambda_R|.
\end{displaymath}
Let
\begin{equation}\label{eq:phi}
  \phi:= S_{\mathcal{D}_n\mathcal{D}_m}^{\mathcal{D}_n'\mathcal{D}_m'}f =  \Big(\sum_{S\in\mathcal{D}'}\sum_{R\in\mathcal{C}_S}\abs{\lambda_R}^2\frac{\chi_S}{\mu(S)}\Big)^{1/2},
\end{equation}
and set $\Omega_k:=\{\phi>2^k\}$, and $\mathcal{S}_k:=\{S\in\mathcal{D}':\mu(S\cap\Omega_k)>\tfrac12\mu(S)\}$. Note that if $S\notin\bigcup_{k\in\Z}\mathcal{S}_k$, then $\mu(S\cap\{\phi=0\})\geq\tfrac12\mu(S)$, and so
\begin{equation*}
  \sum_{R\in\mathcal{C}_S}\abs{\lambda_R}^2
  \leq 2\sum_{R\in\mathcal{C}_S}\abs{\lambda_R}^2\frac{\mu(S\cap\{\phi=0\})}{\mu(S)}
  \leq 2\int_{\{\phi=0\}}\phi^2 \,d\mu = 0.
\end{equation*}
Therefore, all the relevant $S$ in the sum of interest belong to at least one $\mathcal{S}_k$. Also, $\bigcap_{k\in\Z}\Omega_k=\{\phi=\infty\}$, which has measure zero if the right side of the claim is finite (which we may assume).
This means that if $S \in \bigcap_{k\in\Z}\mathcal{S}_k$, then $\mu(S) = 0$ and so $\lambda_R = 0$ for every $R \in \mathcal{C}_S$.

We infer that for all the relevant $S$ in the sum of interest there exist a unique $k_S \in \Z$ so that $S \in \mathcal{S}_{k_S}\setminus\mathcal{S}_{k_S+1}$.
Thus, we may reorganize
\begin{equation*}
  \sum_{S\in\mathcal{D}'}\sum_{R\in\mathcal{C}_S}\abs{\pair{b}{h_R}\lambda_R}
  =\sum_{k\in\Z}  \sum_{S\in\mathcal{S}_k\setminus\mathcal{S}_{k+1}}\sum_{R\in\mathcal{C}_S}\abs{\pair{b}{h_R}\lambda_R}.
\end{equation*}
We then bound
\begin{equation}\label{eq:CSinSk}
\begin{split}
  \sum_{S\in\mathcal{S}_k\setminus\mathcal{S}_{k+1}}\sum_{R\in\mathcal{C}_S}\abs{\pair{b}{h_R}\lambda_R}
  &\leq\Big(\sum_{S\in\mathcal{S}_k\setminus\mathcal{S}_{k+1}}\sum_{R\in\mathcal{C}_S}\abs{\pair{b}{h_R}}^2\Big)^{1/2} \\
  &\qquad\times\Big(\sum_{S\in\mathcal{S}_k\setminus\mathcal{S}_{k+1}}\sum_{R\in\mathcal{C}_S}\abs{\lambda_R}^2\Big)^{1/2}.
\end{split}
\end{equation}
Clearly $S\in\mathcal{S}_k$ means that $S\subset\tilde\Omega_k:=\{M\chi_{\Omega_k}>\tfrac12\}$, where $M$ is the strong dyadic maximal operator with respect to $\mathcal{D}'$. Thus
\begin{equation*}
\begin{split}
  \Big(\sum_{S\in\mathcal{S}_k\setminus\mathcal{S}_{k+1}}\sum_{R\in\mathcal{C}_S}\abs{\pair{b}{h_R}}^2\Big)^{1/2}
 & \leq\Big(\sum_{S\subset\tilde\Omega_k}\sum_{R\in\mathcal{C}_S}\abs{\pair{b}{h_R}}^2\Big)^{1/2} \\
  & \leq\Norm{b}{\BMO_{\textup{prod}}(\mu)}\mu(\tilde\Omega_k)^{1/2}\lesssim\Norm{b}{\BMO_{\textup{prod}}(\mu)}\mu(\Omega_k)^{1/2}.
\end{split}
\end{equation*}
For the other factor in \eqref{eq:CSinSk}, we use the condition that $S\notin\mathcal{S}_{k+1}$ to write
\begin{equation*}
   1\leq 2\mu(S\setminus\Omega_{k+1})/\mu(S), 
\end{equation*}
and again the condition that $S\in\mathcal{S}_k$ to see that $S\subset\tilde\Omega_k$,
so that
\begin{equation*}
\begin{split}
  \sum_{S\in\mathcal{S}_k\setminus\mathcal{S}_{k+1}}\sum_{R\in\mathcal{C}_S}\abs{\lambda_R}^2
  &\leq 2\int_{\tilde\Omega_k\setminus\Omega_{k+1}} 
      \sum_{S\in\mathcal{S}_k\setminus\mathcal{S}_{k+1}} \sum_{R\in\mathcal{C}_S}\abs{\lambda_R}^2\frac{\chi_S}{\mu(S)}\ud\mu \\
  &\leq 2\int_{\tilde\Omega_k\setminus\Omega_{k+1}}\phi^2\ud\mu
  \leq 2\cdot (2^{k+1})^2\mu(\tilde\Omega_k)\lesssim 2^{2k}\mu(\Omega_k).
\end{split}
\end{equation*}
Substituting back to \eqref{eq:CSinSk}, we obtain
\begin{equation*}
  \sum_{S\in\mathcal{S}_k\setminus\mathcal{S}_{k+1}}\sum_{R\in\mathcal{C}_S}\abs{\pair{b}{h_R}\lambda_R}
  \lesssim\Norm{b}{\BMO_{\textup{prod}}(\mu)}\mu(\Omega_k)^{1/2}\cdot 2^k\mu(\Omega_k)^{1/2},
\end{equation*}
and hence
\begin{equation*}
  \sum_{S\in\mathcal{D}'}\sum_{R\in\mathcal{C}_S}\abs{\pair{b}{h_R}\lambda_R}
  \lesssim\sum_{k\in\Z}\Norm{b}{\BMO_{\textup{prod}}(\mu)} 2^k\mu(\Omega_k)\lesssim\Norm{b}{\BMO_{\textup{prod}}(\mu)}\Norm{\phi}{L^1(\mu)}.\qedhere
\end{equation*}
\end{proof}

\subsection{Boundedness of the mixed paraproduct}
We are now ready to prove:

%It seems difficult to show the boundedness of $\Pi_{\textup{mixed},\,b}^{\eta_1\kappa_2}$ straight from our definition of product BMO.
%That is why we show a certain $H^1$--BMO type duality inequality in Section \ref{sec:dual}.
%For the definition of the square function $S_{\mathcal{D}_n\mathcal{D}_m}^{\mathcal{D}_n'\mathcal{D}_m'}$ see Definition \ref{defn:squaredef}.
%The next proposition together with Proposition \ref{prop:dualprop} shows that, indeed, $\|\Pi_{\textup{mixed},\,b}^{\eta_1\kappa_2}u\|_{L^2(\mu)} \lesssim \|b\|_{\textup{BMO}_{\textup{prod}}(\mu)} \|u\|_{L^2(\mu)}$.

\begin{prop}
%Assume that $b$ is such that $|\langle b, f\rangle| \lesssim L\|S_{\mathcal{D}_n\mathcal{D}_m}^{\mathcal{D}_n'\mathcal{D}_m'}f\|_{L^1(\mu)}$
%for all dyadic grids $\mathcal{D}_n$, $\mathcal{D}_n'$, $\mathcal{D}_m$ and $\mathcal{D}_m'$ and for all functions $f$ of the form
%\begin{displaymath}
%f = \mathop{\sum_{\eta_1 \ne 0}}_{\kappa_1 \ne 0} \mathop{\sum_{I_1 \in \mathcal{D}_{n, \textup{good}}}}_{J_1 \in \mathcal{D}_{m, \textup{good}}} \lambda_{I_1J_1}^{\eta_1\kappa_1} h_{I_1, \eta_1} \otimes u_{J_1, \kappa_1}.
%\end{displaymath} 
There holds that $\|\Pi_{\textup{mixed},\,b}^{\eta_1\kappa_2}u\|_{L^2(\mu)} \lesssim \|b\|_{\textup{BMO}_{\textup{prod}}(\mu)}\|u\|_{L^2(\mu)}$.
\end{prop}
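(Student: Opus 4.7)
I follow the two-step strategy already outlined: dualize and invoke Proposition~\ref{prop:dualprop}, then prove a square-function bound on the resulting bilinear form.

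First I would dualize against $v\in L^2(\mu)$ and rearrange the sums to isolate the $b$-dependent Haar coefficients. This identifies
\begin{equation*}
  \Lambda(u,v)=\sum_{\substack{I_1\in\mathcal{D}_{n,\textup{good}}\\ J_2\in\mathcal{D}'_{m,\textup{good}}}}\lambda_{I_1J_2}\,h_{I_1,\eta_1}\otimes u_{J_2,\kappa_2},\qquad \lambda_{I_1J_2}=\langle\varphi_{I_1}\rangle_{S(J_2)}\,\langle\psi_{J_2}\rangle_{S(I_1)},
\end{equation*}
where $\varphi_{I_1}=\langle u,h_{I_1,\eta_1}\rangle_1$ and $\psi_{J_2}=\langle v,u_{J_2,\kappa_2}\rangle_2$, the averages being taken with respect to $\mu_m$ on $S(J_2)\in\mathcal{D}_m$ and with respect to $\mu_n$ on $S(I_1)\in\mathcal{D}'_n$. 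Proposition~\ref{prop:dualprop} applies to such an expansion upon interchanging the roles of $\mathcal{D}_m$ and $\mathcal{D}'_m$ (which is legitimate, since $\textup{BMO}_{\textup{prod}}(\mu)$ is required to hold uniformly over all grid pairs), reducing the claim to
\begin{equation*}
  \|S\Lambda(u,v)\|_{L^1(\mu)}\lesssim\|u\|_{L^2(\mu)}\|v\|_{L^2(\mu)},
\end{equation*}
where $S$ is the associated square function of Definition~\ref{defn:squaredef}.

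The main obstacle is decoupling the double sum in $S\Lambda(u,v)^2$, since $\lambda_{I_1J_2}$ couples $I_1$ and $J_2$ through the averages $\langle\varphi_{I_1}\rangle_{S(J_2)}$ and $\langle\psi_{J_2}\rangle_{S(I_1)}$. I would break this coupling by the pointwise maximal-function estimates $|\langle\varphi_{I_1}\rangle_{S(J_2)}|^2\chi_{S(J_2)}(y_2)\leq(M_{\mathcal{D}_m}|\varphi_{I_1}|(y_2))^2\chi_{S(J_2)}(y_2)$, and analogously for $\psi_{J_2}$ with $M_{\mathcal{D}_n}$. After these substitutions each summand factors as a product of an $I_1$-dependent term and a $J_2$-dependent term, whence
\begin{equation*}
  S\Lambda(u,v)^2\leq U(x_1,y_2)^2\,V(x_1,y_2)^2,\quad U^2:=\sum_{I_1\in\mathcal{D}_{n,\textup{good}}}(M_{\mathcal{D}_m}|\varphi_{I_1}|(y_2))^2\frac{\chi_{S(I_1)}(x_1)}{\mu_n(S(I_1))},
\end{equation*}
with $V^2$ defined symmetrically in $\psi_{J_2}$.

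The remaining steps are routine: Cauchy--Schwarz yields $\|S\Lambda(u,v)\|_{L^1(\mu)}\leq\|U\|_{L^2(\mu)}\|V\|_{L^2(\mu)}$, and Fubini combined with the $L^2(\mu_m)$-boundedness of the dyadic maximal operator (Doob's inequality, valid for any locally finite Borel measure) and Parseval in the first variable gives
\begin{equation*}
  \|U\|_{L^2(\mu)}^2\lesssim\sum_{I_1\in\mathcal{D}_{n,\textup{good}}}\|\varphi_{I_1}\|_{L^2(\mu_m)}^2\leq\|u\|_{L^2(\mu)}^2,
\end{equation*}
and similarly $\|V\|_{L^2(\mu)}\lesssim\|v\|_{L^2(\mu)}$.
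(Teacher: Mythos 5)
Your argument is correct and follows essentially the same route as the paper: dualize, apply the $H^1$--BMO duality of Proposition~\ref{prop:dualprop} with the roles of $\mathcal{D}_m$ and $\mathcal{D}_m'$ interchanged, decouple the square function pointwise via dyadic maximal functions into a product $UV$, and finish with Cauchy--Schwarz, Fubini, Doob and orthonormality. The only (immaterial) slip is that the maximal operator acting on $\psi_{J_2}$ should be $M_{\mathcal{D}_n'}$ rather than $M_{\mathcal{D}_n}$, since $S(I_1)\in\mathcal{D}_n'$.
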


\begin{proof}
This proof is adapted from \cite{PV}.

Let us abbreviate $L:=\|b\|_{\textup{BMO}_{\textup{prod}}(\mu)}$. We will show that
\begin{equation*}
  |\langle \Pi_{\textup{mixed},\,b}^{\eta_1\kappa_2}u, v\rangle| \lesssim  L\|u\|_{L^2(\mu)} \|v\|_{L^2(\mu)}.
\end{equation*}
 We write
\begin{align*}
\langle \Pi_{\textup{mixed},\,b}^{\eta_1\kappa_2}u, v\rangle = \Big\langle b, \sum_{I_2 \in \mathcal{D}_n'}  \mathop{\mathop{\sum_{I_1 \in \mathcal{D}_{n, \textup{good}}}}_{I_1 \subset I_2}}_{\ell(I_1) = 2^{-r}\ell(I_2)}
\sum_{J_1 \in \mathcal{D}_m}& \mathop{\mathop{\sum_{J_2 \in \mathcal{D}_{m, \textup{good}}'}}_{J_2 \subset J_1}}_{\ell(J_2) = 2^{-r}\ell(J_1)}
\big\langle u, h_{I_1, \eta_1} \otimes \frac{\chi_{J_1}}{\mu_m(J_1)}\big\rangle \\
&\times  \big\langle v,  \frac{\chi_{I_2}}{\mu_n(I_2)} \otimes u_{J_2, \kappa_2} \big\rangle h_{I_1, \eta_1} \otimes u_{J_2, \kappa_2} \Big\rangle.
\end{align*}
By  Proposition~\ref{prop:dualprop}, we have
\begin{displaymath}
|\langle \Pi_{\textup{mixed},\,b}^{\eta_1\kappa_2}u, v\rangle| \lesssim L\|S_{\mathcal{D}_n\mathcal{D}_m'}^{\mathcal{D}_n'\mathcal{D}_m}f\|_{L^1(\mu)},
\end{displaymath}
where
\begin{displaymath}
f = \sum_{I_1 \in \mathcal{D}_{n, \textup{good}}} \sum_{J_2 \in \mathcal{D}_{m, \textup{good}}'}  \big\langle u, h_{I_1, \eta_1} \otimes \frac{\chi_{S(J_2)}}{\mu_m(S(J_2))}\big\rangle
\big\langle v,  \frac{\chi_{S(I_1)}}{\mu_n(S(I_1))} \otimes u_{J_2, \kappa_2} \big\rangle h_{I_1, \eta_1} \otimes u_{J_2, \kappa_2}.
\end{displaymath}

We estimate $S_{\mathcal{D}_n\mathcal{D}_m'}^{\mathcal{D}_n'\mathcal{D}_m}f$ pointwise. Let us first note that
\begin{align*}
(S_{\mathcal{D}_n\mathcal{D}_m'}^{\mathcal{D}_n'\mathcal{D}_m}f)^2
\le  \sum_{I_1 \in \mathcal{D}_{n, \textup{good}}} \sum_{J_2 \in \mathcal{D}_{m, \textup{good}}'} \big| \big\langle &u, h_{I_1, \eta_1} \otimes \frac{\chi_{S(J_2)}}{\mu_m(S(J_2))}\big\rangle \big|^2 \\
& \times \big| \big\langle v,  \frac{\chi_{S(I_1)}}{\mu_n(S(I_1))} \otimes u_{J_2, \kappa_2} \big\rangle \big|^2 \frac{\chi_{S(I_1)} \otimes \chi_{S(J_2)}}{\mu_n(S(I_1))\mu_m(S(J_2))}.
\end{align*}
This implies that
\begin{align*}
S_{\mathcal{D}_n\mathcal{D}_m'}^{\mathcal{D}_n'\mathcal{D}_m}f \le \Big(& \sum_{I_1 \in \mathcal{D}_{n, \textup{good}}} \frac{\chi_{S(I_1)}}{\mu_n(S(I_1))} \otimes
\sup_{J_2 \in \mathcal{D}_{m, \textup{good}}'}\big[ \big|\big\langle u, h_{I_1, \eta_1} \otimes \frac{\chi_{S(J_2)}}{\mu_m(S(J_2))}\big\rangle \big|^2  \chi_{S(J_2)}\big]\Big)^{1/2} \\
&\times \Big( \sum_{J_2 \in \mathcal{D}_{m, \textup{good}}'} \sup_{I_1 \in \mathcal{D}_{n, \textup{good}}}\big[ \big| \big\langle v,  \frac{\chi_{S(I_1)}}{\mu_n(S(I_1))} \otimes u_{J_2, \kappa_2} \big\rangle \big|^2
\chi_{S(I_1)} \big] \otimes \frac{\chi_{S(J_2)}}{\mu_m(S(J_2))} \Big)^{1/2}.
\end{align*}
This yields the final pointwise bound
\begin{align*}
S_{\mathcal{D}_n\mathcal{D}_m'}^{\mathcal{D}_n'\mathcal{D}_m}f \le \Big(& \sum_{I_1 \in \mathcal{D}_{n, \textup{good}}} \frac{\chi_{S(I_1)}}{\mu_n(S(I_1))} \otimes M_{\mathcal{D}_m}(\langle u, h_{I_1, \eta_1}\rangle_1)^2 \Big)^{1/2} \\
&\times \Big( \sum_{J_2 \in \mathcal{D}_{m, \textup{good}}'} M_{\mathcal{D}_n'}(\langle v, u_{J_2, \kappa_2}\rangle_2)^2  \otimes \frac{\chi_{S(J_2)}}{\mu_m(S(J_2))} \Big)^{1/2}.
\end{align*}

Using the above pointwise bound together with Cauchy--Schwarz we get that
\begin{align*}
\|S_{\mathcal{D}_n\mathcal{D}_m'}^{\mathcal{D}_n'\mathcal{D}_m}f\|_{L^1(\mu)} \le \Big\|  \sum_{I_1 \in \mathcal{D}_{n, \textup{good}}}& \frac{\chi_{S(I_1)}}{\mu_n(S(I_1))} \otimes M_{\mathcal{D}_m}(\langle u, h_{I_1, \eta_1}\rangle_1)^2 \Big\|_{L^1(\mu)}^{1/2} \\
&\times \Big\| \sum_{J_2 \in \mathcal{D}_{m, \textup{good}}'} M_{\mathcal{D}_n'}(\langle v, u_{J_2, \kappa_2}\rangle_2)^2  \otimes \frac{\chi_{S(J_2)}}{\mu_m(S(J_2))} \Big\|_{L^1(\mu)}^{1/2}.
\end{align*}
The proof is ended by noting that
\begin{align*}
&\Big\|  \sum_{I_1 \in \mathcal{D}_{n, \textup{good}}} \frac{\chi_{S(I_1)}}{\mu_n(S(I_1))} \otimes M_{\mathcal{D}_m}(\langle u, h_{I_1, \eta_1}\rangle_1)^2 \Big\|_{L^1(\mu)}\\
&= \sum_{I_1 \in \mathcal{D}_{n, \textup{good}}} \int_{\R^m} M_{\mathcal{D}_m}(\langle u, h_{I_1, \eta_1}\rangle_1)^2\,d\mu_m \\
&\lesssim \sum_{I_1 \in \mathcal{D}_n} \int_{\R^m} |\langle u, h_{I_1, \eta_1}\rangle_1|^2\,d\mu_m \lesssim \|u\|_{L^2(\mu)}^2.
\end{align*}
\end{proof}

\section{Necessity of $T1\in\textup{BMO}_{\textup{prod}}(\mu)$}\label{sec:bmo}

\subsection{Dyadic Journ\'e's lemma for general product measures}\label{sec:journe}
We prove Journ\'e's covering lemma \cite{Jo1} with general measures.
In this section, we assume that $\mu=\mu_n\times\mu_m$ is an arbitrary product of a Borel measure $\mu_n$ in $\R^n$ and $\mu_m$ in $\R^m$. That is, we don't assume any growth conditions.
Again, we have the dyadic systems $\mathcal{D}_n$ and $\mathcal{D}_m$ in $\R^n$ and $\R^m$ respectively.

For $R=I\times J\in\mathcal{D}:=\mathcal{D}_n \times\mathcal{D}_m$, write $R^{(i,j)}:=I^{(i)}\times J^{(j)}$ and
\begin{equation*}
  \operatorname{gen}_1(R):=\operatorname{gen}(I), \qquad
  \operatorname{gen}(R):=(\operatorname{gen}(I),\operatorname{gen}(J)), 
\end{equation*}
where $I^{(i)}$ is the usual dyadic ancestor, and $\operatorname{gen}(I)$ is the usual generation of a dyadic cube.

Let $\Omega \subset \R^{n+m}$ be such a set that $\mu(\Omega) < \infty$ and that for every $x \in \Omega$ there
exists $R \in \mathcal{D}$ so that $x \in R \subset \Omega$. We let
\begin{equation*}
  \tilde\Omega:=\{M\chi_\Omega>\tfrac12\},
\end{equation*}
where $M$ is the strong dyadic maximal operator with respect to $\mathcal{D}$.
%\begin{equation*}
%  Mf(x):=\sup_{R\ni x}\fint_R\abs{f}\ud\mu,\qquad \mu(R) = \mu(I\times J) =\mu_n(I)\mu_m(J).
%\end{equation*}
Define the embeddedness of $R$ in $\Omega$ as
\begin{equation*}
  \operatorname{emb}_1(R;\Omega):=\sup\{k:R^{(k,0)}\subset\tilde\Omega\}.
\end{equation*}
This notation and some related inspiration is derived from \cite{CLMP}.

We say that a dyadic rectangle $R=I\times J\subset\Omega$ is 2-maximal if $I\times\tilde{J}\not\subset\Omega$ for any dyadic $\tilde{J}\supsetneq J$. (Obviously we could define $\operatorname{emb}_2(R;\Omega)$ and $1$-maximality analogously. The main point is that in the following result we need to consider embeddedness and maximality with respect to different variables.)

\begin{thm}[Journ\'e's lemma]\label{thm:Journe}
Let $\omega:\N\to\R_+$ be a decreasing function with the property that $\sum_{k=0}^{\infty} \omega(k) < \infty$.
Then there holds that
\begin{equation*}
  \sum_{\substack{R\subset\Omega\\ \textup{2-maximal} }}\omega(\operatorname{emb}_1(R;\Omega))\times\mu(R)
  \leq 2\sum_{k=0}^\infty\omega(k)\times\mu(\Omega).
\end{equation*}
\end{thm}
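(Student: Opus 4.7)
The plan is to reduce the weighted sum to a uniform level-set estimate via a trivial rearrangement. Setting
\begin{equation*}
F_k:=\sum_{\substack{R\subset\Omega\\\text{2-maximal}\\\operatorname{emb}_1(R;\Omega)=k}}\mu(R),
\end{equation*}
the left-hand side of the claimed inequality equals $\sum_{k\ge 0}\omega(k)F_k$. Thus it suffices to prove the key uniform bound
\begin{equation*}
F_k\le 2\mu(\Omega)\qquad\text{for every }k\ge 0,
\end{equation*}
after which summing against the weights $\omega(k)$ gives exactly $\sum_k\omega(k)F_k\le 2\mu(\Omega)\sum_k\omega(k)$, as required.

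The heart of the argument is the level-set bound. For a 2-maximal $R=I\times J$ with $\operatorname{emb}_1(R;\Omega)=k$, I would pass to the ``lifted'' rectangle $\bar R:=R^{(k+1,0)}=I^{(k+1)}\times J$. By the very definition of embeddedness, $\bar R\not\subset\tilde\Omega$, so there is a point $x\in\bar R$ with $M\chi_\Omega(x)\le \tfrac12$; evaluating the strong dyadic maximal function at the dyadic rectangle $\bar R$ itself then yields the density estimate
\begin{equation*}
\mu(\bar R\cap\Omega)\le \tfrac12\mu(\bar R).
\end{equation*}
I would then group the 2-maximal $R$'s (of embeddedness exactly $k$) by their common image $\bar R$: those with $R\mapsto\bar R=\bar I\times J$ have first coordinate $I$ equal to a dyadic $(k+1)$-descendant of $\bar I$; these descendants form a disjoint family contained in $\bar I\cap A_J$, where $A_J:=\{y_1:\{y_1\}\times J\subset\Omega\}$. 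Summing gives
\begin{equation*}
\sum_{R\mapsto\bar R}\mu(R)\le \mu_n(\bar I\cap A_J)\mu_m(J)=\mu((\bar I\cap A_J)\times J)\le \mu(\bar R\cap\Omega),
\end{equation*}
where the last inequality uses that $(\bar I\cap A_J)\times J\subset\bar R\cap\Omega$ by the definition of $A_J$.

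The final and hardest step is to sum over $\bar R$ and conclude $\sum_{\bar R}\mu(\bar R\cap\Omega)\le 2\mu(\Omega)$, which is equivalent to the pointwise-on-average statement that each $y\in\Omega$ lies in at most $2$ of the relevant $\bar R$'s, i.e., $\int_\Omega N_y(k)\,d\mu(y)\le 2\mu(\Omega)$, where $N_y(k)$ is the number of 2-max $R\ni y$ with $\operatorname{emb}_1(R;\Omega)=k$. This is the main obstacle, and is also where the non-homogeneity of $\mu$ is felt: the classical doubling Journ\'e lemma handles the analogous overlap bound via a Vitali-type comparison of the measures of nested ancestor cubes, which is unavailable here. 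Instead, I would exploit the combinatorial ``run structure'' of the 2-max rectangles through a fixed point $y$: by nestedness they form a chain $R_0\subsetneq R_1\subsetneq\cdots$ with non-increasing $J_i$'s, and within each constant-$J$ run the embeddedness $\operatorname{emb}_1$ decreases by exactly $1$ at each step, so each run contributes at most one $R$ with $\operatorname{emb}_1=k$. A careful analysis of how $\operatorname{emb}_1$ evolves across a strict decrease $J_i\supsetneq J_{i+1}$ (together with the first-coordinate maximality of $\bar I\times J$ in $\tilde\Omega$ forced by $\bar R\not\subset\tilde\Omega$) should yield the desired overlap bound.
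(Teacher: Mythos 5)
The reduction you propose hinges entirely on the uniform level-set bound $F_k\le 2\mu(\Omega)$, and that is exactly the step you do not prove: your final paragraph defers the overlap estimate $\sum_{\bar R}\mu(\bar R\cap\Omega)\le 2\mu(\Omega)$ to ``a careful analysis [that] should yield the desired overlap bound.'' This is not a technical loose end but the entire content of the theorem. The lifted rectangles $\bar R=I^{(k+1)}\times J$ coming from distinct $2$-maximal $R$'s of embeddedness $k$ can overlap heavily: through a fixed point the $2$-maximal rectangles form a staircase $I_1\supsetneq I_2\supsetneq\cdots$, $J_1\subsetneq J_2\subsetneq\cdots$, and your constant-$J$ run argument only handles the (easy) nested case $I\subsetneq I'$, $J=J'$; nothing you write controls how many staircase steps can share the same exact embeddedness $k$, and for general (non-doubling) product measures there is no a priori bound on $\sum_j\mu(I_j\times J_j)$ in terms of $\mu(\bigcup_j I_j\times J_j)$. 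In short, you have reduced the theorem to a Carleson-type packing condition for the family $\{\bar R\}$ that is at least as hard as the original statement, and quite possibly false at a fixed level $k$ without further decomposition.

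The paper avoids this entirely by never claiming a uniform bound on the exact level sets. It writes $\omega(j)=\sum_{k\ge j}\delta(k)$ with $\delta(k)=\omega(k)-\omega(k+1)\ge0$ (summation by parts), which converts the weighted sum into $\sum_k\delta(k)\sum_{\operatorname{emb}_1(R;\Omega)\le k}\mu(R)$, and then splits the rectangles with $\operatorname{emb}_1\le k$ into $k+1$ classes $\mathcal{R}(k,i)$ according to $\operatorname{gen}_1(R)\bmod (k+1)$. Within one class, two nested first components satisfy $I\subsetneq I'\Rightarrow I^{(k+1)}\subset I'$, which is what makes the disjoint exceptional sets $E(R)=I\times(J\setminus\bigcup J')$ carry at least half the mass of $R$ (otherwise $R^{(k+1,0)}\subset\tilde\Omega$, contradicting $\operatorname{emb}_1\le k$). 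This yields $\sum_{R\in\mathcal{R}(k,i)}\mu(R)\le2\mu(\Omega)$ per class, hence $\sum_{\operatorname{emb}_1\le k}\mu(R)\le2(k+1)\mu(\Omega)$, and the factor $k+1$ is exactly absorbed by $\sum_k\delta(k)(k+1)=\sum_k\omega(k)$. Your preliminary steps (the density bound $\mu(\bar R\cap\Omega)\le\tfrac12\mu(\bar R)$ from $\bar R\not\subset\tilde\Omega$, and the grouping of $R$'s with a common $\bar R$) are correct, but without either the residue-class trick or a genuine proof of the overlap bound, the argument does not close.
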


\begin{proof}
Let us define $\delta(k):=\omega(k)-\omega(k+1)\geq 0$, so that $\omega(j)=\sum_{k=j}^\infty\delta(k)$.
We have
\begin{equation}\label{eq:JourneStart}
\begin{split}
   \sum_{\substack{R\subset\Omega\\ \textup{2-maximal}}} &\omega(\operatorname{emb}_1(R;\Omega))\mu(R)
    =\sum_{j=0}^\infty \omega(j)\sum_{\substack{R\subset\Omega\textup{ 2-maximal}\\ \operatorname{emb}_1(R;\Omega)=j}}\mu(R) \\
    &=\sum_{k=0}^\infty \delta(k)\sum_{\substack{R\subset\Omega\textup{ 2-maximal}\\ \operatorname{emb}_1(R;\Omega)\leq k}}\mu(R) 
%    &=\sum_{k=0}^\infty \delta(k) \sum_{i=0}^{k}
%      \sum_{\substack{R\subset\Omega\ \textup{2-maximal} \\ \operatorname{emb}_1(R;\Omega)\leq k \\ \operatorname{gen}_1(R)\equiv i\mod k+1}}\mu(R) \\
     =\sum_{k=0}^\infty \delta(k) \sum_{i=0}^{k}
      \sum_{R\in\mathcal{R}(k,i)}\mu(R),
\end{split}
\end{equation}
where
\begin{equation*}
  \mathcal{R}(k,i):=\Big\{R\subset\Omega\quad\textup{2-maximal};\
    \operatorname{emb}_1(R;\Omega)\leq k,\
    \operatorname{gen}_1(R)\equiv i\mod k+1\Big\}.
\end{equation*}

We make the following observation about two intersecting $R=I\times J$ and $R'=I'\times J'$ in $\mathcal{R}(k,i)$. By definition, we have $I\cap I'\neq\varnothing$ and $J\cap J'\neq\varnothing$. Suppose first that $I=I'$. But then also $J=J'$, since otherwise one of $R$ and $R'$ could not be 2-maximal. Thus $I\neq I'$ whenever $R\neq R'$. Let us then consider the case that $I\subsetneq I'$. Then we must have $J\supseteq J'$, since otherwise $R=I\times J\subsetneq I\times J'\subset R'\subset\Omega$ would contradict the 2-maximality of $R$.

For every $R=I\times J\in\mathcal{R}(k,i)$, consider the subset
\begin{equation*}
   E(R):=R\setminus\bigcup_{\substack{R'=I'\times J'\in\mathcal{R}(k,i)\\ I'\supsetneq I}}R'
     =I\times\Big(J\setminus \bigcup_{\substack{I'\times J'\in\mathcal{R}(k,i)\\ I'\supsetneq I}}J'\Big),
\end{equation*}
where we observe that $J'\subset J$ for all relevant $J'$ in the union (i.e., those for which $R'$ intersects $R$), by what we just checked above.
The sets $E(R)$, $R\in\mathcal{R}(k,i)$, are pairwise disjoint; namely, if two different $R=I\times J,R'=I'\times J'\in\mathcal{R}(k,i)$ intersect, the previous paragraph implies that either $I'\supsetneq I$, in which case $R'\cap E(R)=\varnothing$, or else $I\supsetneq I'$, in which case $R\cap E(R')=\varnothing$.

We claim that
\begin{equation}\label{eq:JourneClaim}
  \mu(E(R))=\mu_n(I)\mu_m\Big( J\setminus \bigcup_{\substack{I'\times J'\in\mathcal{R}(k,i)\\ I'\supsetneq I}}J'\Big)
  \geq \frac12\mu(R)=\frac12\mu_n(I)\mu_m(J).
\end{equation}
This is trivial if $\mu_n(I)=0$. Otherwise, suppose that the opposite estimate is valid.
Observe that $I\subsetneq I'$ and $\operatorname{gen}(I)\equiv i\equiv\operatorname{gen}(I')\mod k+1$ imply that $I^{(k+1)}\subset I'$. Thus
\begin{equation*}
  R^{(k+1,0)}\setminus\Omega
  \subset R^{(k+1,0)}\setminus\bigcup_{\substack{R'=I'\times J'\in\mathcal{R}(k,i)\\ I'\supsetneq I}}R'
  =I^{(k+1)}\times\Big(J\setminus \bigcup_{\substack{I'\times J'\in\mathcal{R}(k,i)\\ I'\supsetneq I}}J'\Big).
\end{equation*}
Hence
\begin{equation*}
\begin{split}
  \mu(R^{(k+1,0)}\setminus\Omega)
  &\leq\mu_n(I^{(k+1)})\mu_m\Big(J\setminus \bigcup_{\substack{I'\times J'\in\mathcal{R}(k,i)\\ I'\supsetneq I}}J'\Big) \\
  &<\mu_n(I^{(k+1)})\cdot\frac12\mu_m(J)=\frac12\mu(R^{(k+1,0)})
\end{split}
\end{equation*}
and therefore
\begin{equation*}
  \inf_{R^{(k+1,0)}}M\chi_\Omega
  \geq\frac{\mu(R^{(k+1,0)}\cap\Omega)}{\mu(R^{(k+1,0)})}
  >1-\frac12=\frac12.
\end{equation*}
But this means that $R^{(k+1,0)}\subset\tilde\Omega$, contradicting $\operatorname{emb}_1(R;\Omega)\leq k$.

Hence the claim \eqref{eq:JourneClaim} is valid. But this means that
\begin{equation*}
  \sum_{R\in\mathcal{R}(k,i)}\mu(R)
  \leq \sum_{R\in\mathcal{R}(k,i)}2\mu (E(R))\leq 2\mu(\Omega),
\end{equation*}
where the last step is due to the fact that the sets $E(R)$ are pairwise disjoint and all contained in $\Omega$. Substituting back to \eqref{eq:JourneStart} shows that
\begin{equation*}
     \sum_{\substack{R\subset\Omega\\ \textup{2-maximal}}} \omega(\operatorname{emb}_1(R;\Omega))\mu(R)
     \leq\sum_{k=0}^\infty\delta(k)\times(k+1)\times 2\mu(\Omega)
     =2\sum_{k=0}^\infty\omega(k)\times\mu(\Omega).\qedhere
\end{equation*}
\end{proof}

\subsection{%Necessity of the condition $T1 \in \textup{BMO}_{\textup{prod}}(\mu)$ for a
A class of bi-parameter singular integrals}%\label{sec:bmo}
In this section we return to the setting of upper doubling measures. Let $\mu = \mu_n \times \mu_m$, where $\mu_n$ and $\mu_m$ are upper doubling measures on $\R^n$ and $\R^m$ respectively. The corresponding dominating functions
are denoted by $\lambda_n$ and $\lambda_m$. We consider what seems like a slightly restricted class of product Calder\'on--Zygmund operators than above. In fact, the definition will be close to the classical setting of Journ\'e, just with general measures.
We want to demonstrate the necessity of the product BMO condition for this subclass.

We consider product Calder\'on--Zygmund operators $T\in\bddlin(L^2(\mu))$ with the following vector-valued Calder\'on--Zygmund structure. When viewing $T$ as an operator on $L^2(\mu_n;L^2(\mu_m))$, it has a kernel $T_1(x_1,y_1)$ with values in $L^2(\mu_m)$-bounded Calder\'on--Zygmund operators, such that
\begin{equation*}
  \pair{\Phi}{T\Psi}
  =\iint\pair{\Phi(x_1)}{T_1(x_1,y_1)\Psi(y_1)}_2\ud\mu_n(x_1)\ud\mu_n(y_1)
\end{equation*}
whenever the $L^2(\mu_m)$-valued functions $\Phi$ and $\Psi$ are disjointly supported. This kernel is required to satisfy the standard estimates
\begin{equation*}
  \Norm{T_1(x_1,y_1)}{CZ(\mu_m)}\lesssim \frac{1}{\lambda_n(x_1,|x_1-y_1|)}
\end{equation*}
and
\begin{equation*}
  \Norm{T_1(x_1,y_1)-T_1(x_1',y_1)}{CZ(\mu_m)}\lesssim\Big(\frac{|x_1-x_1'|}{|x_1-y_1|}\Big)^\alpha \frac{1}{\lambda_n(x_1,|x_1-y_1|)}
\end{equation*}
for $|x_1-y_1|>2|x_1-x_1'|$. (We do not require regularity in the second variable for the present considerations.) Here $\Norm{\ }{CZ(\mu_m)}$ designates the sum of the $\bddlin(L^2(\mu_m))$-norm of the operator,
and the Calder\'on--Zygmund constants of its (scalar-valued) kernel.
We impose the analogous conditions when viewing $T$ as an operator on the space $L^2(\mu_m;L^2(\mu_n))$.
\begin{rem}
Let $\Phi = \Phi_1 \otimes \Phi_2$ and $\Psi = \Psi_1 \otimes \Psi_2$ with $\textup{spt}\,\Phi_1 \cap \textup{spt}\,\Psi_1 = \emptyset$. Then with our original notation we have
$K_{\Phi_2, \Psi_2}(x_1, y_1) = \langle \Phi_2, T_1(x_1,y_1)\Psi_2\rangle_2$, and we have the Calder\'on--Zygmund bounds
with the constant $C(\Phi_2, \Psi_2) \lesssim \|\Phi_2\|_{L^2(\mu_m)} \|\Psi_2\|_{L^2(\mu_m)}$. If also $\textup{spt}\,\Phi_2 \cap \textup{spt}\, \Psi_2 = \emptyset$, then one has
a representation with the full kernel $K(x, y) = K_{T_1(x_1,y_1)}(x_2,y_2)$. We will show that for this subclass of operators
our new non-homogeneous product BMO condition is also necessary. Although, let us note again that for example the regularity in the second variable is not needed in this section.
We use this subclass, since it seems that it allows more control in situations where not all of the functions are of tensor product type, and this type of control is needed here for the first time.
\end{rem}

\begin{thm}\label{thm:BMO}
Let $T$ be an $L^2(\mu)$ bounded product Calder\'on--Zygmund operator in the sense above, and $b\in L^\infty(\mu)$. Then
\begin{equation*}
  \|Tb\|_{\textup{BMO}_{\textup{prod}}(\mu)}\lesssim\|b\|_{L^{\infty}(\mu)}.
\end{equation*}
\end{thm}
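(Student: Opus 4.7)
By Remark~\ref{rem:BMOprod} we may assume $\Omega$ is bounded. The strategy is a non-homogeneous adaptation of the classical Chang--Fefferman argument: decompose $b$ according to one-parameter dyadic maximal enlargements of $\Omega$, use the given $L^2$-boundedness of $T$ on the local pieces, and kernel H\"older estimates together with the non-homogeneous Journ\'e covering lemma of Theorem~\ref{thm:Journe} on the far pieces.

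Define
\begin{equation*}
  \Omega^1:=\{x\in\R^{n+m}:M^{(1)}_{\mathcal{D}_n'}\chi_\Omega(x)>\tfrac12\},\qquad \Omega^2:=\{x\in\R^{n+m}:M^{(2)}_{\mathcal{D}_m'}\chi_\Omega(x)>\tfrac12\},
\end{equation*}
where $M^{(i)}$ is the dyadic maximal operator acting only in the $i$-th variable. Fubini and the $L^2$-boundedness of the one-dimensional dyadic maximal (valid without any doubling assumption) give $\mu(\Omega^1\cup\Omega^2)\lesssim\mu(\Omega)$. Decompose $b=b^{NN}+b^{NF}+b^{FN}+b^{FF}$, where each superscript letter ($N$ for near, $F$ for far) records whether we are inside $\Omega^i$ or in its complement, for $i=1,2$ respectively. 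For the doubly near piece, orthonormality of the Haar system and the given $L^2$-boundedness of $T$ yield immediately
\begin{equation*}
  \sum_R\abs{\pair{Tb^{NN}}{h_R}}^2\leq\|T\|^2\|b^{NN}\|_{L^2(\mu)}^2\lesssim\|b\|_{L^\infty(\mu)}^2\mu(\Omega).
\end{equation*}

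For the mixed pieces, by symmetry focus on $b^{FN}$. Fix $R=I\times J\in\mathcal{C}_S$ with $S=K\times L\subset\Omega$. A short verification from the definitions shows that the support of $b^{FN}$ is disjoint from $K\times L$. Combining the cancellation of $h_{I,\eta}$, the goodness estimate $d(I,K^c)\geq 4\ell(I)^{\gamma_n}\ell(K)^{1-\gamma_n}$, the $x_1$-H\"older regularity of the vector-valued kernel $T_1(x_1,y_1)$, and the $L^2(\mu_m)$-bound implicit in $\Norm{T_1(x_1,y_1)}{CZ(\mu_m)}$ to handle the second variable, a standard non-homogeneous calculation produces a first-variable decay factor of the form $(\ell(I)/\ell(K))^{\alpha/2}$ with further gains depending on how deeply $K$ is embedded in $\Omega$. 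Squaring, summing in $R\in\mathcal{C}_S$, and organizing the $S\subset\Omega$ by a one-variable Carleson packing---a degenerate case of Theorem~\ref{thm:Journe} applied with the gained embeddedness weight---yields the bound $\|b\|_{L^\infty(\mu)}^2\mu(\Omega)$; the case $b^{NF}$ is symmetric.

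Finally, the doubly far piece $b^{FF}$ is handled by using both cancellations simultaneously, invoking the full four-variable H\"older regularity of the kernel $K(x,y)$---available by combining the two vector-valued structures, as noted in the remark preceding the theorem---to obtain a decay $(\ell(I)/\ell(K))^{\alpha/2}(\ell(J)/\ell(L))^{\beta/2}$ together with extra gains when the support of $b^{FF}$ is deeply separated from $S$ in either direction. The outer summation over $S\subset\Omega$ is genuinely bi-parameter, and here we invoke Theorem~\ref{thm:Journe} in full force: applied to the $2$-maximal rectangles in $\Omega$ with the summable weight $\omega(k):=2^{-k\alpha/2}$ (encoding the gained $1$-embeddedness decay), followed by an analogous iteration in the second variable, it delivers the required $\lesssim\mu(\Omega)$ control. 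The main obstacle of the proof lies precisely here: without the general non-homogeneous Journ\'e lemma of Section~\ref{sec:journe}, this bi-parameter summation could not be controlled in the non-doubling setting. Combining the four contributions completes the proof.
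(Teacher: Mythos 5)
Your overall architecture (local piece via $L^2$-boundedness, far pieces via kernel regularity plus the non-homogeneous Journ\'e lemma) matches the paper's in spirit, but the decomposition you build it on does not deliver the separation of supports that the kernel representations require, and this is precisely the crux of the proof. The sets $\Omega^1,\Omega^2$ are defined through \emph{one-variable} maximal functions, so $(\Omega^1)^c$ is not of the form $E^c\times\R^m$: a point $(y_1,y_2)$ with $y_1\in I\subset K$ can perfectly well lie in $(\Omega^1)^c$ (it suffices that the slice $\{x_1:(x_1,y_2)\in\Omega\}$ be small near $y_1$), and likewise in $(\Omega^2)^c$. Consequently both $b^{FN}$ and $b^{FF}$ may be nonzero at points whose first coordinate lies in $I$ and/or whose second coordinate lies in $J$. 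Your observation that $\supp b^{FN}$ is disjoint from $K\times L$ is correct but useless here: disjointness from a product set only places the support in $(K^c\times\R^m)\cup(K\times L^c)$, which gives no separation in any \emph{fixed} variable. Without such separation you cannot invoke the partial kernel representation with $T_1(x_1,y_1)$ (it requires the $L^2(\mu_m)$-valued functions to have disjoint $x_1$-supports), nor the full kernel representation for $b^{FF}$, so the ``standard non-homogeneous calculation'' you appeal to never gets off the ground.

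The paper resolves exactly this difficulty by (i) enlarging $\Omega$ with the \emph{strong} dyadic maximal function, $\tilde\Omega=\{M_{\mathcal{D}'}\chi_\Omega>\frac12\}$, removing $b\chi_{\tilde\Omega}$ by $L^2$-boundedness, and then (ii) performing, for each fixed $L\in\mathcal{D}_m'$, a further splitting $b=b^1_L+b^2_L$ with $b^1_L=b\chi_{F_L}(x_1)$, where $F_L$ is the union of the maximal $F$ with $F\times L\subset\tilde\Omega$. Because $\supp b\subset\tilde\Omega^c$, this produces genuine variable-by-variable separation: $\supp b^1_L\subset F_L\times L^c$ (separation from $J\subset L$ in the second variable, handled with $T_2(x_2,y_2)$ and closed by a geometric series plus $\mu(\tilde\Omega)\lesssim\mu(\Omega)$ --- no Journ\'e lemma there), while $\supp b^2_L\subset F_L^c\times\R^m\subset\tilde K_G^c\times\R^m$ (separation from $I\subset K$ in the first variable, handled with $T_1(x_1,y_1)$, Lemma~\ref{lem:BMO1}, and then Theorem~\ref{thm:Journe} applied once, with the weight $2^{-\alpha(1-\gamma_n)\operatorname{emb}_1(K\times G;\Omega)}$, not iterated in the second variable). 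To repair your argument you would have to replace the static four-fold splitting by an $L$- (or $K$-) dependent one of this kind; as written, the mixed and doubly-far cases contain an unbridged gap.
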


\subsection{Initial considerations}
 We may assume by homogeneity that $\|b\|_{L^{\infty}(\mu)}=1$. By Definition~\ref{def:BMOprod} and Remark~\ref{rem:BMOprod}, we need to check the estimate
\begin{equation*}
  \sum_{\substack{S\in\mathcal{D}'\\S\subset\Omega}}
  \sum_{\substack{R\in\mathcal{D}\textup{ good},\ R\subset S\\ \operatorname{gen}(R)=\operatorname{gen}(S)+(r,r)}}
    \abs{\pair{h_R}{Tb}}^2
  \lesssim\mu(\Omega)
\end{equation*}
for any dyadic systems $\mathcal{D}=\mathcal{D}_n\times\mathcal{D}_m$ and $\mathcal{D}'=\mathcal{D}_n'\times\mathcal{D}_m'$, where $\Omega \subset \R^{n+m}$ is a bounded set such that $\mu(\Omega) < \infty$ and that for every $x \in \Omega$ there exists $S \in \mathcal{D}'$ so that $x \in S \subset \Omega$.  We have also used the following short-hand notation: If $R = I \times J$, then $h_R = h_{I \times J} = h_I \otimes h_J$ is shorthand for $h_{I,\eta} \otimes u_{J,\kappa}$ for $\eta \ne 0$ and $\kappa \ne 0$ (that is, only cancellative Haar functions appear).

We start by setting up some further simplifying notation. For $K\in\mathcal{D}_n'$, let
\begin{equation*}
  \mathcal{C}_K:=\{I\in\mathcal{D}_n\textup{ good}:I\subset K,\operatorname{gen}(I)=\operatorname{gen}(K)+r\}
\end{equation*}
and
\begin{equation*}
  \hat{K}:=\bigcup_{I\in\mathcal{C}_K}I \subset \{x_1\in K:d(x_1,K^c)\geq 2^{-r\gamma_n}\ell(K)\},
\end{equation*}
where the last containment is immediate from the definition of goodness.
We use similar notation $\mathcal{C}_L$ and $\hat{L}$ for $L\in\mathcal{D}_m'$.

We decompose $b=b\chi_{\tilde\Omega}+b\chi_{\tilde\Omega^c}$, where $\tilde\Omega:=\{M_{\mathcal{D}'}\chi_\Omega>\tfrac12\}$ and $M_{\mathcal{D}'}$ is the strong dyadic maximal operator with respect to $\mathcal{D}'$. Then
\begin{equation*}
\begin{split}
  \sum_{S=K\times L\subset\Omega}\sum_{R\in\mathcal{C}_K\times\mathcal{C}_L}\abs{\pair{h_R}{T(b\chi_{\tilde\Omega})}}^2
  &\leq\sum_{R\in\mathcal{D}}\abs{\pair{h_R}{T(b\chi_{\tilde\Omega})}}^2
  \leq\Norm{T(b\chi_{\tilde\Omega})}{2}^2 \\
  &\lesssim\Norm{b\chi_{\tilde\Omega}}{2}^2\lesssim\mu(\tilde\Omega)\lesssim\mu(\Omega),
\end{split}
\end{equation*}
using
\begin{equation*}
  \mu(\tilde\Omega)
  =\mu(M_{\mathcal{D}'}\chi_\Omega>\tfrac12)
  \leq 4\Norm{M_{\mathcal{D}'}\chi_\Omega}{2}^2
  \lesssim\Norm{\chi_\Omega}{2}^2=\mu(\Omega)
\end{equation*}
in the last step.

So it suffices to prove the estimate with $b\chi_{\tilde\Omega^c}$ in place of $b$. For simplicity, we denote it again by $b$, assuming from now on that $\supp b\subset\tilde\Omega^c$.

For every $L\in\mathcal{D}'_m$, let $\mathcal{F}_L$ be the collection of the maximal $F\in\mathcal{D}'_n$ such that $F\times L\subset\tilde\Omega$, and denote $F_L:=\bigcup_{F\in\mathcal{F}_L}F$. If there are any cubes $F\in\mathcal{D}'_n$ with $F\times L\subset\tilde\Omega$, then the maximal cubes exist by standard properties of dyadic cubes, except if there is a strictly increasing sequence of cubes $F_k\in\mathcal{D}'_n$ with $F_k\times L\subset\tilde\Omega$. Since our dyadic systems do not have quadrants, such a sequence will exhaust all of $\R^n$, and then in fact $\R^n\times L\subset\tilde\Omega$. In this case we set $\mathcal{F}_L:=\{\R^n\}$, $F_L:=\R^n$. We refer to this as the \emph{degenerate case}.

These notions lead to the splitting $b=b^1_L+b^2_L$, where
\begin{equation*}
  b^1_L(x_1,x_2):=b(x_1,x_2)\chi_{F_L}(x_1).
\end{equation*}
Note that if $b^1_L(y_1,y_2)\neq 0$, then $y_1\in F_L$ while $(y_1,y_2)\in\tilde\Omega^c\subset(F_L\times L)^c$, and hence $y_2\in L^c$. Thus $\textup{spt}\,b^1_L\subset F_L\times L^c$. Note that $b^1_L=b$, $b^2_L=0$ in the degenerate case.

Now we estimate the left hand side in the Theorem by
\begin{equation*}
%     \sum_{\substack{S\in\mathcal{D}'\\S\subset\Omega}}
%  \sum_{\substack{R\in\mathcal{D}\textup{ good},\ R\subset S\\  \operatorname{gen}(R)=\operatorname{gen}(S)+(r,r)}}
%    \abs{\pair{h_R}{Tb}}^2
%   \lesssim
   \sum_{i=1}^2
        \sum_{\substack{S=K\times L\in\mathcal{D}'\\ S\subset\Omega}}
        \sum_{\substack{I\in\mathcal{C}_K\\ J\in\mathcal{C}_L}}
%    \sum_{\substack{R=I\times J\in\mathcal{D}\textup{ good}, R\subset S\\ \operatorname{gen}(R)=\operatorname{gen}(S)+(r,r)}}
%  \sum_{\substack{I\times J\in\mathcal{D}\textup{ good}, I\times J\subset K\times L \\  \operatorname{gen}(I\times J)=\operatorname{gen}(K\times L)+(r,r)}}
    \abs{\pair{h_{I\times J}}{Tb_L^i}}^2.
\end{equation*}

\subsection{Analysis of $b^1_L$}

We do not need to pay any special attention to the possible degenerate cases in this analysis. The set $F_L$ will appear every now and then, but it can equally well be the full space $\R^n$ or a subset thereof. The only property that we need is that $\mu_n(F_L)<\infty$ whenever this set appears. To see this, observe that $b^1_L$ only appears in the pairing $\ave{h_{I\times J},Tb^1_L}$, where $J\in \mathcal{C}_L$. If $\mu_m(L)=0$, then $h_{I\times J}=0$ for all $J\in\mathcal{C}_L$, and we can ignore such pairings. On the other hand, if $\mu_m(L)>0$, then
\begin{equation*}
  \mu_n(F_L)\mu_m(L)=\mu(F_L\times L)\leq\mu(\tilde\Omega)<\infty
\end{equation*}
implies that $\mu_n(F_L)<\infty$, as claimed.

We begin with:

\begin{lem}
For a fixed $L \in \mathcal{D}_m'$, the sum over the other variables is estimated as
\begin{equation*}
\begin{split}
   \sum_{K\in\mathcal{D}'_n}
    &\sum_{\substack{I\in\mathcal{C}_K\\ J\in\mathcal{C}_L}}
   \abs{\pair{h_{I\times J}}{Tb_L^1}}^2 \\
  & \lesssim \mu_m(\hat{L})\int_{\R^n} \int_{\supp b^1_L(y_1,\cdot)}
   \Big(\frac{\ell(L)}{d(\hat L,y_2)}\Big)^\beta
   \frac{d\mu_m(y_2)}{\lambda_m(x_{\hat L},d(\hat L,y_2))}\,d\mu_n(y_1),
\end{split}
\end{equation*}
where $x_{\hat L}$ is an arbitrarily fixed point in $\hat{L}$.
\end{lem}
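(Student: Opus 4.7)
The plan is to use the partial kernel representation in the $\R^m$ variable, combined with the cancellation of $h_J$, to rewrite $\pair{h_I\otimes h_J}{Tb^1_L}$ as $\pair{h_I\otimes h_J}{\psi}$ for a suitable auxiliary function $\psi$; Bessel's inequality then reduces the square sum to $\|\psi\|_{L^2(\mu)}^2$, which we control via Minkowski, the H\"older estimate on $T_2$, and Cauchy--Schwarz with a weight.

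Specifically, since $h_J$ is supported in $J\subset\hat L\subset L$ while $b^1_L(y_1,\cdot)$ is supported in $L^c$ for every $y_1$, the partial kernel representation in $\R^m$ applies, and $\int h_J\,\ud\mu_m=0$ gives
\[
\pair{h_I\otimes h_J}{Tb^1_L}=\int_J h_J(x_2)\int_{L^c}\pair{h_I}{[T_2(x_2,y_2)-T_2(x_{\hat L},y_2)]b^1_L(\cdot,y_2)}_{L^2(\mu_n)}\ud\mu_m(y_2)\ud\mu_m(x_2).
\]
Setting
\[
\psi(x_1,x_2):=\chi_{\hat L}(x_2)\int_{L^c}\bigl\{[T_2(x_2,y_2)-T_2(x_{\hat L},y_2)]b^1_L(\cdot,y_2)\bigr\}(x_1)\ud\mu_m(y_2),
\]
this is just $\pair{h_I\otimes h_J}{\psi}$. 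Since the $h_I\otimes h_J$ with $I\in\mathcal{D}_n$ good and $J\in\mathcal{C}_L$ form an orthonormal family in $L^2(\mu)$, Bessel's inequality yields $\sum_K\sum_{I\in\mathcal{C}_K,\,J\in\mathcal{C}_L}|\pair{h_I\otimes h_J}{Tb^1_L}|^2\le\|\psi\|_{L^2(\mu)}^2$.

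By Minkowski in $y_2$ and the $\bddlin(L^2(\mu_n))$ bound contained in the CZ norm,
\[
\|\psi(\cdot,x_2)\|_{L^2(\mu_n)}\le\int_{L^c}\|T_2(x_2,y_2)-T_2(x_{\hat L},y_2)\|_{\bddlin(L^2(\mu_n))}\|b^1_L(\cdot,y_2)\|_{L^2(\mu_n)}\ud\mu_m(y_2)
\]
for $x_2\in\hat L$. The goodness of every $J\in\mathcal{C}_L$ forces $d(\hat L,L^c)\gtrsim 2^{-r\gamma_m}\ell(L)$, so for $x_2\in\hat L$ and $y_2\in L^c$ we have $|x_2-x_{\hat L}|\le\ell(L)\lesssim d(\hat L,y_2)\sim|x_2-y_2|$ and $\lambda_m(x_2,|x_2-y_2|)\sim\lambda_m(x_{\hat L},d(\hat L,y_2))$ by the symmetry of $\lambda_m$. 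The H\"older estimate on the first argument of $T_2$ (supplemented by the triangle inequality and the size bound in the borderline range where $|x_2-y_2|$ is only comparable to, not much larger than, $|x_2-x_{\hat L}|$) then gives
\[
\|T_2(x_2,y_2)-T_2(x_{\hat L},y_2)\|_{\bddlin(L^2(\mu_n))}\lesssim g(y_2):=\frac{\ell(L)^\beta}{d(\hat L,y_2)^\beta\lambda_m(x_{\hat L},d(\hat L,y_2))}.
\]

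As this bound is independent of $x_2\in\hat L$, integration in $x_2$ contributes the factor $\mu_m(\hat L)$, and Cauchy--Schwarz with weight $g$ splits
\[
\Big(\int_{L^c}g(y_2)\|b^1_L(\cdot,y_2)\|_{L^2(\mu_n)}\ud\mu_m(y_2)\Big)^2\le\Big(\int_{L^c}g\,\ud\mu_m\Big)\int_{L^c}g(y_2)\|b^1_L(\cdot,y_2)\|_{L^2(\mu_n)}^2\ud\mu_m(y_2).
\]
A standard annular decomposition using the doubling of $\lambda_m$ shows $\int_{L^c}g\,\ud\mu_m\lesssim 1$; the normalization $\|b\|_{L^\infty(\mu)}=1$ bounds $\|b^1_L(\cdot,y_2)\|_{L^2(\mu_n)}^2$ by $\mu_n(\supp b^1_L(\cdot,y_2))$, and Fubini converts the remaining integral into the claimed expression. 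The main care is the uniform H\"older-type control of $T_2(x_2,y_2)-T_2(x_{\hat L},y_2)$ on $\hat L\times L^c$, which rests on the (quantitative, but fixed) goodness parameter $r$.
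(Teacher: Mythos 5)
Your argument is correct and follows essentially the same route as the paper's: partial kernel representation in the second variable, cancellation of $h_J$, the H\"older/size estimates for $T_2$, Bessel's inequality, and a weighted Cauchy--Schwarz followed by the standard tail integral. The only (harmless) deviations are that you subtract $T_2(x_{\hat L},y_2)$ rather than $T_2(c_J,y_2)$ --- the paper's choice lets the goodness of $J$ guarantee $|c_J-y_2|\ge 2|x_2-c_J|$ so the H\"older estimate applies outright, whereas yours needs the borderline size-bound patch you correctly flag --- and that you apply Bessel in both variables at once instead of only in $I$ and then summing $\mu_m(J)$ over $J\in\mathcal{C}_L$ to produce $\mu_m(\hat L)$.
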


Note that we have dropped the restriction that $K\times L\subset\Omega$; this part of the estimate is true even if we allow $K$ to range over all dyadic intervals.
\begin{proof}
We can write
\begin{equation*}
\begin{split}
  \pair{h_{I\times J}}{Tb^1_L}
  &=\iint h_J(x_2)\pair{h_I}{T_2(x_2,y_2)b^1_L(\cdot,y_2)}_1\ud\mu_m(x_2)\ud\mu_m(y_2)  \\
  &=\iint h_J(x_2)\pair{h_I}{[T_2(x_2,y_2)-T_2(c_J,y_2)]b^1_L(\cdot,y_2)}_1\ud\mu_m(x_2)\ud\mu_m(y_2),  
\end{split}
\end{equation*}
%\begin{equation*}
%\begin{split}
%  \pair{h_{I\times J}}{Tb^1_L}
%  &=\int \Bpair{h_I}{\int h_J(x_2)T_2(x_2,y_2)b^1_L(\cdot,y_2)\ud\mu_m(x_2)}_1\ud\mu_m(y_2)  \\
%  &=\int \Bpair{h_I}{\int h_J(x_2)[T_2(x_2,y_2)-T_2(x_{\hat L},y_2)]b^1_L(\cdot,y_2)\ud\mu_m(x_2)}_1\ud\mu_m(y_2),  
%\end{split}
%\end{equation*}
where $T_2(x_2,y_2)$ is the $\bddlin(L^2(\mu_n))$-valued kernel of $T$ when interpreted as an operator on $L^2(\mu_m;L^2(\mu_n))$; it acts on the function $b^1_L(\cdot,y_2)\in L^2(\mu_n)$ (the function is bounded and supported on $F_L$), and $\pair{\ }{\ }_1$ designates the inner product in $L^2(\mu_n)$. Hence, abbreviating
\begin{equation*}
   B_J(x_2,y_2):=[T_2(x_2,y_2)-T_2(c_J,y_2)]b^1_L(\cdot,y_2)\in L^2(\mu_n), 
\end{equation*}
we have that
\begin{displaymath}
\pair{h_{I\times J}}{Tb^1_L} = \iint h_J(x_2)\pair{h_I}{B_J(x_2,y_2)}_1\ud\mu_m(x_2)\ud\mu_m(y_2).
\end{displaymath}
Thus, by the orthonormality of the Haar functions $h_I\in L^2(\mu_n)$, we have (with a fixed $J \in \mathcal{C}_L$) that
\begin{align*}
\sum_{\substack{K\in\mathcal{D}_n'\\ I\in\mathcal{C}_K}} \abs{\pair{h_{I\times J}}{Tb^1_L}}^2
&\le \sum_{I\in\mathcal{D}_n} \abs{\pair{h_{I\times J}}{Tb^1_L}}^2 \\
&\le \Big[\iint\abs{h_J(x_2)}\Big(\sum_{I\in\mathcal{D}_n}\abs{\pair{h_I}{B_J(x_2,y_2)}_1}^2\Big)^{1/2}\ud\mu_m(x_2)\ud\mu_m(y_2)\Big]^2 \\
&\le \Big[ \iint |h_J(x_2)| \|B_J(x_2,y_2)\|_{L^2(\mu_n)} \ud\mu_m(x_2)\ud\mu_m(y_2) \Big]^2.
\end{align*}
%&\le \sum_{I\in\mathcal{D}_n} \Big| \Big\langle h_I, \iint  h_J(x_2) B_J(x_2,y_2)(\cdot)\ud\mu_m(x_2)\ud\mu_m(y_2) \Big\rangle_1 \Big|^2 \\
%&\le \Big\| \iint  h_J(x_2) B_J(x_2,y_2)(\cdot)\ud\mu_m(x_2)\ud\mu_m(y_2) \Big\|_{L^2(\mu_n)}^2 \\
%&= \int_{\R^n} \Big| \int_{\R^m \times \R^m} h_J(x_2) B_J(x_2,y_2)(x_1) \, d(\mu_m \times \mu_m)(x_2,y_2)  \Big|^2\,d\mu_n(x_1) \\
%&\le \Big[\int_{\R^m \times \R^m} \Big(\int_{\R^n} |h_J(x_2) B_J(x_2,y_2)(x_1)|^2\,d\mu_n(x_1) \Big)^{1/2} \, d(\mu_m \times \mu_m)(x_2,y_2)  \Big]^2 \\
%&= \Big[ \iint |h_J(x_2)| \|B_J(x_2,y_2)\|_{L^2(\mu_n)} \ud\mu_m(x_2)\ud\mu_m(y_2) \Big]^2,
%\end{align*}
%where the last inequality follows from Minkowski's integral inequality.
%\begin{equation*}
%\begin{split}
%  \sum_{\substack{K\in\mathcal{D}_n'\\ I\in\mathcal{C}_K}}
%   & \abs{\pair{h_{I\times J}}{Tb^1_L}}^2 
%   \leq  \sum_{I\in\mathcal{D}_n}
%     \abs{\pair{h_{I\times J}}{Tb^1_L}}^2  \\
% &\leq\Big[\iint\abs{h_J(x_2)}\Big(\sum_{I\in\mathcal{D}_n}\abs{\pair{h_I}{B_L(x_2,y_2)}_1}^2\Big)^{1/2}\ud\mu_m(x_2)\ud\mu_m(y_2)\Big]^2 \\
%  &\leq\Big[\iint\abs{h_J(x_2)}\Norm{B_L(x_2,y_2)}{L^2(\mu_n)}\ud\mu_m(x_2)\ud\mu_m(y_2)\Big]^2,
%\end{split}
%\end{equation*}

%Since $y_2\in L^c$ and $x_2,x_{\hat L}\subset\hat{L}$, we have
%\begin{equation*}
 % d(x_2,x_{\hat L})\lesssim\ell(L)\lesssim d(x_2,y_2)\sim d(x_{\hat L},y_2)\sim d(\hat L,y_2)
%\end{equation*}

Since $x_2 \in J$, we have $|x_2-c_J| \le \ell(J)/2$. Moreover, since $y_2 \in L^c$, there holds that
\begin{displaymath}
|c_J-y_2| \ge d(J,L^c) \ge \ell(J)^{\gamma_m}\ell(L)^{1-\gamma_m} \ge \ell(J) \ge 2|x_2-c_J|.
\end{displaymath}
Hence the H\"older estimate for $T_2$ gives that
\begin{equation*}
   \Norm{T_2(x_2,y_2)-T_2(c_J,y_2))}{\bddlin(L^2(\mu_n))}\lesssim\Big(\frac{\ell(J)}{|c_J-y_2|}\Big)^\beta\frac{1}{\lambda_m(c_J,|c_J-y_2|)}.
\end{equation*}
It follows that
%\begin{equation*}
%  \Norm{B_L(x_2,y_2)}{L^2(\mu_n)}
%  \lesssim\Big(\frac{\ell(L)}{d(x_2,y_2)}\Big)^\alpha\frac{\Norm{b^1_L(\cdot,y_2)}{L^2(\mu_n)}}{\lambda(x_{\hat L},d(x_2,y_2))},
%\end{equation*}
%and thus
\begin{equation*}
\begin{split}
  \sum_{\substack{K\in\mathcal{D}_n'\\ I\in\mathcal{C}_K\\ J\in\mathcal{C}_L}}
   &\abs{\pair{h_{I\times J}}{Tb^1_L}}^2  \\
   &\lesssim \sum_{J\in\mathcal{C}_L}\Big[\int\abs{h_J(x_2)}\int\Big(\frac{\ell(J)}{|c_J-y_2|}\Big)^\beta
   \frac{\Norm{b^1_L(\cdot,y_2)}{L^2(\mu_n)}}{\lambda_m(c_J,|c_J-y_2|)}\ud\mu_m(y_2)\ud\mu_m(x_2)\Big]^2 \\
   &\lesssim \sum_{J\in\mathcal{C}_L} \mu_m(J)
   \Big[ \int\Big(\frac{\ell(J)}{|c_J-y_2|}\Big)^\beta
   \frac{\Norm{b^1_L(\cdot,y_2)}{L^2(\mu_n)}}{\lambda_m(c_J,|c_J-y_2|)}\ud\mu_m(y_2) \Big]^2.
\end{split}
\end{equation*}
By Cauchy--Schwarz, the last brackets squared are dominated by
\begin{align*}
\Big[ \int_{J^c} \Big(&\frac{\ell(J)}{|c_J-y_2|}\Big)^\beta
   \frac{1}{\lambda_m(c_J,|c_J-y_2|)}\ud\mu_m(y_2)\Big] \\ &\times \Big[ \int \Big(\frac{\ell(J)}{|c_J-y_2|}\Big)^\beta
   \frac{\Norm{b^1_L(\cdot,y_2)}{L^2(\mu_n)}^2}{\lambda_m(c_J,|c_J-y_2|)}\ud\mu_m(y_2)\Big].
\end{align*}
It is a standard fact that
\begin{displaymath}
\int_{J^c} \Big(\frac{\ell(J)}{|c_J-y_2|}\Big)^\beta \frac{1}{\lambda_m(c_J,|c_J-y_2|)}\ud\mu_m(y_2) \lesssim 1.
\end{displaymath}

To estimate the other bracket we need a few observations.
Since $c_J \in J \in \mathcal{C}_L$, we have that $c_J \in \hat L$ and so $|c_J - y_2| \ge d(\hat L, y_2)$. So we may estimate
\begin{displaymath}
\Big(\frac{\ell(J)}{|c_J-y_2|}\Big)^\beta \frac{1}{\lambda_m(c_J,|c_J-y_2|)} \le \Big(\frac{\ell(L)}{d(\hat L, y_2)}\Big)^\beta \frac{1}{\lambda_m(c_J,d(\hat L, y_2))}.
\end{displaymath}
We still want to get rid of the last dependence on $J$, the $c_J$ on $\lambda_m(c_J,d(\hat L, y_2))$. To this end, let $x_{\hat L}$ be an arbitrarily fixed point in $\hat{L}$ as in the statement of the Lemma.
Since $c_J, x_{\hat L} \in L$, we have $|c_J - x_{\hat L}| \le \ell(L)$. Moreover, we have $d(\hat L, y_2) \ge d(\hat L, L^c) \ge 2^{-r\gamma_m} \ell(L) \ge 2^{-r\gamma_m}|c_J - x_{\hat L}|$. Therefore, we can estimate that
\begin{displaymath}
\lambda_m(c_J,d(\hat L, y_2)) \sim \lambda_m(c_J, 2^{r\gamma_m}d(\hat L, y_2)) \sim \lambda_m(x_{\hat L}, 2^{r\gamma_m}d(\hat L, y_2)) \sim \lambda_m(x_{\hat L}, d(\hat L, y_2)).
\end{displaymath}
So for the other bracket we get the bound
\begin{align*}
\int \Big(&\frac{\ell(J)}{|c_J-y_2|}\Big)^\beta \frac{\Norm{b^1_L(\cdot,y_2)}{L^2(\mu_n)}^2}{\lambda_m(c_J,|c_J-y_2|)}\ud\mu_m(y_2) \\
&\lesssim \int \Big(\frac{\ell(L)}{d(\hat L, y_2)}\Big)^\beta \frac{\Norm{b^1_L(\cdot,y_2)}{L^2(\mu_n)}^2}{\lambda_m(x_{\hat L},d(\hat L, y_2))}\ud\mu_m(y_2) \\
&\lesssim \int_{\R^n} \int_{\supp b^1_L(y_1,\cdot)} \Big(\frac{\ell(L)}{d(\hat L,y_2)}\Big)^\beta \frac{d\mu_m(y_2)}{\lambda_m(x_{\hat L},d(\hat L,y_2))}\,d\mu_n(y_1).
\end{align*}
%\begin{equation*}
%\begin{split}
% \Big[\int_{J^c} &\Big(\frac{\ell(L)}{d(\hat L,y_2)}\Big)^\alpha
%  \frac{1}{\lambda(x_{\hat L},d(\hat L,y_2))}\ud\mu_m(y_2)\Big]  \\
% &\qquad\times\Big[\int \Big(\frac{\ell(L)}{d(\hat L,y_2)}\Big)^\alpha
%   \frac{\Norm{b^1_L(\cdot,y_2)}{L^2(\mu_n)}^2}{\lambda(x_{\hat L},d(\hat L,y_2))}\ud\mu_m(y_2)\Big] \\
%  &\lesssim 1\times \Big[\iint_{\supp b^1_L}
%  \Big(\frac{\ell(L)}{d(x_2,y_2)}\Big)^\alpha
%  \frac{\ud\mu(y_1,y_2)}{\lambda(x_{\hat L},d(\hat L,y_2))}\Big].
%\end{split}
%\end{equation*}
Since these bounds do not depend on $J$, the proof is completed by observing that
\begin{equation*}
  \sum_{J\in\mathcal{C}_L}\mu_m(J)=\mu_m(\hat L).\qedhere
\end{equation*}
\end{proof}

It remains to sum the bound of the Lemma over all $L\in\mathcal{D}_m'$. (Note that $F_L=\varnothing=\supp b^1_L$ if there is no dyadic interval $F$ such that $F\times L\subset\tilde\Omega$.) Thus we need to estimate
\begin{equation}\label{eq:b1Final}
\begin{split}
  \sum_{\substack{K\times L\in\mathcal{D}'\\
  K\times L\subset\Omega}} &\sum_{\substack{I\in\mathcal{C}_K\\ J\in\mathcal{C}_L}}
    \abs{\pair{h_{I\times J}}{Tb^1_L}}^2\\
  &  \lesssim\sum_{L\in\mathcal{D}_m'} \mu_m(\hat{L})\int_{\R^n} \int_{\supp b^1_L(y_1,\cdot)}
   \Big(\frac{\ell(L)}{d(\hat L,y_2)}\Big)^\beta
   \frac{d\mu_m(y_2)}{\lambda_m(x_{\hat L},d(\hat L,y_2))}\,d\mu_n(y_1) \\
  &  =\iint \sum_{L:\hat{L}\ni x_2}\int_{\supp b^1_L(y_1,\cdot)}\Big(\frac{\ell(L)}{d(\hat L,y_2)}\Big)^\beta
  \frac{\ud\mu_m(y_2)}{\lambda_m(x_{\hat L},d(\hat L,y_2))}\ud\mu(y_1,x_2),
\end{split}
\end{equation}
where the last equality follows by writing $\mu_m(\hat L)=\int \chi_{\hat L}(x_2)\ud\mu_m(x_2)$ and reorganizing.

Note that the outermost double integral may be restricted to $\tilde\Omega$. Indeed, let $x_2\in L$ and $\supp b^1_L(y_1,\cdot)$ be nontrivial. Then necessarily $y_1\in F_L$, and thus $(y_1,x_2)\in F_L\times L\subset\tilde\Omega$.

For each fixed $(y_1,x_2)\in\tilde\Omega$, let $T=T(y_1,x_2)$ denote the maximal $L\in\mathcal{D}_m'$ appearing in \eqref{eq:b1Final} with the property that $x_2\in \hat L$ and $\supp b^1_L(y_1,\cdot)\neq\varnothing$, so in particular $y_1\in F_L$. The existence of these maximal cubes $L$ can be seen as follows: Since $\Omega$ is bounded, we can obviously add the implicit restriction $\ell(L)\leq\textup{diam}(\Omega)$ to the summation on the first line, and then all other lines, in \eqref{eq:b1Final}. Thus the size of the relevant cubes $L$ is bounded from above, and then the existence of maximal cubes is a standard property of dyadic grids.

By definition, we have $F_T\times T\subset\tilde\Omega$ and $y_1\in F_T$. So in particular $\{y_1\}\times T\subset\tilde\Omega$. Since $\supp b^1_L\subset\tilde\Omega^c$, it follows that $\supp b^1_L(y_1,\cdot)\subset T^c$, so any $y_2$ in the integral satisfies $y_2\in\supp b^1_L(y_1,\cdot)\subset T^c$.

If  $z \in \hat L$, then there is a good cube $J \in \mathcal{D}_{m}$ so that $z \in J \subset L \subset T$ and $\ell(J) = 2^{-r}\ell(L) \le 2^{-r}\ell(T)$.
The goodness implies that
\begin{displaymath}
|z- y_2| \ge d(J, T^c) \ge \ell(J)^{\gamma_m}\ell(T)^{1-\gamma_m} = 2^{-r \gamma_m} \ell(L)^{\gamma_m}\ell(T)^{1-\gamma_m}.
\end{displaymath}
We may conclude that every $y_2$ in the integral satisfies the bound $d(\hat L, y_2) \ge 2^{-r \gamma_m} \ell(L)^{\gamma_m}\ell(T)^{1-\gamma_m}$.
So we may estimate
\begin{align*}
&\int_{\supp b^1_L(y_1,\cdot)}\frac{d(\hat L, y_2)^{-\beta}d\mu_m(y_2)}{\lambda_m(x_{\hat L},d(\hat L,y_2))}\\
&\le \sum_{j=0}^{\infty} \int_{\{y_2:\,  2^j2^{-r \gamma_m} \ell(L)^{\gamma_m}\ell(T)^{1-\gamma_m}\le d(\hat L, y_2) < 2^{j+1} 2^{-r \gamma_m} \ell(L)^{\gamma_m}\ell(T)^{1-\gamma_m}\}} 
\frac{d(\hat L, y_2)^{-\beta}d\mu_m(y_2)}{\lambda_m(x_{\hat L},d(\hat L,y_2))} \\
&\lesssim \sum_{j=0}^{\infty} (2^j2^{-r \gamma_m} \ell(L)^{\gamma_m}\ell(T)^{1-\gamma_m})^{-\beta} 
\frac{\mu_m(B(x_{\hat L}, [2+2^{r\gamma_m}] \cdot 2^j2^{-r \gamma_m} \ell(L)^{\gamma_m}\ell(T)^{1-\gamma_m}))}{\lambda_m(x_{\hat L},2^j2^{-r \gamma_m} \ell(L)^{\gamma_m}\ell(T)^{1-\gamma_m})} \\
&\lesssim \ell(L)^{-\beta \gamma_m} \ell(T)^{-\beta(1-\gamma_m)}.
\end{align*}
This gives us the bound
\begin{align*}
\sum_{L:\hat{L}\ni x_2}\int_{\supp b^1_L(y_1,\cdot)}\Big(\frac{\ell(L)}{d(\hat L,y_2)}\Big)^\beta \frac{\ud\mu_m(y_2)}{\lambda_m(x_{\hat L},d(\hat L,y_2))} 
&\lesssim \sum_{L:\,x_2\in L\subset T} \Big(\frac{\ell(L)}{\ell(T)}\Big)^{\beta(1-\gamma_m)} \\
&= \sum_{j=0}^{\infty} 2^{-\beta(1-\gamma_m)j} \lesssim 1.
\end{align*}

%These observations lead to the bound
%\begin{equation*}
%\begin{split}
%  \sum_{L:\hat{L}\ni x_2} &\int_{\supp b^1_L(y_1,\cdot)}\Big(\frac{\ell(L)}{d(\hat L,y_2)}\Big)^\alpha
%  \frac{\ud\mu_m(y_2)}{\lambda(x_{\hat L},d(\hat L,y_2))} \\
%  &\lesssim\sum_{L:x_2\in L\subset T}\int_{T^c}\Big(\frac{\ell(L)}{d(\hat L,y_2)}\Big)^\alpha
%  \frac{\ud\mu_m(y_2)}{\lambda(c_T,d(c_T,y_2))} \\
 % &\lesssim\sum_{L:x_2\in L\subset T}\Big(\frac{\ell(L)}{d(\hat L,T^c)}\Big)^\alpha.
%  \lesssim\sum_{L:x_2\in L\subset T}\Big(\frac{\ell(L)}{\ell(T)}\Big)^\alpha\lesssim 1,
%\end{split}
%\end{equation*}
%To estimate this last series, we observe that $\hat{L}=\bigcup_{J\in\mathcal{C}_L}J\subset L\subset T$ is a union of good cubes inside a big cube $T$; hence, buy definition of ``good'', we obtain
%\begin{equation*}
%  d(\hat L,T^c)
%  =\min_{J\in\mathcal{C}_L}d(J,T^c)
%  \geq\min_{J\in\mathcal{C}_L}\ell(J)^\gamma\ell(T)^{1-\gamma}
%  =2^{-r\gamma}\ell(L)^\gamma\ell(T)^{1-\gamma}.
%\end{equation*}
%Thus
%\begin{equation*}
%  \sum_{L:x_2\in L\subset T}\Big(\frac{\ell(L)}{d(\hat L,T^c)}\Big)^\alpha
%  \lesssim\sum_{L:x_2\in L\subset T}\Big(\frac{\ell(L)}{\ell(T)}\Big)^{\alpha(1-\gamma)}
%  =\sum_{k=0}^\infty \delta^{\alpha(1-\gamma)k}\lesssim 1,
%\end{equation*}
%where $\delta\in(0,1)$ ($\delta=\tfrac12$ in the Euclidean case) is the scale separation between consecutive generations of the dyadic system $\mathcal{D}_m'$; the last step is just the sum of a geometric series.

Substituting back to \eqref{eq:b1Final} and recalling that the outer double integral is restricted to $\tilde\Omega$, we deduce that
\begin{equation*}
  \sum_{K\times L\subset\Omega}
 \sum_{\substack{I\in\mathcal{C}_K\\ J\in\mathcal{C}_L}} \abs{\pair{h_{I\times J}}{Tb^1_L}}^2
  \lesssim\iint_{\tilde\Omega}\ud\mu(y_1,x_2)
  =\mu(\tilde\Omega)\lesssim\mu(\Omega),
\end{equation*}
as required.

\subsection{Analysis of $b^2_L$}
Here we sum the multiple series in a different order:
\begin{lem}
For a fixed $K \in \mathcal{D}_n'$, the sum over the other variables is estimated as
\begin{equation*}
  \sum_{\substack{L\in\mathcal{D}_m'\\ K\times L\subset\Omega}}
  \sum_{\substack{I\in\mathcal{C}_K\\ J\in\mathcal{C}_L}}
    \abs{\pair{h_{I\times J}}{Tb^2_L}}^2
   \lesssim\sum_{G\in\mathcal{G}_K}\mu(K\times G)\Big(\frac{\ell(K)}{\ell(\tilde{K}_G)}\Big)^{\alpha(1-\gamma_n)},
\end{equation*}
where $\mathcal{G}_K$ consists of the maximal $G\in\mathcal{D}_m'$ such that $K\times G\subset\Omega$, and $\tilde{K}_G\in\mathcal{D}_n'$ is the maximal cube such that $\tilde{K}_G\supset K$ and $\tilde{K}_G\times G\subset\tilde\Omega$.
\end{lem}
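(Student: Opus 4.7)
The plan is to mirror the $b^1_L$ analysis but with the roles of the first and second variable swapped, now exploiting H\"older regularity of the $\bddlin(L^2(\mu_m))$-valued kernel $T_1(x_1,y_1)$ in the variable $x_1$ and the geometric fact that $\tilde K_G\subset F_L$ whenever $L\subset G\in\mathcal{G}_K$.

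First, I would observe that each $L$ with $K\times L\subset\Omega$ lies in a unique $G\in\mathcal{G}_K$ (by maximality), so I partition the $L$-sum over $G$. For any $L\subset G$, the inclusion $\tilde K_G\times L\subset\tilde K_G\times G\subset\tilde\Omega$ places $\tilde K_G$ inside some element of $\mathcal{F}_L$, hence $\tilde K_G\subset F_L$ and $\supp b^2_L(\cdot,y_2)\subset F_L^c\subset\tilde K_G^c$ for every $y_2$. This produces the required separation in $\R^n$ between $I\subset K\subset\tilde K_G$ and $\supp b^2_L(\cdot,y_2)$.

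Second, the kernel representation (valid by disjoint supports in $\R^n$) together with $\int h_I\ud\mu_n=0$ gives
\begin{equation*}
  \pair{h_{I\times J}}{Tb^2_L}
  =\iint h_I(x_1)\pair{h_J}{[T_1(x_1,y_1)-T_1(c_I,y_1)]b^2_L(y_1,\cdot)}_2\ud\mu_n(x_1)\ud\mu_n(y_1).
\end{equation*}
Goodness of $I\in\mathcal{C}_K$ combined with $\ell(\tilde K_G)\geq\ell(K)=2^r\ell(I)$ yields $d(I,\tilde K_G^c)>4\ell(I)^{\gamma_n}\ell(\tilde K_G)^{1-\gamma_n}\geq 2|x_1-c_I|$, so the H\"older estimate $\|T_1(x_1,y_1)-T_1(c_I,y_1)\|_{CZ(\mu_m)}\lesssim(\ell(I)/|c_I-y_1|)^\alpha/\lambda_n(c_I,|c_I-y_1|)$ applies. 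The standard dyadic bound then gives
\begin{equation*}
  \int_{\tilde K_G^c}\Big(\frac{\ell(I)}{|c_I-y_1|}\Big)^\alpha\frac{\ud\mu_n(y_1)}{\lambda_n(c_I,|c_I-y_1|)}\lesssim\Big(\frac{\ell(K)}{\ell(\tilde K_G)}\Big)^{\alpha(1-\gamma_n)},
\end{equation*}
which is the source of the advertised decay factor.

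Third, to avoid the apparent divergence of $\sum_{L\subset G}\mu_m(L)$, I decompose $b^2_L=b^3_G-b\chi_{F_L\cap\tilde K_G^c}$, where $b^3_G:=b\chi_{\tilde K_G^c}$ is independent of $L$ and the residual $b\chi_{F_L\cap\tilde K_G^c}$ is supported in $(F_L\setminus\tilde K_G)\times L^c$ (the $L^c$-containment in the second variable uses $\supp b\subset\tilde\Omega^c$ and $F_L\times L\subset\tilde\Omega$). For the $Tb^3_G$ contribution, since $b^3_G$ does not depend on $L$, the collection $\bigcup_{L\subset G}\mathcal{C}_L$ reorganises into a single family of good $J\in\mathcal{D}_m$ contained in $G$, and one application of Haar orthonormality in $L^2(\mu_m)$ supplies the $\mu_m(G)^{1/2}$ factor (using $|b^3_G|\leq 1$). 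Combined with Cauchy--Schwarz in $x_1$, summation $\sum_{I\in\mathcal{C}_K}\mu_n(I)\leq\mu_n(K)$, and the decay factor above, this yields the contribution $\mu(K\times G)(\ell(K)/\ell(\tilde K_G))^{\alpha(1-\gamma_n)}$. For the residual $T[b\chi_{F_L\cap\tilde K_G^c}]$, the supports are fully disjoint from $h_{I\times J}$ in both $\R^n$ and $\R^m$, so the bi-parameter H\"older estimate (zero mean of both $h_I$ and $h_J$, using the scalar kernel of $T_1$ on $\R^m$) yields a pointwise bound with decay in both variables. Squaring, summing over $J\in\mathcal{C}_L$ with $\sum_J\mu_m(J)\leq\mu_m(L)$, and reorganising the sum over $L\subset G$ via the disjointness of the $J\in\mathcal{C}_L$ across different $L$ at the same scale, combined with geometric summation across scales, gives a contribution dominated by the same right-hand side.

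The main obstacle is precisely the $L$-dependence of $b^2_L$ and the apparent need for $\sum_{L\subset G}\mu_m(L)$, which would diverge for a naive per-$L$ estimate. The decomposition $b^2_L=b^3_G-(\text{residual})$ is what resolves this: the $L$-independent piece $b^3_G$ allows a single application of orthonormality, while the residual has full bi-parameter disjointness and thus enjoys enough extra decay to be summed by hand. Summing over $G\in\mathcal{G}_K$ then completes the proof.
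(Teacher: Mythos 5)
Your overall skeleton agrees with the paper's: partition the $L$-sum over $G\in\mathcal{G}_K$, use $\tilde K_G\subset F_L$ to obtain separation in $\R^n$, expand via the partial kernel representation, subtract $T_1(c_I,y_1)$ using the zero mean of $h_I$, and extract the decay $(\ell(K)/\ell(\tilde K_G))^{\alpha(1-\gamma_n)}$ from the $y_1$-integral over $\tilde K_G^c$. But your treatment of the $L$-dependence has two genuine gaps. First, for the $b^3_G$ piece, ``Haar orthonormality in $L^2(\mu_m)$'' cannot supply the factor $\mu_m(G)^{1/2}$: orthonormality bounds $\sum_J|\pair{h_J}{\phi}|^2$ by $\|\phi\|_{L^2(\mu_m)}^2$, and here $\phi=[T_1(x_1,y_1)-T_1(c_I,y_1)]b^3_G(y_1,\cdot)$ is a Calder\'on--Zygmund operator applied to an $L^\infty(\mu_m)$ function of unbounded support, so its $L^2(\mu_m)$ norm is neither finite in general nor comparable to $\mu_m(G)^{1/2}$. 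What is needed is that this operator maps $L^\infty(\mu_m)$ into $\BMO^2_3(\mu_m)$ with norm controlled by the H\"older bound on $\|T_1(x_1,y_1)-T_1(c_I,y_1)\|_{CZ(\mu_m)}$, followed by the Carleson-type square-sum estimate of Lemma~\ref{lem:BMO1} over the good cubes $J\subset G$ with $\operatorname{gen}(J)\geq\operatorname{gen}(G)+r$ (which is exactly $\bigcup_{L\subset G}\mathcal{C}_L$); that is the only route to the bound $\mu_m(G)\|\cdot\|_{\BMO_3^2(\mu_m)}^2$. Second, the residual piece $b\chi_{F_L\setminus\tilde K_G}$ cannot be summed as you describe: the bi-parameter H\"older estimate gives second-variable decay only of order $(\ell(J)/d(J,L^c))^{\beta}\lesssim 2^{-r(1-\gamma_m)\beta}$, a constant independent of $\ell(L)/\ell(G)$, so after squaring and summing over $J\in\mathcal{C}_L$ each scale $\ell(L)=2^{-k}\ell(G)$ contributes on the order of $\mu_m(G)$ and the sum over $k$ diverges; there is no ``geometric summation across scales''.

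The decomposition is also unnecessary, and avoiding it removes both problems at once. Since $b^2_L(y_1,\cdot)=\chi_{F_L^c}(y_1)\,b(y_1,\cdot)$, the indicator acts only in the first variable and factors out of the operator acting in the second variable, so $B_I(x_1,y_1):=[T_1(x_1,y_1)-T_1(c_I,y_1)]b(y_1,\cdot)$ is already $L$-independent. One then majorizes the scalar factor $\chi_{F_L^c}(y_1)\le\chi_{\tilde K_G^c}(y_1)$, applies Minkowski's inequality in $(x_1,y_1)$ against the $\ell^2$-norm over $(G,L,J)$, and controls the entire inner square sum $\sum_{L\subset G}\sum_{J\in\mathcal{C}_L}|\pair{h_J}{B_I(x_1,y_1)}_2|^2$ by $\mu_m(G)\|B_I(x_1,y_1)\|_{\BMO_3^2(\mu_m)}^2$ via Lemma~\ref{lem:BMO1}, with no residual term to estimate.
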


Before going to the proof, let us comment on the existence of the maximal cubes in the statement of the Lemma. Since $\Omega$ is bounded, there is an upper bound for $\ell(G)$ such that $K\times G\subset\Omega$, so $\mathcal{G}_K$ is immediately well defined. As for $\tilde{K}_G$, there again arises the possibility of an infinite increasing sequence of $K_k\supset K$ with $K_k\times G\subset\tilde\Omega$. As before, such a sequence will exhaust $\R^n$, thus $\R^n\times G\subset\tilde\Omega$, and accordingly we interpret $\tilde{K}_G:=\R^n$, $\ell(\tilde{K}_G):=\infty$ in this case. Note that the corresponding terms then vanish on the right of the asserted estimate.

\begin{proof}
Clearly every $L$ as in the left is contained in a unique $G\in\mathcal{G}_K$; thus
\begin{equation*}
  \sum_{\substack{L\in\mathcal{D}_m'\\ K\times L\subset\Omega}}
  \sum_{\substack{I\in\mathcal{C}_K\\ J\in\mathcal{C}_L}}
    \abs{\pair{h_{I\times J}}{Tb^2_L}}^2
  =   \sum_{I\in\mathcal{C}_K} \sum_{G\in\mathcal{G}_K} \sum_{\substack{L\in\mathcal{D}_m'\\ L\subset G}}\sum_{J\in\mathcal{C}_L}
    \abs{\pair{h_{I\times J}}{Tb^2_L}}^2.
\end{equation*}
In this sum, we observe that $\tilde{K}_G\times L\subset\tilde{K}_G\times G\subset\tilde\Omega$, and hence by definition $\tilde{K}_G\subset F_L$.
Now, for every $G\in\mathcal{G}_K$, there are two possibilities:
\begin{itemize}
  \item There is at least one $L\subset G$ such that $F_L\neq\R^n$. Then $\tilde{K}_G\subset F_L$ is a proper dyadic cube.
  \item For all $L\subset G$, we have $F_L=\R^n$. But then for all these $L$, we are in the degenerate case with $b^2_L\equiv 0$. Hence the part of the sum corresponding to these cubes $G$ will vanish, and can be ignored.
\end{itemize}
Thus, we can restrict the summation to the cubes $G\in\mathcal{G}_K$ for which $\tilde{K}_G\in\mathcal{D}_n'$ is a proper dyadic cube.

Now, for summation variables as above, we have that $I\subset\hat{K} \subset K \subset \tilde{K}_G$
and $F_L^c\supset\tilde{K}_G^c$ are disjoint, and we may use the partial kernel representation
\begin{equation*}
\begin{split}
  \pair{h_{I\times J}}{Tb^2_L}
  &=\iint h_I(x_1)\pair{h_J}{T_1(x_1,y_1)b(y_1,\cdot)}_2 \chi_{F_L^c}(y_1)\ud\mu_n(x_1)\ud\mu_n(y_1) \\
  &=\iint h_I(x_1)\pair{h_J}{B_I(x_1,y_1)}_2 \chi_{F_L^c}(y_1)\ud\mu_n(x_1)\ud\mu_n(y_1),
\end{split}
\end{equation*}
where
\begin{equation*}
  B_I(x_1,y_1)
  :=[T_1(x_1,y_1)-T_1(c_I,y_1)]b(y_1,\cdot).
\end{equation*}
We have
\begin{displaymath}
|c_I - y_1| \ge d(I, F_L^c) \ge d(I, K^c) \ge \ell(I)^{\gamma_n}\ell(K)^{1-\gamma_n} \ge \ell(I) \ge 2|x_1-c_I|,
\end{displaymath}
and therefore we may estimate
\begin{equation*}
  \Norm{T_1(x_1,y_1)-T_1(c_I,y_1)}{CZ(\mu_m)}\lesssim \Big(\frac{\ell(I)}{|c_I-y_1|}\Big)^\alpha\frac{1}{\lambda_n(c_I,|c_I-y_1|)}.
\end{equation*}
Since $b(y_1,\cdot)\in L^\infty(\mu_m)$ is mapped into $\BMO_3^2(\mu_m)$ by the Calder\'on--Zygmund operator above, we also have
\begin{align*}
  \Norm{B_I(x_1,y_1)}{\BMO_3^2(\mu_m)} &\lesssim \Norm{T_1(x_1,y_1)-T_1(c_I,y_1)}{CZ(\mu_m)} \\
  &\lesssim \Big(\frac{\ell(I)}{|c_I-y_1|}\Big)^\alpha\frac{1}{\lambda_n(c_I,|c_I-y_1|)}.
\end{align*}

Estimating $\chi_{F_L^c}(y_1)\leq \chi_{\tilde K_G^c}(y_1)$, we then deduce that with a fixed $I \in \mathcal{C}_K$ there holds
\begin{align*}
  \sum_{G\in\mathcal{G}_K}& \sum_{\substack{L\subset G\\ J\in\mathcal{C}_L}}
    \abs{\pair{h_{I\times J}}{Tb^2_L}}^2 \\
   &\leq\Big(\iint \abs{h_I(x_1)}\Big[ \sum_{G\in\mathcal{G}_K} \chi_{\tilde K_G^c}(y_1) \sum_{\substack{L\subset G\\ J\in\mathcal{C}_L}}
       \abs{\pair{h_J}{B_I(x_1,y_1)}_2}^2\Big]^{1/2} \ud\mu_n(x_1)\ud\mu_n(y_1)\Big)^2.
%    &\lesssim\Big(\iint \abs{h_I(x_1)}\Big[ \sum_{G\in\mathcal{G}_K} \Norm{B(x_1,y_1)}{\BMO^2_\lambda(\mu_m)}^2\mu_m(G)
%        \chi_{\tilde K_G^c}(y_1)\Big]^{1/2} \ud\mu_n(x_1)\ud\mu_n(y_1)\Big)^2 \\
%    &\lesssim\Big(\iint \abs{h_I(x_1)}\Big(\frac{\ell(K)}{d(y_1,\hat{K})}\Big)^\alpha
%      \Big[\sum_{G\in\mathcal{G}_K} \mu_m(G) \chi_{\tilde K_G^c}(y_1)\Big]^{1/2} \frac{\ud\mu_n(x_1)\ud\mu_n(y_1)}{\lambda(x_{\hat K},d(y_1,\hat K))}\Big)^2.
\end{align*}
We then note that
\begin{align*}
 \sum_{\substack{L\subset G\\ J\in\mathcal{C}_L}} \abs{\pair{h_J}{B_I(x_1,y_1)}_2}^2
 &\le \mathop{\mathop{\sum_{J \textup{ good}}}_{J \subset G}}_{\ell(J) \le 2^{-r}\ell(G)} \abs{\pair{h_J}{B_I(x_1,y_1)}_2}^2 \\ 
 &\lesssim \mu_m(G) \|B_I(x_1,y_1)\|_{\BMO_3^2(\mu_m)}^2
\end{align*}
by Lemma \ref{lem:BMO1}. This gives us that
\begin{align*}
&\sum_{G\in\mathcal{G}_K} \sum_{\substack{L\subset G\\ J\in\mathcal{C}_L}} \abs{\pair{h_{I\times J}}{Tb^2_L}}^2 \\
&\lesssim \Big(\iint \abs{h_I(x_1)} \|B_I(x_1,y_1)\|_{\BMO_3^2(\mu_m)} \Big[ \sum_{G\in\mathcal{G}_K} \mu_m(G) \chi_{\tilde K_G^c}(y_1)
\Big]^{1/2} \ud\mu_n(x_1)\ud\mu_n(y_1)\Big)^2 \\
&\lesssim \mu_n(I) \Big( \int \Big(\frac{\ell(I)}{|c_I-y_1|}\Big)^\alpha\frac{1}{\lambda_n(c_I,|c_I-y_1|)}
\Big[ \sum_{G\in\mathcal{G}_K} \mu_m(G) \chi_{\tilde K_G^c}(y_1) \Big]^{1/2} \ud\mu_n(y_1)\Big)^2.
\end{align*}
The integral squared is estimated by
\begin{align*}
\Big( \int_{I^c} &\Big(\frac{\ell(I)}{|c_I-y_1|}\Big)^\alpha\frac{\ud\mu_n(y_1)}{\lambda_n(c_I,|c_I-y_1|)} \Big)  \\
&\times \Big( \int \Big(\frac{\ell(I)}{|c_I-y_1|}\Big)^\alpha\frac{1}{\lambda_n(c_I,|c_I-y_1|)}  \sum_{G\in\mathcal{G}_K} \mu_m(G) \chi_{\tilde K_G^c}(y_1)  \ud\mu_n(y_1) \Big),
\end{align*}
which is further dominated by
\begin{align*}
\sum_{G\in\mathcal{G}_K} &\mu_m(G) \int_{\tilde K_G^c} \Big(\frac{\ell(I)}{|c_I-y_1|}\Big)^\alpha\frac{\ud\mu_n(y_1)}{\lambda_n(c_I,|c_I-y_1|)} \\
&\lesssim \sum_{G\in\mathcal{G}_K} \mu_m(G) \int_{y_1:\, d(\hat K, y_1) \ge 2^{-r\gamma_n}\ell(K)^{\gamma_n}\ell(\tilde K_G)^{1-\gamma_n}} 
\Big(\frac{\ell(K)}{d(\hat K, y_1)}\Big)^\alpha\frac{\ud\mu_n(y_1)}{\lambda_n(x_{\hat K},d(\hat K, y_1))}  \\
&\lesssim \sum_{G\in\mathcal{G}_K} \mu_m(G) \Big(\frac{\ell(K)}{\ell(\tilde K_G)}\Big)^{\alpha(1-\gamma_n)}.
\end{align*}
These estimates follow completely analogously to the ones in the previous subsection and utilize goodness in an essential way.

We have established the bound
\begin{align*}
 \sum_{I\in\mathcal{C}_K} \sum_{G\in\mathcal{G}_K} \sum_{\substack{L\subset G\\ J\in\mathcal{C}_L}} \abs{\pair{h_{I\times J}}{Tb^2_L}}^2
&\lesssim  \sum_{I\in\mathcal{C}_K} \mu_n(I) \sum_{G\in\mathcal{G}_K} \mu_m(G) \Big(\frac{\ell(K)}{\ell(\tilde K_G)}\Big)^{\alpha(1-\gamma_n)} \\
&= \mu_n(\hat K) \sum_{G\in\mathcal{G}_K} \mu_m(G) \Big(\frac{\ell(K)}{\ell(\tilde K_G)}\Big)^{\alpha(1-\gamma_n)} \\
&\le \sum_{G\in\mathcal{G}_K} \mu(K \times G) \Big(\frac{\ell(K)}{\ell(\tilde K_G)}\Big)^{\alpha(1-\gamma_n)},
\end{align*}
which ends the proof of the lemma.
%Estimating $\int\abs{h_I(x_1)}\ud\mu_n(x_1)\lesssim\mu_n(I)^{1/2}$ and using $\tilde{K}_G^c\subset K^c$ and Cauchy--Schwarz, we continue the estimate
%\begin{equation*}
%\begin{split}
%    &\lesssim\mu_n(I)\times\Big(\int_{K^c} \Big(\frac{\ell(K)}{d(y_1,\hat{K})}\Big)^\alpha\frac{\ud\mu_n(y_1)}{\lambda(x_{\hat K},d(y_1,\hat K))}\Big) \\
%    &\qquad\times  \Big(\int \Big(\frac{\ell(K)}{d(y_1,\hat{K})}\Big)^\alpha
%      \sum_{G\in\mathcal{G}_K} \mu_m(G) \chi_{\tilde K_G^c}(y_1) \frac{\ud\mu_n(y_1)}{\lambda(x_{\hat K},d(y_1,\hat K))}\Big),
%\end{split}
%\end{equation*}
%where the middle factor is dominated by a constant.
%Moreover, we have
%\begin{equation*}
%  \int_{\tilde{K}_G^c}\Big(\frac{\ell(K)}{d(y_1,\hat{K})}\Big)^\alpha\frac{\ud\mu_n(y_1)}{\lambda(x_{\hat K},d(y_1,\hat K))}
%  \lesssim\Big(\frac{\ell(K)}{d(\tilde{K}_G^c,\hat{K})}\Big)^\alpha,
%\end{equation*}
%where $\hat{K}=\bigcup_{I\in\mathcal{C}_K}I$ consists of good intervals contained inside $K\subset\tilde{K}_G^c$. Thus
%\begin{equation*}
%  d(\tilde{K}_G^c,\hat{K})
%  =\min_{I\in\mathcal{C}_K}d(\tilde{K}_G^c,I)
%  \geq\min_I\ell(I)^{\gamma}\ell(\tilde{K}_G^c)^{1-\gamma}
%  =2^{-r\gamma}\ell(K)^\gamma\ell(\tilde{K}_G^c)^{1-\gamma}.
%\end{equation*}
%Thus
%\begin{equation*}
%  \frac{\ell(K)}{d(\tilde{K}_G^c,\hat{K})}\lesssim\Big(\frac{\ell(K)}{\ell(\tilde{K}_G^c)}\Big)^{1-\gamma}.
%\end{equation*}
%It remains to sum
%\begin{equation*}
%  \sum_{I\in\mathcal{C}_K}\mu_n(I)=\mu_n(\hat{K})\leq\mu_n(K)
%\end{equation*}
%to conclude the proof.
\end{proof}

It remains to estimate the right side of the Lemma summed over $K \in \mathcal{D}_n'$:
\begin{align*}
\sum_{K \in \mathcal{D}_n'} \sum_{G\in\mathcal{G}_K} \mu(K \times G) \Big(\frac{\ell(K)}{\ell(\tilde K_G)}\Big)^{\alpha(1-\gamma_n)} 
&\le \mathop{\sum_{K \times G \subset \Omega}}_{\textup{2-maximal}} 2^{-\alpha(1-\gamma_n)\operatorname{emb}_1(K \times G;\Omega)}\mu(K \times G) \\
\le 2 \sum_{k=0}^{\infty} 2^{-\alpha(1-\gamma_n)k} \times \mu(\Omega) \lesssim \mu(\Omega).
\end{align*}
Here we used that by the definition of $\mathcal{G}_K$, the rectangles $K\times G\subset\Omega$ are 2-maximal. 
Second, by the definition of $\tilde K_G$ we have
\begin{equation*}
  \frac{\ell(K)}{\ell(\tilde{K}_G)}= 2^{-\operatorname{emb}_1(K\times G;\Omega)}.
\end{equation*}
Lastly, we utilized Theorem~\ref{thm:Journe}.  This completes the proof of Theorem~\ref{thm:BMO}.
% But this is almost immediate from Theorem~\ref{thm:Journe}. First, by definition of $\mathcal{G}_K$, the rectangles $K\times G\subset\Omega$ are 2-maximal. Second, by definition of $\tilde K_G$,
%\begin{equation*}
%  \frac{\ell(K)}{\ell(\tilde{K}_G)}=\delta^{\operatorname{emb}_1(K\times G;\Omega)},
%\end{equation*}
%where $\delta\in(0,1)$ ($\delta=\tfrac12$ in the Euclidean space) is the separation scale of consecutive levels of the dyadic system $\mathcal{D}_n'$. So indeed
%\begin{equation*}
%  \sum_K\sum_{G\in\mathcal{G}_K}\mu(K\times G)\Big(\frac{\ell(K)}{\ell(\tilde K_G)}\Big)^{\alpha(1-\gamma)}
%  =\sum_{\substack{R\subset\Omega \\ \textup{2-maximal}}}\mu(R)\cdot\delta^{\alpha(1-\gamma)\operatorname{emb}_1(R;\Omega)}
%  \lesssim\mu(\Omega),
%\end{equation*}
%by Theorem~\ref{thm:Journe}, since clearly the function $\omega(k)=\delta^{\alpha(1-\gamma)k}$ is summable over $\N$.
%This completes the proof of Theorem~\ref{thm:BMO}.

\end{document}